\pdfoutput=1
\documentclass[onefignum,onetabnum,reqno]{siamart220329}

\usepackage{stmaryrd}
\usepackage{float}
\usepackage{microtype}
\usepackage{upgreek}
\usepackage{mathtools}
\usepackage[rose]{paper_diening}
\usepackage{mathrsfs}
\usepackage{enumitem}
\usepackage{hyperref}
\usepackage{booktabs}
\setlength{\aboverulesep}{0pt}
\setlength{\belowrulesep}{0pt}
\usepackage{colortbl}
\usepackage{tabulary}
\usepackage{diagbox}
\usepackage{caption}

\newsiamremark{assumption}{Assumption}

\usepackage{lipsum}
\usepackage{amsfonts}
\usepackage{graphicx}
\usepackage{epstopdf}
\usepackage{algorithmic}
\ifpdf
  \DeclareGraphicsExtensions{.eps,.pdf,.png,.jpg}
\else
  \DeclareGraphicsExtensions{.eps}
\fi


\newsiamremark{remark}{Remark}
\newsiamremark{hypothesis}{Hypothesis}
\crefname{hypothesis}{Hypothesis}{Hypotheses}
\newsiamthm{claim}{Claim}

\headers{LDG approximation for the \lowercase{$p$}-Navier--Stokes system}{A. Kaltenbach, M. R\r{U}\v{Z}I\v{C}KA}

\title{A Local Discontinuous Galerkin approximation for the \lowercase{$p$}-Navier--Stokes system, Part III: Convergence rates for the pressure\thanks{Submitted to the editors \today.
}}

\author{Alex Kaltenbach\thanks{Department of Applied Mathematics, Albert--Ludwigs--University Freiburg, 79104 (Germany)
  (\email{alex.kaltenbach@mathematik.uni-freiburg.de}).}
\and Michael R\r{U}\v{Z}I\v{C}KA\thanks{Department of Applied Mathematics, Albert--Ludwigs--University Freiburg, 79104 (Germany)
  (\email{rose@mathematik.uni-freiburg.de}).}}

\usepackage{amsopn}


\usepackage{xcolor}
\definecolor{rltred}{rgb}{0.75,0,0}
\definecolor{rltgreen}{rgb}{0,0.5,0}
\definecolor{rltblue}{rgb}{0,0,0.75}

\ifpdf
\hypersetup{
  pdftitle={A Local Discontinuous Galerkin Approximation for the $p$-Navier--Stokes system},
  pdfauthor={A. Kaltenbach, M. \Ruzicka}
}
\fi




\providecommand{\meantmp}[2]{#1\langle{#2}#1\rangle}
\providecommand{\mean}[1]{\meantmp{}{#1}}

\providecommand{\jumptmp}[2]{#1\llbracket{#2}#1\rrbracket}
\providecommand{\jump}[1]{\jumptmp{}{#1}}

\providecommand{\avgtmp}[2]{#1\{{#2}#1\}}
\providecommand{\avg}[1]{\avgtmp{}{#1}}
\providecommand{\bigavg}[1]{\avgtmp{\big}{#1}}

\providecommand{\flux}[1]{{\widehat{#1}}}

\providecommand{\PiDG}{{\Uppi_{h}^{k}}}

\providecommand{\Pia}{{\Uppi_h^{0}}}

\providecommand{\Vo}{\smash{\mathaccent23 V}}
\providecommand{\Qo}{\smash{\mathaccent23 Q}}
\providecommand{\Xhk}{\smash{X_h^k}}
\providecommand{\Vhk}{\smash{V_h^k}}

\providecommand{\Qhkc}{\smash{Q_{h,c}^k}}
\providecommand{\Qhko}{\smash{{\mathaccent23 Q}_h^k}}
\providecommand{\Qhkco}{\smash{{\mathaccent23 Q}_{h,c}^k}}

\providecommand{\SSS}{\boldsymbol{\mathcal{S}}}

\newcommand{\Ghk}{\boldsymbol{\mathcal{G}}_h^k}

\newcommand{\Dhk}{\boldsymbol{\mathcal{D}}_h^k}

\newcommand{\Divhk}{\mathcal{D}\dot{\iota}\nu_h^k}

\newcommand{\Rhk}{\boldsymbol{\mathcal{R}}_h^k}

\newcommand{\WDG}{W^{1,p}(\mathcal{T}_h)}

\providecommand{\divo}{\textrm{div}\,}

\providecommand{\sss}{\avg{\abs{\Pia \Dhk \bfv_h}}}
\providecommand{\sssl}{\avg{\abs{\Pia \Dhk \bfv_h}}}

\begin{document}

\maketitle

\begin{abstract}
  In the present paper, we prove convergence rates for the pressure of
  the Local Discontinuous Galerkin (LDG) approximation, proposed in
  Part I of the paper (cf.~\cite{kr-pnse-ldg-1}), of systems of
  $p$-Navier--Stokes~type and $p$-Stokes type with $p\in
  (2,\infty)$. 
  The results are supported by numerical experiments.
\end{abstract}

\begin{keywords}
    discontinuous Galerkin, $p$-Navier--Stokes system, error bounds, pressure
\end{keywords}

\begin{MSCcodes}
    76A05, 35Q35, 65N30, 65N12, 65N15   
\end{MSCcodes}

\section{Introduction}

In this paper, we continue our study of the Local Discontinuous Galerkin
(LDG) scheme, proposed in Part I of the paper (cf.~\cite{kr-pnse-ldg-1}), of
steady systems~of $p$-Navier--Stokes type. In this paper, as we already did
in Part II  of the paper (cf.~\cite{kr-pnse-ldg-2}), we restrict
ourselves to the homogeneous problem, i.e., 
\begin{equation}
  \label{eq:p-navier-stokes}
  \begin{aligned}
    -\divo\SSS(\bfD\bfv)+[\nabla\bfv]\bfv+\nabla q&=\bfg  \qquad&&\text{in }\Omega\,,\\
    \divo\bfv&=0 \qquad&&\text{in }\Omega\,,
    \\
    \bfv &= \mathbf{0} &&\text{on } \partial\Omega\,.
  \end{aligned}
\end{equation}
This system describes the steady motion of a homogeneous,
incompressible~fluid~with shear-dependent viscosity. More precisely,
for a given vector field $\bfg:\Omega\to \setR^d$ describing external
body forces and a homogeneous Dirichlet
boundary~condition~\eqref{eq:p-navier-stokes}$_3$, we seek for a
velocity vector field~${\bfv=(v_1,\dots,v_d)^\top\colon \Omega\to
  \setR^d}$~and~a~scalar~kinematic~pressure ${q\colon \Omega\hspace{-0.15em}\to\hspace{-0.15em} \setR}$ solving~\eqref{eq:p-navier-stokes}.
Here, $\Omega\hspace{-0.15em}\subseteq\hspace{-0.15em} \mathbb{R}^d$, $d\hspace{-0.15em}\in\hspace{-0.15em} \set{2,3}$, is a bounded~polyhedral~domain~\mbox{having} a Lipschitz continuous boundary $\partial\Omega$. The extra stress tensor $\SSS(\bfD\bfv)\colon \Omega\to \setR^{d\times d}_{\textup{sym}}$ depends on the strain rate tensor $\smash{\bfD\bfv\coloneqq \frac{1}{2}(\nabla\bfv+\nabla\bfv^\top)\colon \Omega\to \setR^{d\times d}_{\textup{sym}}}$, i.e., the symmetric part of the velocity tensor $\bfL\coloneqq \nabla\bfv
\colon \Omega\to \setR^{d\times d}$.~The~convective~term~$\smash{[\nabla\bfv]\bfv\colon \Omega\to \mathbb{R}^d}$ is defined via $\smash{([\nabla\bfv]\bfv)_i\coloneqq \sum_{j=1}^d{v_j\partial_j v_i}}$ for all $i=1,\dots,d$.

Throughout the paper, we  assume that the extra stress tensor~$\SSS$~has~\mbox{$(p,\delta)$-structure} (cf.~Assumption~\ref{assum:extra_stress}). The relevant example falling into this class is 
\begin{align*}
    \SSS(\bfD\bfv)=\mu\, (\delta+\vert \bfD\bfv\vert)^{p-2}\bfD\bfv\,,
\end{align*}
where $p\in (1,\infty)$, $\delta\ge 0$, and $\mu>0$.

For a discussion of the model and the state of the art, we refer to
Part~I~of~the~paper (cf.~\cite{kr-pnse-ldg-1}). As already pointed
out, to the best of the authors' knowledge, there are no
investigations using DG methods for the $p$-Navier--Stokes problem
\eqref{eq:p-navier-stokes}.~In~this~paper, we continue the
investigations of Part I and Part II of the paper
(cf.~\cite{kr-pnse-ldg-1, kr-pnse-ldg-2}), and
prove~\mbox{convergence}~rates for the pressure of the homogeneous
$p$-Navier--Stokes problem \eqref{eq:p-navier-stokes} under the
assumption~that~the velocity and $\bfg$ satisfy natural regularity
conditions and a 
smallness~condition for the velocity~in~the~energy~norm. In doing so,
we restrict ourselves~to~the~case~${p \in (2,\infty)}$. Our approach
is inspired by the results in \cite{dkrt-ldg}, \cite{kr-phi-ldg}, and
\cite{bdr-phi-stokes}. The same results are obtained for the
$p$-Stokes problem without the~smallness~condition.  We would
  like to point out that there are no results in the literature
  proving convergence rates for the pressure for the
  $p$-Navier--Stokes equations \eqref{eq:p-navier-stokes} ($p\neq 2$)
  neither for DG methods nor for FE methods. Even in the case of the
  $p$-Stokes problem ($p\neq 2$) there is only one result for DG
  methods and some for FE methods (cf.~Remark \ref{rem:pse}).

\textit{This paper is organized as follows:} \!In Section
\ref{sec:preliminaries}, we introduce the employed~\mbox{notation}, define
 relevant function spaces, 
 basic assumptions on the extra stress~tensor~$\SSS$~and~its consequences, the weak formulations Problem (Q) and
Problem~(P)~of~the~system~\eqref{eq:p-navier-stokes}, and the  discrete
operators.  In Section \ref{sec:ldg}, we define our numerical
fluxes~and~derive~the flux \hspace{-0.1mm}and \hspace{-0.1mm}the \hspace{-0.1mm}primal
\hspace{-0.1mm}formulation, \hspace{-0.1mm}i.e, \hspace{-0.1mm}Problem~\hspace{-0.1mm}(Q$_h$) \hspace{-0.1mm}and \hspace{-0.1mm}Problem~\hspace{-0.1mm}(P$_h$),~\hspace{-0.1mm}of~\hspace{-0.1mm}the~\hspace{-0.1mm}\mbox{system}~\hspace{-0.1mm}\eqref{eq:p-navier-stokes}.  In Section
\ref{sec:rates}, we~\mbox{derive} error estimates for our problem
(cf.~Theorem~\ref{thm:error}, Corollary~\ref{cor:error}).  These
are the first convergence rates for a DG-method for systems of
$p$-Navier--Stokes type. In Section \ref{sec:experiments}, we present 
numerical experiments.

\section{Preliminaries}\label{sec:preliminaries}

\subsection{Function \hspace*{-0.1mm}spaces}

\!We \hspace*{-0.1mm}use \hspace*{-0.1mm}the \hspace*{-0.1mm}same \hspace*{-0.1mm}notation \hspace*{-0.1mm}as \hspace*{-0.1mm}in \hspace*{-0.1mm}Part~\hspace*{-0.1mm}I~\hspace*{-0.1mm}of~\hspace*{-0.1mm}the~\hspace*{-0.1mm}paper~\hspace*{-0.1mm}(cf.~\hspace*{-0.1mm}\cite{kr-pnse-ldg-1}). For the convenience of the reader, we repeat some of it.

We employ $c, C>0$ to denote generic constants, that may change from line
to line, but are not depending on the crucial quantities. For $k\in \setN$ and $p\in [1,\infty]$, we employ the customary
Lebesgue spaces $(L^p(\Omega), \smash{\norm{\cdot}_p}) $ and Sobolev
spaces $(W^{k,p}(\Omega), \smash{\norm{\cdot}_{k,p}})$, where $\Omega \hspace{-0.1em}\subseteq \hspace{-0.1em} \setR^d$, $d \hspace{-0.1em}\in \hspace{-0.1em} \set{2,3}$, is a bounded,
polyhedral Lipschitz domain.~The~space~$\smash{W^{1,p}_0(\Omega)}$
is defined as the space of functions from $W^{1,p}(\Omega)$ whose trace vanishes on $\partial\Omega$.~We~equip $\smash{W^{1,p}_0(\Omega)}$ 
with the norm $\smash{\norm{\nabla\,\cdot\,}_p}$. 

We do not distinguish between spaces for scalar,
vector-~or~\mbox{tensor-valued}~functions. However, we always denote
vector-valued functions by boldface letters~and~tensor-valued
functions by capital boldface letters. The mean value of a locally
integrable function $f$ over a measurable set $M\subseteq \Omega$ is
denoted by
${\mean{f}_M\coloneqq \smash{\dashint_M f
    \,\textup{d}x}\coloneqq \smash{\frac 1 {|M|}\int_M f
    \,\textup{d}x}}$.~Moreover, we employ the notation
$\hskp{f}{g}\coloneqq \int_\Omega f g\,\textup{d}x$, whenever the
right-hand side is well-defined.

From the theory of Orlicz spaces (cf.~\cite{ren-rao}) and generalized
Orlicz spaces~(cf.~\cite{HH19}), we use N-functions
$\psi\colon  \setR^{\geq 0} \to \setR^{\geq 0}$ and generalized N-functions
$\psi \colon \Omega \times \setR^{\ge 0} \to \setR^{\ge 0}$,~i.e.,
$\psi$ is a Carath\'eodory function such that $\psi(x,\cdot)$ is an
N-function~for~a.e.~${x \in \Omega}$,~respectively. For  $f\in L^0(\Omega)$\footnote{Here, $L^0(\Omega)$ denotes the set of Lebesgue measurable scalar function defined on $\Omega$.\vspace{-2mm}}, the modular~is~defined~via
$\rho_\psi(f)\coloneqq \rho_{\psi,\Omega}(f)\coloneqq \int_\Omega
\psi(\abs{f})\,\textup{d}x $ if $\psi$ is an N-function and
$\rho_\psi(f)\coloneqq \rho_{\psi,\Omega}(f)\coloneqq \int_\Omega
\psi(\cdot,\abs{f})\,\textup{d}x $, if $\psi$
is~a~generalized~\mbox{N-function}. Then, for a (generalized)
N-function $\psi$, we~denote~by~${L^{\psi}(\Omega)\coloneqq \{f\hspace{-0.15em}\in\hspace{-0.15em} L^0(\Omega)\mid
\rho_\psi(f)\hspace{-0.15em}<\hspace{-0.15em}\infty\}}$, the (generalized) Orlicz space. Equipped with
the induced Luxembourg~norm, i.e.,
$\smash{\norm {f}_{\psi}}\coloneqq  \smash{\inf \set{\lambda >0\mid
    \rho_\psi(f/\lambda) \le 1}}$, the space (generalized) Orlicz space
$L^\psi(\Omega)$~is~a~Banach space.  If $\psi$ is a generalized
N-function, then, for every $f\in L^{\psi}(\Omega)$ and
$g\in L^{\psi^*}(\Omega)$, there holds the generalized~Hölder~inequality
\begin{align}\label{eq:gen_hoelder}
    (f,g)\leq 2\,\|f\|_{\psi}\|g\|_{\psi^*}\,.
\end{align}
An N-function
$\psi$~satisfies~the~\mbox{$\Delta_2$-condition}, 
if there exists
$K> 2$ such that for all
$t \ge 
0$, it holds ${\psi(2\,t) \leq K\,
  \psi(t)}$. We denote the smallest such constant by
$\Delta_2(\psi)\hspace*{-0.1em}>\hspace*{-0.1em}0$.
We need the following version of~the~\mbox{$\varepsilon$-Young} inequality: for every
${\varepsilon\!>\! 0}$,~there exits a constant $c_\epsilon\!>\!0 $, depending~only~on~$\Delta_2(\psi),\Delta_2( \psi ^*)\!<\!\infty$, such that~for~every~${s,t\!\geq\! 0}$,~it~holds
\begin{align}
  \label{ineq:young}
  \begin{split}
    t\,s&\leq \epsilon \, \psi(t)+ c_\epsilon \,\psi^*(s)\,.
  \end{split}
\end{align}

\subsection{Basic \hspace*{-0.1mm}properties \hspace*{-0.1mm}of \hspace*{-0.1mm}the \hspace*{-0.1mm}extra \hspace*{-0.1mm}stress \hspace*{-0.1mm}tensor}

\!\!Throughout~\hspace*{-0.1mm}the~\hspace*{-0.1mm}entire~\hspace*{-0.1mm}\mbox{paper}, we always assume that the extra stress tensor 
$\SSS$
has $(p,\delta)$-structure, which is defined here in a more stringent
way compared to Part I of the paper (cf.~\cite{kr-pnse-ldg-1}). A detailed
discussion and full proofs can be found, e.g., in
\cite{die-ett,dr-nafsa}. For a given tensor $\bfA\in \setR^{d\times d}$, we denote its symmetric part by
${\bfA^{\textup{sym}}\coloneqq \frac{1}{2}(\bfA+\bfA^\top)\in
  \setR^{d\times d}_{\textup{sym}}\coloneqq \{\bfA\in \setR^{d\times
    d}\mid \bfA=\bfA^\top\}}$.

For $p \in (1,\infty)$~and~$\delta\ge 0$, we define a special N-function
$\phi\coloneqq \phi_{p,\delta}\colon\setR^{\ge 0}\to \setR^{\ge 0}$~by
\begin{align} 
  \label{eq:def_phi} 
  \varphi(t)\coloneqq  \int _0^t \varphi'(s)\, \mathrm ds,\quad\text{where}\quad
  \varphi'(t) \coloneqq  (\delta +t)^{p-2} t\,,\quad\textup{ for all }t\ge 0\,.
\end{align}
The properties of $\phi$ are discussed in detail in \cite{die-ett,dr-nafsa,kr-pnse-ldg-1}.

An important tool in our analysis play {\rm shifted N-functions}
$\{\psi_a\}_{\smash{a \ge 0}}$,~cf.~\cite{DK08,dr-nafsa}. For a given N-function $\psi\colon\mathbb{R}^{\ge 0}\to \mathbb{R}^{\ge
  0}$, we define the family  of shifted N-functions ${\psi_a\colon\mathbb{R}^{\ge
    0}\to \mathbb{R}^{\ge 0}}$,~${a \ge 0}$,  via
\begin{align}
  \label{eq:phi_shifted}
  \psi_a(t)\coloneqq  \int _0^t \psi_a'(s)\, \mathrm ds\,,\quad\text{where }\quad
  \psi'_a(t)\coloneqq \psi'(a+t)\frac {t}{a+t}\,,\quad\textup{ for all }t\ge 0\,.
\end{align}

\begin{assumption}[Extra stress tensor]\label{assum:extra_stress} We assume that the extra stress tensor $\SSS\colon\setR^{d\times d}\to \setR^{d\times d}_{\textup{sym}}$ belongs to $C^0(\setR^{d\times d}; \setR^{d\times d}_{\textup{sym}})\cap C^1(\setR^{d\times d}\setminus\{\mathbf{0}\}; \setR^{d\times d}_{\textup{sym}}) $ and satisfies $\SSS(\bfA)=\SSS(\bfA^{\textup{sym}})$ for all $\bfA\in \setR^{d\times d}$ and $\SSS(\mathbf{0})=\mathbf{0}$. Moreover, we assume~that~the~tensor $\SSS=(S_{ij})_{i,j=1,\dots,d}$ has $(p,\delta)$-structure, i.e.,
  for some $p \in (1, \infty)$, $ \delta\in [0,\infty)$, and the
  N-function $\varphi=\varphi_{p,\delta}$ (cf.~\eqref{eq:def_phi}), there
  exist constants $C_0, C_1 >0$ such that
   \begin{align}
       \sum\limits_{i,j,k,l=1}^d \partial_{kl} S_{ij} (\bfA)
       B_{ij}B_{kl} &\ge C_0 \, \frac{\phi'(|\bfA^{\textup{sym}}|)}{|\bfA^{\textup{sym}}|}\,|\bfB^{\textup{sym}}|^2\,,\label{assum:extra_stress.1}
       \\
       \big |\partial_{kl} S_{ij}({\bfA})\big | &\le C_1 \, \frac{\phi'(|\bfA^{\textup{sym}}|)}{|\bfA^{\textup{sym}}|}\label{assum:extra_stress.2}
   \end{align}
are satisfied for all $\bfA,\bfB \in \setR^{d\times d}$ with $\bfA^{\textup{sym}}\neq \mathbf{0}$ and all $i,j,k,l=1,\dots,d$.~The~constants $C_0,C_1>0$ and $p\in (1,\infty)$ are called the {characteristics} of $\SSS$.
\end{assumption}

\begin{remark}
  {\rm (i) It is well-known (cf.~\cite{dr-nafsa}) that the conditions
    \eqref{assum:extra_stress.1}, \eqref{assum:extra_stress.2} imply~the conditions in the definition of the $(p,\delta)$-structure in
    Part~I~of~the paper (cf.~\cite{kr-pnse-ldg-1}).

    (ii)     Assume that $\SSS$ satisfies Assumption \ref{assum:extra_stress} for some
    $\delta \in [0,\delta_0]$.~Then,~if~not otherwise stated, the
    constants in the estimates depend only on the characteristics~of~$\SSS$~and on $\delta_0\ge 0$, but are independent of $\delta\ge 0$.

    (iii)     
    Let $\phi$ and $\{\phi_a\}_{a\ge 0}$ be defined in \eqref{eq:def_phi} and \eqref{eq:phi_shifted}, respectively. 
    Then, the shifted operators 
    $\SSS_a\colon\mathbb{R}^{d\times d}\to \smash{\mathbb{R}_{\textup{sym}}^{d\times
      d}}$, $a \ge 0$, defined, for every $a \ge 0$
    and~$\bfA \in \mathbb{R}^{d\times d}$,~via 
\begin{align}
  \label{eq:flux}
  \SSS_a(\bfA) \coloneqq 
  \frac{\phi_a'(\abs{\bfA^{\textup{sym}}})}{\abs{\bfA^{\textup{sym}}}}\,
  \bfA^{\textup{sym}}\,, 
\end{align}
have $(p, \delta +a)$-structure.  In this case, the characteristics of
$\SSS_a$ depend~only~on~${p\in (1,\infty)}$ and are independent of
$\delta \geq 0$ and $a\ge 0$.
  }
\end{remark}

Closely related to the extra stress tensor $\SSS$ with
$(p,\delta)$-structure is the non-linear function
$\bfF\colon\setR^{d\times d}\to \setR^{d\times d}_{\textup{sym}}$, 
 for every $\bfA\in \mathbb{R}^{d\times d}$, defined via
\begin{align}
\begin{aligned}
    \bfF(\bfA)&\coloneqq (\delta+\vert \bfA^{\textup{sym}}\vert)^{\smash{\frac{p-2}{2}}}\bfA^{\textup{sym}}
   \,.
    \end{aligned}\label{eq:def_F}
\end{align}

The connections between
$\SSS,\bfF\colon\setR^{d \times d}
\hspace{-0.05em}\to\hspace{-0.05em} \setR^{d\times d}_{\textup{sym}}$ and
$\phi_a,(\phi_a)^*\colon\setR^{\ge
  0}\hspace{-0.05em}\to\hspace{-0.05em} \setR^{\ge
  0}$,~${a\hspace{-0.05em}\ge\hspace{-0.05em} 0}$, are best explained
by the following result (cf.~\cite{die-ett,dr-nafsa,dkrt-ldg}).

\begin{proposition}
  \label{lem:hammer}
  Let $\SSS$ satisfy Assumption~\ref{assum:extra_stress}, let $\varphi$ be defined in \eqref{eq:def_phi}, and let $\bfF,\bfF^*$ be defined in \eqref{eq:def_F}. Then, uniformly with respect to 
  $\bfA, \bfB \in \setR^{d \times d}$, we have that\vspace{-1mm}
    \begin{align}\label{eq:hammera}
        \begin{aligned}
        \big(\SSS(\bfA) - \SSS(\bfB)\big)
      :(\bfA-\bfB ) &\sim  \abs{ \bfF(\bfA) - \bfF(\bfB)}^2
      \\
      &\sim \phi_{\abs{\bfA^{\textup{sym}}}}(\abs{\bfA^{\textup{sym}}
        - \bfB^{\textup{sym}}})
      \\
      &\sim(\varphi_{\abs{\bfA^{\textup{sym}} }})^*(\abs{\SSS(\bfA ) - \SSS(\bfB )})
      \,.
      \end{aligned}
    \end{align}
  The constants in \eqref{eq:hammera} 
  depend only on the characteristics of ${\SSS}$.
\end{proposition}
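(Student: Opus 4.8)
The plan is to reduce everything to symmetric matrices and then combine the pointwise ellipticity/growth bounds \eqref{assum:extra_stress.1}, \eqref{assum:extra_stress.2} with the elementary calculus of shifted N-functions. Since $\SSS(\bfA)=\SSS(\bfA^{\textup{sym}})$, $\bfF(\bfA)=\bfF(\bfA^{\textup{sym}})$, and every term in \eqref{eq:hammera} depends on $\bfA,\bfB$ only through $\bfA^{\textup{sym}},\bfB^{\textup{sym}}$, it suffices to treat $\bfA,\bfB\in\setR^{d\times d}_{\textup{sym}}$; I will abbreviate $\kappa\coloneqq\delta+\abs{\bfA}+\abs{\bfB}$. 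Two auxiliary facts about $\varphi=\varphi_{p,\delta}$, both immediate from \eqref{eq:def_phi}, \eqref{eq:phi_shifted} and the doubling of $\varphi'$ and of $t\mapsto(\delta+t)^{p-2}$, will carry most of the argument: the \emph{segment lemma}
\begin{align*}
  \int_0^1\frac{\varphi'(\abs{\bfB+\theta(\bfA-\bfB)})}{\abs{\bfB+\theta(\bfA-\bfB)}}\,\textup{d}\theta\sim\frac{\varphi'(\kappa)}{\kappa}=\kappa^{p-2}\,,
\end{align*}
uniformly in $\bfA,\bfB$ (cf.~\cite{die-ett,dr-nafsa}), and the \emph{stability of shift}: $\varphi_a(t)\sim(\delta+a+t)^{p-2}t^2$ and $\varphi_a'(t)\sim(\delta+a+t)^{p-2}t$ uniformly in $a,t\ge0$, which together with $\delta+\abs{\bfA}+\abs{\bfA-\bfB}\sim\kappa$ yields $\varphi_{\abs{\bfA}}(\abs{\bfA-\bfB})\sim\kappa^{p-2}\abs{\bfA-\bfB}^2$ and $\varphi_{\abs{\bfA}}'(\abs{\bfA-\bfB})\sim\kappa^{p-2}\abs{\bfA-\bfB}$ (cf.~\cite{DK08,dr-nafsa}).

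First I would dispatch the first three equivalences by showing each equals a constant multiple of $\kappa^{p-2}\abs{\bfA-\bfB}^2$. The third equivalence in \eqref{eq:hammera} is exactly the stability-of-shift identity just quoted. For the first, writing $\bfA_\theta\coloneqq\bfB+\theta(\bfA-\bfB)$ and using $\SSS\in C^1(\setR^{d\times d}\setminus\{\mathbf{0}\})$, the fundamental theorem of calculus gives $(\SSS(\bfA)-\SSS(\bfB)):(\bfA-\bfB)=\int_0^1\sum_{i,j,k,l}\partial_{kl}S_{ij}(\bfA_\theta)(\bfA-\bfB)_{ij}(\bfA-\bfB)_{kl}\,\textup{d}\theta$ (the set of $\theta$ with $\bfA_\theta=\mathbf{0}$ is null, since $\theta\mapsto\bfA_\theta$ is an injective affine line when $\bfA\neq\bfB$); bounding the integrand from below by \eqref{assum:extra_stress.1} and from above by \eqref{assum:extra_stress.2} and applying the segment lemma yields $(\SSS(\bfA)-\SSS(\bfB)):(\bfA-\bfB)\sim\kappa^{p-2}\abs{\bfA-\bfB}^2$, and in particular this quantity is nonnegative. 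Finally, $\abs{\bfF(\bfA)-\bfF(\bfB)}^2\sim\kappa^{p-2}\abs{\bfA-\bfB}^2$ is the classical pointwise $\bfF$-estimate (cf.~\cite{die-ett,dr-nafsa}), obtained the same way from the explicit form \eqref{eq:def_F} of $\bfF$. Chaining these comparisons proves the first two lines of \eqref{eq:hammera}.

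Next I would handle the conjugate line. On one side, integrating \eqref{assum:extra_stress.2} along $\bfA_\theta$ and using the segment lemma gives $\abs{\SSS(\bfA)-\SSS(\bfB)}\le C\,\kappa^{p-2}\abs{\bfA-\bfB}\sim\varphi_{\abs{\bfA}}'(\abs{\bfA-\bfB})$; applying the increasing map $(\varphi_{\abs{\bfA}})^*$, using that $\Delta_2((\varphi_{\abs{\bfA}})^*)$ is bounded uniformly in the shift, and the duality identity $(\varphi_{\abs{\bfA}})^*(\varphi_{\abs{\bfA}}'(t))\sim\varphi_{\abs{\bfA}}(t)$ yields $(\varphi_{\abs{\bfA}})^*(\abs{\SSS(\bfA)-\SSS(\bfB)})\le C\,\varphi_{\abs{\bfA}}(\abs{\bfA-\bfB})$. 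For the reverse bound I would apply the $\varepsilon$-Young inequality \eqref{ineq:young} for $\varphi_{\abs{\bfA}}$ (whose relevant $\Delta_2$-constants are again uniform in the shift) with $s=\abs{\bfA-\bfB}$, $t=\abs{\SSS(\bfA)-\SSS(\bfB)}$, together with $(\SSS(\bfA)-\SSS(\bfB)):(\bfA-\bfB)\le\abs{\SSS(\bfA)-\SSS(\bfB)}\,\abs{\bfA-\bfB}$ and the comparison from Step~1, to obtain
\begin{align*}
  c\,\varphi_{\abs{\bfA}}(\abs{\bfA-\bfB})\le(\SSS(\bfA)-\SSS(\bfB)):(\bfA-\bfB)\le\varepsilon\,\varphi_{\abs{\bfA}}(\abs{\bfA-\bfB})+c_\varepsilon\,(\varphi_{\abs{\bfA}})^*(\abs{\SSS(\bfA)-\SSS(\bfB)})\,,
\end{align*}
and then choose $\varepsilon=c/2$ and absorb the first term on the right, which together with the upper bound and Step~1 closes the chain.

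The main obstacle is not any single estimate but the requirement that every constant be uniform with respect to $\delta\ge0$ and over the entire range of $\bfA,\bfB$, including the degenerate directions along which $\bfA_\theta$ passes through $\mathbf{0}$; this forces the argument to run through the doubling constants $\Delta_2(\varphi),\Delta_2(\varphi^*)$ and the explicit monotonicity of $(\delta+t)^{p-2}$ rather than through mere $C^1$-bounds, and it is precisely the segment lemma and the stability-of-shift estimates quoted above that package this uniformity. Once those are in hand, the remainder is bookkeeping with the definitions \eqref{eq:def_phi}, \eqref{eq:phi_shifted}, \eqref{eq:def_F}.
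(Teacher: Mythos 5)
The paper offers no proof of this proposition at all---it only cites \cite{die-ett,dr-nafsa,dkrt-ldg}---and your argument is precisely the standard one from those references (fundamental theorem of calculus along the segment, the ``Hammer''/segment lemma, stability of the shift, and the $\varepsilon$-Young inequality for the conjugate line), and it is correct, including the uniformity in $\delta$ and in the shift. The only cosmetic slip is that when invoking \eqref{ineq:young} you label $s=\abs{\bfA-\bfB}$ and $t=\abs{\SSS(\bfA)-\SSS(\bfB)}$, which is the reverse of the roles these variables play in \eqref{ineq:young} as stated, but the displayed inequality you actually derive is the right one.
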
 
\begin{remark}\label{rem:sa}
  {\rm
For the operators $\SSS_a\colon\mathbb{R}^{d\times d}\hspace{-0.1em}\to\hspace{-0.1em}\smash{\mathbb{R}_{\textup{sym}}^{d\times
      d}}$, $a \ge  0$, defined~in~\eqref{eq:flux},~the~asser-tions of Proposition \ref{lem:hammer} hold with $\phi\colon\mathbb{R}^{\ge 0}\to \mathbb{R}^{\ge 0}$ replaced
by $\phi_a\colon\mathbb{R}^{\ge 0}\to \mathbb{R}^{\ge 0}$, $a\ge 0$.}
\end{remark}

The following  results can be found
in~\cite{DK08,dr-nafsa}.

\begin{lemma}[Change of Shift]\label{lem:shift-change}
    Let $\varphi$ be defined in \eqref{eq:def_phi} and let $\bfF$ be defined in \eqref{eq:def_F}. Then,
  for each $\varepsilon>0$, there exists $c_\varepsilon\geq 1$ (depending only
  on~$\varepsilon>0$ and the characteristics of $\phi$) such that for every $\bfA,\bfB\in\smash{\setR^{d \times d}_{\textup{sym}}}$ and $t\geq 0$, it holds
  \begin{align*}
    \smash{\phi_{\abs{\bfB}}(t)}&\leq \smash{c_\varepsilon\, \phi_{\abs{\bfA}}(t)
    +\varepsilon\, \abs{\bfF(\bfB) - \bfF(\bfA)}^2\,,}
    \\
        \smash{\phi_{\abs{\bfB}}(t)}&\leq \smash{c_\varepsilon\, \phi_{\abs{\bfA}} (t)
    +\varepsilon\, \phi_{\abs{\bfA}}\big(\bigabs{\abs{\bfB} - \abs{\bfA}}\big)\,,}
    \\
    \smash{(\phi_{\abs{\bfB}})^*(t)}&\leq \smash{c_\varepsilon\, (\phi_{\abs{\bfA}})^*(t)
                                      +\varepsilon\, \abs{\bfF(\bfB) - \bfF(\bfA)}^2}\,,
   \\
    \smash{(\phi_{\abs{\bfB}})^*(t)}&\leq \smash{c_\varepsilon\, (\phi_{\abs{\bfA}})^*(t)
    +\varepsilon\, \phi_{\abs{\bfA}}\big(\bigabs{\abs{\bfB} - \abs{\bfA}}\big)}\,.
  \end{align*}
\end{lemma}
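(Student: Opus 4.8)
The plan is to reduce all four inequalities to two scalar statements about the N-function $\varphi=\varphi_{p,\delta}$ from~\eqref{eq:def_phi} and its conjugate $\varphi^*$, both of which satisfy the $\Delta_2$-condition with constants controlled by $p$. First I would dispose of the first and third inequalities: applying Proposition~\ref{lem:hammer} with both arguments symmetric (so that $\bfA^{\textup{sym}}=\bfA$ and $\bfB^{\textup{sym}}=\bfB$) gives
\begin{align*}
  \abs{\bfF(\bfA)-\bfF(\bfB)}^2\sim\varphi_{\abs{\bfA}}(\abs{\bfA-\bfB})\ge c\,\varphi_{\abs{\bfA}}\bigl(\bigabs{\abs{\bfA}-\abs{\bfB}}\bigr)\,,
\end{align*}
where the last step uses $\abs{\bfA-\bfB}\ge\bigabs{\abs{\bfA}-\abs{\bfB}}$ and the monotonicity of $\varphi_{\abs{\bfA}}$, with $c>0$ depending only on the characteristics. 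Hence, once the second and fourth inequalities are established, the first and third follow upon replacing $\varepsilon$ by $c\,\varepsilon$. Writing $a:=\abs{\bfA}$, $b:=\abs{\bfB}$ and recalling from~\eqref{eq:phi_shifted} that $\varphi_c=\varphi_{p,\delta+c}$, it thus remains to prove: for every $\varepsilon>0$ there is $c_\varepsilon\ge 1$, depending only on $\varepsilon$ and $p$, such that for all $a,b,t\ge 0$
\begin{align*}
  \varphi_b(t)\le c_\varepsilon\,\varphi_a(t)+\varepsilon\,\varphi_a(\abs{a-b})\,,
  \qquad
  (\varphi_b)^*(t)\le c_\varepsilon\,(\varphi_a)^*(t)+\varepsilon\,\varphi_a(\abs{a-b})\,.
\end{align*}

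For the first of these I would use the pointwise equivalence $\varphi_{p,\mu}(t)\sim(\mu+t)^{p-2}t^2$, uniform in $\mu\ge 0$, together with $\delta+a+\abs{a-b}\sim\delta+\max\{a,b\}$, which gives $\varphi_a(\abs{a-b})\sim(\delta+\max\{a,b\})^{p-2}\abs{a-b}^2$. If $p\ge 2$ and $b\le a$, or $p<2$ and $b\ge a$, then the monotonicity of $\mu\mapsto\varphi_{p,\mu}(t)$ already yields $\varphi_b(t)\le\varphi_a(t)$, so $c_\varepsilon=1$ suffices. In the complementary case I would split according to the size of $\abs{a-b}$ relative to $t$: if $\abs{a-b}\le t$, then $\delta+a+t$ and $\delta+b+t$ are comparable with an absolute constant, hence $\varphi_b(t)\sim\varphi_a(t)$; if $\abs{a-b}>t$, a short computation based on $\varphi_{p,\mu}(t)\sim(\mu+t)^{p-2}t^2$ gives $\varphi_b(t)\le C\,\bigl(t/\abs{a-b}\bigr)^{\min\{2,p\}}\varphi_a(\abs{a-b})$ with $C=C(p)$, so for $\abs{a-b}\ge K_\varepsilon\,t$ with a suitable threshold $K_\varepsilon=K_\varepsilon(\varepsilon,p)$ the error term absorbs $\varphi_b(t)$, while for $t<\abs{a-b}<K_\varepsilon\,t$ the quantities $\delta+a+t$ and $\delta+b+t$ are comparable with constant $K_\varepsilon+1$, hence $\varphi_b(t)\le c_\varepsilon\,\varphi_a(t)$.

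For the conjugate inequality I would reduce it to the one just proved. Using the classical identities $(\varphi_c)^*\sim(\varphi^*)_{\varphi'(c)}$ and the shifted Young equality $(\varphi_a)^*\bigl((\varphi_a)'(s)\bigr)\sim\varphi_a(s)$ (cf.~\cite{DK08,dr-nafsa}), together with the elementary estimate $\abs{\varphi'(a)-\varphi'(b)}\sim(\varphi_a)'(\abs{a-b})$ — which follows from $\varphi''(s)\sim(\delta+s)^{p-2}\sim\varphi'(s)/s$ and the inequality $x^{p-1}-y^{p-1}\sim(x-y)\,x^{p-2}$ for $x\ge y\ge 0$ — I would apply the scalar estimate of the previous paragraph to the N-function $\varphi^*$ (which is equivalent to $\varphi_{p',\delta^{p-1}}$, $p'=p/(p-1)$, hence of the same type, and equivalence of N-functions is stable under shifting) with shifts $\varphi'(a)$ and $\varphi'(b)$. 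This gives, for any $\tilde\varepsilon>0$,
\begin{align*}
  (\varphi_b)^*(t)\sim(\varphi^*)_{\varphi'(b)}(t)\le c_{\tilde\varepsilon}\,(\varphi^*)_{\varphi'(a)}(t)+\tilde\varepsilon\,(\varphi^*)_{\varphi'(a)}\bigl(\abs{\varphi'(a)-\varphi'(b)}\bigr)\,,
\end{align*}
and the last term is $\sim(\varphi_a)^*\bigl((\varphi_a)'(\abs{a-b})\bigr)\sim\varphi_a(\abs{a-b})$; choosing $\tilde\varepsilon$ proportional to $\varepsilon$ and absorbing all equivalence constants into $c_\varepsilon$ then finishes the argument.

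I expect the main obstacle not to be any single estimate but the bookkeeping: one must keep every constant uniform in $\delta\ge 0$ and in the shifts $a,b$ while running through the several ranges of $\abs{a-b}$ relative to $t$. The one genuinely structural point is, for the conjugate estimate, the compatibility of the shift $\varphi'(c)$ with the original shift $c$ expressed by $(\varphi_c)^*\sim(\varphi^*)_{\varphi'(c)}$, since it is precisely this that allows the error term $(\varphi^*)_{\varphi'(a)}(\abs{\varphi'(a)-\varphi'(b)})$ to collapse back to $\varphi_a(\abs{a-b})$ rather than to a conjugate expression.
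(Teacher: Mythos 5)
Your proposal is correct. Note that the paper does not prove this lemma at all — it defers to \cite{DK08,dr-nafsa} — and your argument is essentially a faithful reconstruction of the standard proof found there: reduce the $\bfF$-versions to the shift-versions via Proposition~\ref{lem:hammer} and $\bigabs{\abs{\bfA}-\abs{\bfB}}\le\abs{\bfA-\bfB}$, prove the scalar shift-change by the equivalence $\varphi_a(t)\sim(\delta+a+t)^{p-2}t^2$ and a case split on $\abs{a-b}$ versus $t$, and transfer to the conjugates via $(\varphi_a)^*\sim(\varphi^*)_{\varphi'(a)}$ together with $\abs{\varphi'(a)-\varphi'(b)}\sim\varphi_a'(\abs{a-b})$. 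The only point you gloss over is the claim that equivalence of N-functions is stable under shifting — in general this requires equivalence at the level of derivatives, not just of the functions — but here $\varphi^*$ and $\varphi_{p',\delta^{p-1}}$ are comparable with all the explicit structure needed, so the step is harmless.
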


\subsection{The $p$-Navier--Stokes system} 
Let us briefly recall some well-known facts about the $p$-Navier--Stokes system
 \eqref{eq:p-navier-stokes}. For $p\in (1,\infty)$, we define the function spaces
\begin{align*}
     \Vo\coloneqq (W^{1,p}_0(\Omega))^d\,,\qquad \Qo\coloneqq L_0^{p'}(\Omega)\coloneqq \big\{f\in L^{p'}(\Omega)\;|\;\mean{f}_{\Omega}=0\big\}\,.
\end{align*}
    With this particular notation, the weak formulation of problem \eqref{eq:p-navier-stokes} is the following:
    
\textbf{Problem (Q).} For given $\bfg\in L^{p'}(\Omega)$, find $(\bfv,q)\in \Vo \times \Qo$ such that  for all $(\bfz,z)^\top\in \Vo\times Q $, it holds
\begin{align}
    (\SSS(\bfD\bfv),\bfD\bfz)+([\nabla\bfv]\bfv,\bfz)-(q,\divo\bfz)&=(\bfg,\bfz)\label{eq:q1}\,,\\
    (\divo\bfv,z)&=0\label{eq:q2}\,.
\end{align}

Alternatively, we can reformulate Problem (Q) ``hidding'' the pressure.

\textbf{Problem (P).} \hspace{-0.1mm}For \hspace{-0.1mm}given \hspace{-0.1mm}$\bfg\!\in \!L^{p'}(\Omega)$, \hspace{-0.1mm}find \hspace{-0.1mm}$\bfv\!\in\! \Vo(0)$ 
\hspace{-0.1mm}such \hspace{-0.1mm}that \hspace{-0.1mm}for \hspace{-0.1mm}all \hspace{-0.1mm}${\bfz\!\in\! \Vo(0)}$,~\hspace{-0.1mm}it~\hspace{-0.1mm}holds
\begin{align}
    (\SSS(\bfD\bfv),\bfD\bfz)+([\nabla\bfv]\bfv,\bfz)&=(\bfg,\bfz)\,,\label{eq:p}
\end{align}
where $\Vo(0)\coloneqq \{\bfz\in \Vo\mid \divo \bfz=0\}$.

The theory of pseudo-monotone operators yields the existence of a weak
solution of Problem (P) for $p>\frac{3d}{d+2}$ (cf.~\cite{lions-quel}). DeRham's lemma, the solvability of the divergence
equation, and the negative norm theorem,~then, ensure the solvability
of Problem (Q).

\subsection{DG spaces, jumps and averages}\label{sec:dg-space}

\subsubsection{Triangulations}

\!\!We \hspace{-0.15mm}always \hspace{-0.15mm}denote \hspace{-0.15mm}by \hspace{-0.15mm}$\mathcal{T}_h$, \hspace{-0.15mm}$h\!>\!0$,\hspace{-0.15mm} a \hspace{-0.15mm}family \hspace{-0.15mm}of \hspace{-0.15mm}uniformly~\hspace{-0.15mm}shape
regular and conforming triangulations
of~${\Omega\hspace{-0.1em}\subseteq \hspace{-0.1em}\setR^d}$,~${d\hspace{-0.1em}\in\hspace{-0.1em} \set{2,3}}$,~cf.~\cite{BS08},~each
consisting of \mbox{$d$-dimensional} simplices $K$.  The parameter
$h\hspace{-0.1em}>\hspace{-0.1em}0$, refers to the~maximal~\mbox{mesh-size}~of~$\mathcal{T}_h$, for \hspace{-0.1mm}which
\hspace{-0.1mm}we \hspace{-0.1mm}assume \hspace{-0.1mm}for \hspace{-0.1mm}simplicity \hspace{-0.1mm}that \hspace{-0.1mm}$h \!\le\! 1$. \!Moreover, \hspace{-0.1mm}we \hspace{-0.1mm}assume~\hspace{-0.1mm}that~\hspace{-0.1mm}the~\hspace{-0.1mm}\mbox{chunkiness}~\hspace{-0.1mm}is bounded by some constant $\omega_0\!>\!0$, independent on
$h$. \!By $\Gamma_h^{i}$,~we~denote~the~interior~faces, and put
$\Gamma_h\coloneqq  \Gamma_h^{i}\cup \partial\Omega$.  We
assume~that~each~simplex~${K \in \mathcal{T}_h}$ has at most~one~face from $\partial\Omega$.  We introduce the following scalar
product~on~$\Gamma_h$
\begin{align*}
  \skp{f}{g}_{\Gamma_h} \coloneqq  \smash{\sum_{\gamma \in \Gamma_h} {\langle f, g\rangle_\gamma}}\,,\quad\text{ where }\quad\langle f, g\rangle_\gamma\coloneqq \int_\gamma f g \,\textup{d}s\quad\text{ for all }\gamma\in \Gamma_h\,,
\end{align*}
if all the integrals are well-defined. Similarly, we define the products 
$\smash{\skp{\cdot}{\cdot}_{\partial\Omega}}$ and~$\smash{\skp{\cdot}{\cdot}_{\Gamma_h^{i}}}$. We extend the notation of
modulars to the sets $\smash{\Gamma_h^{i}}$, 
$\partial \Omega$, and $\smash{\Gamma_h}$, i.e., we
define the modulars ${\rho_{\psi,B}(f)\coloneqq  \smash{\int _B
\psi(\abs{f})\,\textup{d}s}}$ for every $f \in \smash{L^\psi(B)}$, where $B= \smash{\Gamma_h^{i}}$~or~${B=\partial \Omega}$~or~${B=\smash{\Gamma_h}}$.

\subsubsection{Broken \hspace*{-0.1mm}function \hspace*{-0.1mm}spaces \hspace*{-0.1mm}and \hspace*{-0.1mm}projectors}

\!For every
\hspace*{-0.1mm}$m \hspace{-0.15em}\in\hspace{-0.15em}
\setN_0$~\hspace*{-0.1mm}and~\hspace*{-0.1mm}${K\hspace{-0.15em}\in\hspace{-0.15em} \mathcal{T}_h}$, we
denote by ${\mathcal P}_m(K)$, the space of polynomials of
degree at most $m$ on $K$. Then, for given $k \in \setN_0$ and $p\in
(1,\infty)$, 
we define the spaces
\begin{align}
  \begin{split}
    Q_h^k&\coloneqq \big\{ q_h\in L^1(\Omega) \fdg q_h|_K\in \mathcal{P}_k(K)\text{ for all }K\in \mathcal{T}_h\big\}\,,\\
    V_h^k&\coloneqq \big\{\bfv_h\in L^1(\Omega)^d\fdg \bfv_h|_K\in \mathcal{P}_k(K)^d\text{ for all }K\in \mathcal{T}_h\big\}\,,\\
    X_h^k&\coloneqq \big\{\bfX_h\in L^1(\Omega)^{d\times d}\fdg \bfX_h|_K\in \mathcal{P}_k(K)^{d\times d}\text{ for all }K\in \mathcal{T}_h\big\}\,,\\
        W^{1,p}(\mathcal T_h)&\coloneqq \big\{\bfw_h\in L^1(\Omega)^d\fdg \bfw_h|_K\in W^{1,p}(K)^d\text{ for all }K\in \mathcal{T}_h\big\}\,.
  \end{split}\label{eq:2.19}
\end{align}
In addition, for given $k \hspace{-0.1em}\in\hspace{-0.1em} \setN_0$, we set $\Qhkc\hspace{-0.1em}\coloneqq \hspace{-0.1em} Q_h^k\cap C^0(\overline{\Omega})$.
Note~that~${W^{1,p}(\Omega)\hspace{-0.1em}\subseteq\hspace{-0.1em} \WDG}$ and
$V_h^k\hspace{-0.1em}\subseteq \hspace{-0.1em}\WDG$. We denote by ${\PiDG \colon L^1(\Omega)\hspace{-0.1em}\to\hspace{-0.1em} V_h^k}$, the (local)
$L^2$-projection~into~$V_h^k$, which for every $\bfv \in
L^1(\Omega)$ and $\bfz_h
\in V_h^k$ is~defined~via  $\bighskp{\PiDG \bfv}{\bfz_h}=\hskp{\bfv}{\bfz_h}$. 
Analogously, we define the (local)
$L^2$-projection into $X_h^k$, i.e., ${\PiDG\colon L^1(\Omega)^{d\times d} \to \Xhk}$.

For every  $\bfw_h\in \WDG$, we denote by $\nabla_h \bfw_h\in L^p(\Omega)$,
the local gradient,~defined via
$(\nabla_h \bfw_h)|_K\coloneqq \nabla(\bfw_h|_K)$
for~all~${K\in\mathcal{T}_h}$.  For every $\bfw_h\in \WDG$ and
interior faces $\gamma\in \Gamma_h^{i}$ shared by adjacent elements
$K^-_\gamma, K^+_\gamma\in \mathcal{T}_h$, we~denote~by
\begin{align}
  \{\bfw_h\}_\gamma&\coloneqq \smash{\frac{1}{2}}\big(\textup{tr}_\gamma^{K^+}(\bfw_h)+
  \textup{tr}_\gamma^{K^-}(\bfw_h)\big)\in
  L^p(\gamma)\,, \label{2.20}\\
  \llbracket\bfw_h\otimes\bfn\rrbracket_\gamma
  &\coloneqq \textup{tr}_\gamma^{K^+}(\bfw_h)\otimes\bfn^+_\gamma+
    \textup{tr}_\gamma^{K^-}(\bfw_h)\otimes\bfn_\gamma^- 
    \in L^p(\gamma)\,,\label{eq:2.21}
\end{align}
the {average} and {normal jump}, resp., of $\bfw_h$ on $\gamma$.
Moreover,  for boundary faces $\gamma\in \partial\Omega$, we define boundary averages and 
boundary~jumps,~resp.,~via
\begin{align}
  \{\bfw_h\}_\gamma&\coloneqq \textup{tr}^\Omega_\gamma(\bfw_h) \in L^p(\gamma)\,,\label{eq:2.23a} \\
  \llbracket \bfw_h\otimes\bfn\rrbracket_\gamma&\coloneqq 
  \textup{tr}^\Omega_\gamma(\bfw_h)\otimes\bfn \in L^p(\gamma)\,,\label{eq:2.23} 
\end{align}
where $\bfn:\partial\Omega\to \mathbb{S}^{d-1}$ denotes the unit normal vector field to $\Omega$ pointing outward. 
Analogously, we
define $\{\bfX_h\}_\gamma$ and $ \llbracket\bfX_h\bfn\rrbracket_\gamma
$~for all $\bfX_h \in \Xhk$ and $\gamma\in \Gamma_h$. Furthermore, if there is no
danger~of~confusion, then we will omit the index $\gamma\in \Gamma_h$,~in~particular,~when we interpret jumps and averages as global functions defined on whole $\Gamma_h$.

\subsubsection{DG gradient and jump operators}

For every $k\in \mathbb{N}_0$ and  face $\gamma\in \Gamma_h$, we define the
\textbf{(local)~jump~\mbox{operator}}
$\smash{\boldsymbol{\mathcal{R}}_{h,\gamma}^k \colon\WDG \to X_h^k}$~for~every~${\bfw_h\in \smash{\WDG}}$ (using
Riesz representation)~via $(\boldsymbol{\mathcal{R}}_{h,\gamma}^k\bfw_h,\bfX_h)\coloneqq \langle \llbracket\bfw_h\otimes\bfn\rrbracket_\gamma,\{\bfX_h\}_\gamma\rangle_\gamma$ for all $\bfX_h\in X_h^k$. 
For every $k\in \mathbb{N}_0$, the \textbf{(global) jump operator} $\smash{\Rhk\coloneqq \sum_{\gamma\in \Gamma_h}{\boldsymbol{\mathcal{R}}_{\gamma,h}^k}\colon\WDG \to X_h^k}$, 
by definition, for every $\bfw_h\in \smash{\WDG}$ and  $\bfX_h\in X_h^k$ satisfies
\begin{align}
  \smash{\big(\Rhk\bfw_h,\bfX_h\big)=\big\langle
  \llbracket\bfw_h\otimes\bfn\rrbracket,\{\bfX_h\}\big\rangle_{\Gamma_h}\,.}\label{eq:2.25.1}
\end{align}
Then, \hspace*{-0.15mm}for \hspace*{-0.15mm}every \hspace*{-0.15mm}$k\!\in\! \mathbb{N}_0$, \hspace*{-0.15mm}the \hspace*{-0.15mm}\textbf{DG \hspace*{-0.15mm}gradient \hspace*{-0.15mm}operator} 
\hspace*{-0.15mm}$  {\Ghk\!\coloneqq \!\nabla_h\!-\!\Rhk\colon\WDG\!\to\! L^p(\Omega)}$,  
for every $\bfw_h\in \smash{\WDG}$ and $\bfX_h\in X_h^k$ satisfies
\begin{align}
\smash{\big(\Ghk\bfw_h,\bfX_h\big)=(\nabla_h\bfw_h,\bfX_h)
  -\big\langle \llbracket
  \bfw_h\otimes\bfn\rrbracket,\{\bfX_h\}\big\rangle_{\Gamma_h}
\,. } \label{eq:DGnablaR1}
\end{align}
Apart from that, for every $\bfw_h\in \smash{\WDG}$, we introduce the {DG norm} as
\begin{align}
    \smash{\|\bfw_h\|_{\nabla,p,h}\coloneqq \|\nabla_h\bfw_h\|_p+h^{\frac{1}{p}}\big\|h^{-1}\jump{\bfw_h\otimes \bfn}\big\|_{p,\Gamma_h}\,.}
\end{align}
There exists a constant $c\!>\!0$ (cf. \cite[(A.26)--(A.28)]{dkrt-ldg}) such that for every ${\bfw_h\!\in\! \smash{\WDG}}$, it holds
\begin{align}\label{eq:eqiv0}
    \smash{c^{-1}\,\|\bfw_h\|_{\nabla,p,h}\leq \big\|\Ghk\bfw_h\big\|_p+h^{\frac{1}{p}}\big\|h^{-1}\jump{\bfw_h\otimes \bfn}\big\|_{p,\Gamma_h}\leq c\,\|\bfw_h\|_{\nabla,p,h}\,.}
\end{align}
The following result extends the embedding results for classical
Sobolev spaces $W^{1,p}(\Omega)$ and broken polynomial spaces $\Vhk$
to DG Sobolev spaces $\WDG$.

\begin{proposition}\label{prop:emb}
    Let ${p,q\in[1,\infty)}$ be such that
    $W^{1,p}(\Omega) \vnor \vnor L^{q}(\Omega)$. If $p>q$, then we
    additionally assume
    that $h\sim h_K$ uniformly with respect to  $K \in \mathcal T_h$. Then, there
    exists a constant ${c=c(p,q,\omega_0)>0}$ such that for every $\bfw_h
    \in \WDG$, it holds
  \begin{align}\label{eq:emb}
    \|\bfw_h\|_q\le c\, \|\bfw_h\|_{\nabla , p,h}\,,
  \end{align}
  i.e., $\WDG \vnor L^q(\Omega)$.
\end{proposition}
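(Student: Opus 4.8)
The plan is to compare $\bfw_h\in\WDG$ with a conforming companion $\bfw_h^{\mathrm c}\in W^{1,p}_0(\Omega)$, to apply the classical Sobolev--Poincar\'e embedding $W^{1,p}(\Omega)\hookrightarrow L^q(\Omega)$ to $\bfw_h^{\mathrm c}$, and to bound the non-conforming remainder $\bfw_h-\bfw_h^{\mathrm c}$ by the jump part of $\|\cdot\|_{\nabla,p,h}$. Since a generic element of $\WDG$ is not piecewise polynomial, the companion has to be built in two stages: first an $L^2$-projection onto a broken polynomial space, then a conforming averaging.

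\emph{Step 1 (conforming companion).} First I would replace $\bfw_h$ by $\PiDG\bfw_h\in V_h^k$. Local best-approximation estimates together with the $L^p$-stability of $\PiDG$ on shape-regular cells give, for every $K\in\mathcal T_h$, the bound $\|\bfw_h-\PiDG\bfw_h\|_{L^p(K)}+h_K\|\nabla_h(\bfw_h-\PiDG\bfw_h)\|_{L^p(K)}\le c\,h_K\|\nabla_h\bfw_h\|_{L^p(K)}$, and a scaled trace inequality turns this into a control of the face jumps of $\PiDG\bfw_h$ by those of $\bfw_h$ plus $\|\nabla_h\bfw_h\|_{L^p}$ on the two adjacent cells; hence $\|\PiDG\bfw_h\|_{\nabla,p,h}\le c\,\|\bfw_h\|_{\nabla,p,h}$. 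Next I would apply to $\PiDG\bfw_h$ an averaging operator of Oswald type, mapping $V_h^k$ into $V_h^k\cap W^{1,p}_0(\Omega)$ (continuous, vanishing at the boundary nodes), for which the standard estimate bounds $\|\PiDG\bfw_h-\bfw_h^{\mathrm c}\|_{L^p(K)}$ and $h_K\|\nabla_h(\PiDG\bfw_h-\bfw_h^{\mathrm c})\|_{L^p(K)}$ by the $L^p$-norms of the jumps of $\PiDG\bfw_h$ over the faces in a neighbourhood of $K$, including the boundary faces contained in $\partial\Omega$ (see \cite{dkrt-ldg} for reconstruction operators of this kind, and \cite{kr-pnse-ldg-1}). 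Composing the two stages and collecting the estimates yields $\|\nabla\bfw_h^{\mathrm c}\|_p\le c\,\|\bfw_h\|_{\nabla,p,h}$, together with the natural cell-wise localizations of these estimates and, globally, $\|\bfw_h-\bfw_h^{\mathrm c}\|_p\le c\,h\,\|\bfw_h\|_{\nabla,p,h}$.

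\emph{Steps 2--3 (conforming part, remainder, conclusion).} Since $\bfw_h^{\mathrm c}\in W^{1,p}_0(\Omega)$ and $W^{1,p}(\Omega)\hookrightarrow L^q(\Omega)$ by hypothesis, the classical Sobolev--Poincar\'e inequality gives $\|\bfw_h^{\mathrm c}\|_q\le c\,\|\nabla\bfw_h^{\mathrm c}\|_p\le c\,\|\bfw_h\|_{\nabla,p,h}$ --- the only place where the hypothesis on the pair $(p,q)$ enters. For the remainder $\|\bfw_h-\bfw_h^{\mathrm c}\|_q$ there are two cases. If $q\le p$, then because $\Omega$ is bounded, $\|\bfw_h-\bfw_h^{\mathrm c}\|_q\le c\,\|\bfw_h-\bfw_h^{\mathrm c}\|_p\le c\,\|\bfw_h\|_{\nabla,p,h}$ by Step 1; here the extra assumption $h\sim h_K$, imposed precisely in the subcase $p>q$, is what makes the $h_K$-weighted cell-wise estimates of Step 1 comparable with the globally $h$-weighted norm $\|\cdot\|_{\nabla,p,h}$ (equivalently, it supplies the global inverse inequality needed in that regime). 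If $q>p$ (so necessarily $p<d$ and $q\le p^{*}$), I would argue cell by cell: a scaled Sobolev inequality on $K$ applied to $\bfw_h-\bfw_h^{\mathrm c}$ --- a genuine $W^{1,p}(K)$-function, so no inverse estimate is invoked --- combined with the cell-wise bounds of Step 1 (after which all the powers of $h_K$ that appear are non-negative) and the summation inequality $\ell^p\hookrightarrow\ell^q$ (valid since $q\ge p$), yields $\|\bfw_h-\bfw_h^{\mathrm c}\|_q\le c\,\|\bfw_h\|_{\nabla,p,h}$. Adding the two estimates completes the proof.

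The heart of the matter is Step 1: producing one conforming function whose gradient and whose $L^p$-distance to an \emph{arbitrary} $\bfw_h\in\WDG$ --- not merely to a piecewise polynomial --- are both controlled by $\|\bfw_h\|_{\nabla,p,h}$. This is what forces the project-then-average construction and the careful bookkeeping of how its two families of local mesh weights recombine into the single norm $\|\cdot\|_{\nabla,p,h}$, and it is precisely this bookkeeping that requires $h\sim h_K$ when $p>q$. Steps 2--3 are then essentially classical.
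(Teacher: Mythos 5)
Your argument is correct in outline, but it follows a genuinely different route from the paper. The paper splits $\bfw_h=(\bfw_h-\Pia\bfw_h)+\Pia\bfw_h$ with the piecewise\emph{-constant} $L^2$-projection $\Pia$, controls $\|\bfw_h-\Pia\bfw_h\|_q\le c\,h^{1+d\min\{0,1/q-1/p\}}\|\nabla_h\bfw_h\|_p$ by a Poincar\'e--Sobolev interpolation estimate, and then simply invokes the \emph{discrete} Sobolev embedding of Di Pietro--Ern for the broken polynomial function $\Pia\bfw_h\in\Vhk$, together with the approximation properties of $\Pia$ to pass from $\|\Pia\bfw_h\|_{\nabla,p,h}$ back to $\|\bfw_h\|_{\nabla,p,h}$. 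You instead build a conforming companion (project onto $\Vhk$, then Oswald-average into $\Vhk\cap W^{1,p}_0(\Omega)$) and use the \emph{continuous} Sobolev--Poincar\'e embedding plus a cell-wise remainder estimate; in effect you re-prove the discrete embedding that the paper cites as a black box. Both strategies are legitimate: the paper's is shorter because it outsources the hard step, while yours is more self-contained and makes explicit where each hypothesis enters. Two caveats on your version. First, your attribution of the quasi-uniformity assumption is off: in the subcase $q\le p$ your remainder bound only uses the global estimate $\|\bfw_h-\bfw_h^{\mathrm c}\|_p\le c\,h\,\|\bfw_h\|_{\nabla,p,h}$ together with H\"older on the bounded domain, and that estimate is obtained using $h_K\le h$ in the \emph{favourable} direction, so it does not need $h\sim h_K$. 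Where local and global mesh weights genuinely compete is in the gradient bound $\|\nabla\bfw_h^{\mathrm c}\|_p\le c\,\|\bfw_h\|_{\nabla,p,h}$: the Oswald estimate produces the locally weighted jump sum $\sum_\gamma h_\gamma^{1-p}\|\jump{\cdot}\|_{p,\gamma}^p$, which dominates (rather than is dominated by) the globally weighted quantity $h^{1-p}\sum_\gamma\|\jump{\cdot}\|_{p,\gamma}^p$ appearing in $\|\cdot\|_{\nabla,p,h}$, since $h_\gamma\le h$ and $1-p\le 0$. This bookkeeping issue is independent of the relation between $p$ and $q$ (and is equally present, implicitly, in the paper's appeal to the locally weighted discrete embedding), so you should either state the Oswald estimates with the global weight from the outset, as the paper's references do, or acknowledge that this is where mesh grading actually matters. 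Second, in the case $q>p$ you should say explicitly that the compact embedding hypothesis forces $q<p^{*}$, so that the exponent $1+d(1/q-1/p)$ in the scaled cell-wise Sobolev inequality is nonnegative; you allude to this but it is the load-bearing inequality of that step.
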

\begin{proof}
  Note that $\Pia\colon L^1(\Omega)\to \Vhk$ satisfies
  \cite[Assumption A.1]{br-parabolic} (with $S_K$ replaced by $K$ and
  $r_0=0$). Therefore, proceeding (with some simplifications) as in the
  proof of \cite[Proposition A.2]{br-parabolic}, we deduce that there
    exists a constant ${c=c(p,q,\omega_0)>0}$ such that for every $\bfw_h
    \in \WDG$, it holds
  \begin{align}\label{eq:int-error1}
    \|\bfw_h -\Pia \bfw_h\|_q\le c\, h^{1 +d\min\{0, \frac 1q -\frac
    1p\}}\,  \|\nabla _h\bfw_h\|_{p}\,.
  \end{align}
  Using in \eqref{eq:int-error1} that $1 +d\,\smash{\min\{0, \frac 1q -\frac
  1p\}} \ge 0$, the discrete embedding \cite[Theorem 5.3]{ern-book} for
  functions from $\Vhk$, and the approximation properties of $\Pia$
  (cf.~\cite[Appendix~A.1]{dkrt-ldg}, \cite[Corollary A.8, Corollary A.19]{kr-phi-ldg}), we obtain
  \begin{align*}
    \|\bfw_h\|_q&\le \|\bfw_h-\Pia \bfw_h\|_q +\|\Pia \bfw_h\|_q
    \\
    &\le c\, \|\nabla _h\bfw_h\|_{p} +c\, \|\Pia \bfw_h\|_{p,\nabla ,h}
    \\
    &\le c\, \|\nabla _h\bfw_h\|_{p} +c\, \|\Pia \bfw_h -\bfw_h\|_{p,\nabla ,h}
      +c\, \| \bfw_h\|_{p,\nabla ,h}
    \\
    &\le c\, \| \bfw_h\|_{p,\nabla ,h}\,.
  \end{align*}
\end{proof}

For an  N-function $\psi$, we define the pseudo-modular\footnote{The
		definition of an pseudo-modular can be found in \cite{Mu}. We
	extend the notion of DG Sobolev spaces to  DG Sobolev-Orlicz
	spaces $W^{1,\psi}(\mathcal T_h)\coloneqq \big\{\bfw_h\!\in \!L^1(\Omega)\mid \bfw_h|_K\!\in\! W^{1,\psi}(K)\text{ for all }K\in \mathcal{T}_h\big\}$.} $\smash{m_{\psi,h}}\colon W^{1,\psi}(\mathcal T_h)\to \mathbb{R}^{\ge 0}$ for every $\bfw_h\in W^{1,\psi}(\mathcal T_h)$ via
  \begin{align} \label{def:mh}
    m_{\psi,h}(\bfw_h)&\coloneqq  h\,\rho_{\psi,\smash{\Gamma_h}}\big(h^{-1}\jump{\bfw_h\otimes \bfn}\big)\,.
  \end{align}
For $\psi = \phi_{p,0}$, we have that
$m_{\psi,h}(\bfw_h)=h\,\|h^{-1}\jump{\bfw_h\otimes
  \bfn}\|_{p,\Gamma_h}^p $~for~all~${\bfw_h\in W^{1,\psi}(\mathcal T_h)}$.
  
\subsubsection{Symmetric \hspace*{-0.15mm}DG \hspace*{-0.15mm}gradient \hspace*{-0.15mm}and \hspace*{-0.15mm}symmetric \hspace*{-0.15mm}jump \hspace*{-0.15mm}operators}

\hspace*{-0.15mm}For~\hspace*{-0.15mm}\mbox{every} ${\bfw_h\in \WDG}$, we denote by $\bfD_h\bfw_h\coloneqq [\nabla_h\bfw_h]^{\textup{sym}}\!\in\!
L^p(\Omega;\mathbb{R}^{d\times d}_{\textup{sym}})$ the~local~\mbox{symmetric}  gradient.  In addition, for every
$k\in \setN_0$ and
$\smash{X_h^{\smash{k,\textup{sym}}}\coloneqq X_h^k\cap
  L^p(\Omega;\mathbb{R}^{d\times d}_{\textup{sym}})}$,~we~define~the
{symmetric DG gradient
  operator}~$ \smash{\Dhk\colon\!\WDG\!\to\!
  L^p(\Omega;\mathbb{R}^{d\times
    d}_{\textup{sym}})}$,\hspace{.1em}for\hspace{.1em}every\hspace{.1em}${\bfw_h\!\in \!\WDG}$, via
$\smash{\Dhk\bfw_h\coloneqq [\Ghk\bfw_h]^{\textup{sym}}\!\in\!
  L^p(\Omega;\mathbb{R}^{d\times d}_{\textup{sym}})}$, i.e., for
every 
$\bfX_h\in \smash{X_h^{k,\textup{sym}}}$, we have
that
	\begin{align}
	\smash{	\big(\Dhk\bfw_h,\bfX_h\big)
		=(\bfD_h\bfw_h,\bfX_h)
		-\big\langle \llbracket \bfw_h\otimes\bfn\rrbracket,\{\bfX_h\}\big\rangle_{\Gamma_h}\,.}\label{eq:2.24}
	\end{align}
	Apart from that, for every $\bfw_h\in \WDG$, we introduce the {symmetric DG norm} as
	\begin{align}
	\smash{	\|\bfw_h\|_{\bfD,p,h}\coloneqq \|\bfD_h\bfw_h\|_p
		+\smash{h^{\frac{1}{p}}\big\|  h^{-1} \llbracket\bfw_h\otimes\bfn\rrbracket\big\|_{p,\Gamma_h}}\,.}\label{eq:2.29}
	\end{align}
	
	The following discrete Korn type inequalities play an important role
        in the numerical analysis of the
        $p$-Navier--Stokes~system~\eqref{eq:p-navier-stokes}.
	 \begin{proposition}[Discrete Korn inequality]\label{korn}
	    For every $p\in (1,\infty)$ and $k\in \setN$, there
		exists a constant~${c_{\mathbf{Korn}}>0}$ such that 
		for every $\bfv_h\in V_h^k$, it holds
		\begin{align}
		\smash{\|\bfv_h\|_{\nabla,p,h}\leq c_{\mathbf{Korn}}\,\|\bfv_h\|_{\bfD,p,h}}\label{eq:equi1}\,.
		\end{align}
	\end{proposition}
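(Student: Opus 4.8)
The plan is to reduce \eqref{eq:equi1} to the classical first Korn inequality on $W^{1,p}_0(\Omega)$ by inserting a conforming reconstruction of $\bfv_h$. Since the jump contribution $h^{\frac1p}\big\|h^{-1}\jump{\bfv_h\otimes\bfn}\big\|_{p,\Gamma_h}$ occurring in $\|\bfv_h\|_{\nabla,p,h}$ and the one occurring in $\|\bfv_h\|_{\bfD,p,h}$ are literally the same, it suffices to prove the broken Korn estimate
\begin{align*}
\|\nabla_h\bfv_h\|_p\le c\,\Big(\|\bfD_h\bfv_h\|_p+h^{\frac1p}\big\|h^{-1}\jump{\bfv_h\otimes\bfn}\big\|_{p,\Gamma_h}\Big)\qquad\text{for all }\bfv_h\in V_h^k\,;
\end{align*}
adding the jump term to both sides then yields \eqref{eq:equi1}. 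The hypothesis $k\ge 1$ enters only to ensure that the conforming Lagrange subspace $V_h^k\cap W^{1,p}_0(\Omega)$ is rich enough.

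First I would construct an averaging (conforming) operator $E_h\colon V_h^k\to V_h^k\cap W^{1,p}_0(\Omega)$ of Oswald/Karakashian--Pascal type: for every interior Lagrange node, $E_h\bfv_h$ takes the arithmetic mean of the values of $\bfv_h$ on the simplices sharing that node, and for every Lagrange node lying on $\partial\Omega$ it is set to $\mathbf{0}$. A standard averaging argument based on shape-regularity together with trace and inverse estimates (as used, e.g., for the reconstruction estimates in \cite[Appendix~A]{dkrt-ldg} and \cite[Appendix~A]{kr-phi-ldg}; and, when the global mesh size $h$ of the DG norm is matched with the local sizes $h_K$, the condition $h\sim h_K$ invoked already in Proposition~\ref{prop:emb}) then gives, for every $\bfv_h\in V_h^k$,
\begin{align*}
\big\|\nabla_h(\bfv_h-E_h\bfv_h)\big\|_p\le c\,h^{\frac1p}\big\|h^{-1}\jump{\bfv_h\otimes\bfn}\big\|_{p,\Gamma_h}\,.
\end{align*}
The contributions of the boundary nodes are absorbed on the right-hand side, because on a boundary face $\gamma\subseteq\partial\Omega$ one has $\abs{\jump{\bfv_h\otimes\bfn}_\gamma}=\abs{\textup{tr}^\Omega_\gamma(\bfv_h)}$, so that zeroing the boundary nodal values costs only the boundary-trace penalty already present in both DG norms.

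Since $E_h\bfv_h\in W^{1,p}_0(\Omega)$, the classical first Korn inequality in $L^p$, valid for $p\in(1,\infty)$ (cf.~\cite{dr-nafsa}), gives $\|\nabla(E_h\bfv_h)\|_p\le c\,\|\bfD(E_h\bfv_h)\|_p$. Combining this with the triangle inequality, the pointwise bound $\abs{\bfD_h\bfw_h}\le\abs{\nabla_h\bfw_h}$, the linearity of $\bfD_h$ (so that $\bfD(E_h\bfv_h)=\bfD_h(E_h\bfv_h)$ and $\bfD_h\bfv_h-\bfD_h(E_h\bfv_h)=\bfD_h(\bfv_h-E_h\bfv_h)$), and the estimate for $\nabla_h(\bfv_h-E_h\bfv_h)$ from the previous step, one obtains
\begin{align*}
\|\nabla_h\bfv_h\|_p&\le\big\|\nabla(E_h\bfv_h)\big\|_p+\big\|\nabla_h(\bfv_h-E_h\bfv_h)\big\|_p\\
&\le c\,\big\|\bfD(E_h\bfv_h)\big\|_p+\big\|\nabla_h(\bfv_h-E_h\bfv_h)\big\|_p\\
&\le c\,\|\bfD_h\bfv_h\|_p+c\,\big\|\nabla_h(\bfv_h-E_h\bfv_h)\big\|_p\\
&\le c\,\|\bfD_h\bfv_h\|_p+c\,h^{\frac1p}\big\|h^{-1}\jump{\bfv_h\otimes\bfn}\big\|_{p,\Gamma_h}\,,
\end{align*}
which is the desired broken Korn estimate.

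The only genuinely delicate point is the second step: establishing the $L^p$-estimate for $\nabla_h(\bfv_h-E_h\bfv_h)$ with the correct scaling, in particular controlling the interplay between the global mesh size $h$ of the DG norms and the local sizes $h_K$ (exactly the issue flagged in Proposition~\ref{prop:emb}) and the bookkeeping for the boundary nodes; everything else is soft. If one prefers not to reprove these averaging estimates, an alternative is to cite a broken Korn inequality in $W^{1,p}$ directly -- for $p=2$ such an inequality is classical, and its $W^{1,p}$-analogue follows by the very same reconstruction argument -- but the route above is self-contained given \cite{br-parabolic,dkrt-ldg,kr-phi-ldg,dr-nafsa}.
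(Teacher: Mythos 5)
The paper does not actually prove this proposition; the ``proof'' is a one-line citation to \cite[Proposition~2.4]{kr-pnse-ldg-1}, so there is no in-paper argument to compare against step by step. Your route --- Oswald/Karakashian--Pascal nodal averaging $E_h\colon V_h^k\to V_h^k\cap W^{1,p}_0(\Omega)$ with boundary nodes set to zero, the estimate $\|\nabla_h(\bfv_h-E_h\bfv_h)\|_p\lesssim$ jump penalty, the classical first Korn inequality on $W^{1,p}_0(\Omega)$, and the pointwise bound $\abs{\bfA^{\textup{sym}}}\le\abs{\bfA}$ for the triangle-inequality bookkeeping --- is exactly the standard argument used in this literature for broken Korn inequalities, and your reduction (identical jump terms on both sides, so only $\|\nabla_h\bfv_h\|_p$ needs to be controlled) is correct.

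The one point to be more careful about is the scaling issue you yourself flag. The averaging estimate naturally comes out with local weights, $\|\nabla_h(\bfv_h-E_h\bfv_h)\|_p^p\le c\sum_{\gamma\in\Gamma_h}h_\gamma^{1-p}\|\jump{\bfv_h\otimes\bfn}\|_{p,\gamma}^p$, whereas the DG norms in this paper are normalized with the \emph{global} mesh size, i.e., the jump term raised to the $p$-th power is $h^{1-p}\sum_\gamma\|\jump{\bfv_h\otimes\bfn}\|_{p,\gamma}^p$. Since $1-p<0$ and $h_\gamma\le h$, one has $h_\gamma^{1-p}\ge h^{1-p}$, so the locally weighted sum dominates the globally weighted one and cannot be absorbed into it without $h\sim h_\gamma$ uniformly. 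Thus your proof genuinely needs quasi-uniformity, which is \emph{not} among the hypotheses of the proposition as stated (the paper only assumes uniform shape regularity globally, and invokes $h\sim h_K$ as an \emph{extra} hypothesis in Proposition~\ref{prop:emb}). Either you should add quasi-uniformity as an assumption, or you should track local face sizes throughout and prove the (stronger, locally weighted) variant of the inequality, from which the stated global-$h$ version follows since replacing $h_\gamma^{1-p}$ by $h^{1-p}$ on the right-hand side only increases it. Apart from this normalization bookkeeping, the argument is sound and self-contained given the classical Korn inequality and standard averaging estimates.
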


	\begin{proof}
          See \cite[Proposition 2.4]{kr-pnse-ldg-1}.
        \end{proof}
    \begin{proposition}[Korn type inequality]\label{prop:kornii}
      For every $p\in (1,\infty)$ and $k\in \setN$, there exists a
      constant~${c}>0$ such that for every
      $\bfv_h\in V_h^k$ and every  $\bfw_h\in W^{2,p}(\mathcal{T}_h)$, it holds
    \begin{align*}
        \smash{\|\bfw_h-\bfv_h\|_{\nabla,p,h}^p\leq c\,\|\bfw_h-\bfv_h\|_{\bfD,p,h}^p+c\,h^p\,\|\nabla_h^2\bfw_h\|_p^p\,.}
    \end{align*}
    \end{proposition}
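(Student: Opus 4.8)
The plan is to insert the local $L^2$-projection $\PiDG$ onto $V_h^k$ and to reduce the estimate to the discrete Korn inequality of Proposition~\ref{korn}, which is available only for broken polynomials. Since $k\ge 1$, we have $\PiDG\bfw_h\in V_h^k$, hence also $\PiDG\bfw_h-\bfv_h\in V_h^k$, and we decompose
$$\bfw_h-\bfv_h=(\bfw_h-\PiDG\bfw_h)+(\PiDG\bfw_h-\bfv_h)\,.$$
Proposition~\ref{korn} applied to $\PiDG\bfw_h-\bfv_h$ gives $\|\PiDG\bfw_h-\bfv_h\|_{\nabla,p,h}\le c_{\mathbf{Korn}}\|\PiDG\bfw_h-\bfv_h\|_{\bfD,p,h}$, and the triangle inequality for $\|\cdot\|_{\bfD,p,h}$ bounds the right-hand side by $c_{\mathbf{Korn}}\big(\|\bfw_h-\PiDG\bfw_h\|_{\bfD,p,h}+\|\bfw_h-\bfv_h\|_{\bfD,p,h}\big)$. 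Because $\abs{\bfD_h\bfz_h}\le\abs{\nabla_h\bfz_h}$ pointwise and the face contributions in $\|\cdot\|_{\bfD,p,h}$ and $\|\cdot\|_{\nabla,p,h}$ coincide, one has $\|\bfz_h\|_{\bfD,p,h}\le\|\bfz_h\|_{\nabla,p,h}$ for every $\bfz_h\in\WDG$; used for $\bfz_h=\bfw_h-\PiDG\bfw_h$, together with the triangle inequality for $\|\cdot\|_{\nabla,p,h}$, this yields
$$\|\bfw_h-\bfv_h\|_{\nabla,p,h}\le (1+c_{\mathbf{Korn}})\,\|\bfw_h-\PiDG\bfw_h\|_{\nabla,p,h}+c_{\mathbf{Korn}}\,\|\bfw_h-\bfv_h\|_{\bfD,p,h}\,,$$
so it only remains to bound the projection error $\|\bfw_h-\PiDG\bfw_h\|_{\nabla,p,h}$ by $c\,h\,\|\nabla_h^2\bfw_h\|_p$.

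For this I would use the elementwise approximation and stability properties of $\PiDG$ on uniformly shape-regular meshes (cf.~\cite[Appendix~A.1]{dkrt-ldg}, \cite[Corollary~A.8, Corollary~A.19]{kr-phi-ldg}). Since $k\ge 1$, $\PiDG$ reproduces affine functions on each simplex, so averaged Taylor estimates combined with the $L^p$-stability of the $L^2$-projection yield, for every $K\in\mathcal{T}_h$,
$$\|\bfw_h-\PiDG\bfw_h\|_{p,K}\le c\,h_K^2\,\|\nabla^2\bfw_h\|_{p,K}\,,\qquad \|\nabla(\bfw_h-\PiDG\bfw_h)\|_{p,K}\le c\,h_K\,\|\nabla^2\bfw_h\|_{p,K}\,.$$
Summing the second estimate over $K\in\mathcal{T}_h$ and using $h_K\le h$ bounds $\|\nabla_h(\bfw_h-\PiDG\bfw_h)\|_p$ by $c\,h\,\|\nabla_h^2\bfw_h\|_p$. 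For the jump part, the scaled trace inequality $\|\bfz\|_{p,\partial K}^p\le c\,\big(h_K^{-1}\|\bfz\|_{p,K}^p+h_K^{p-1}\|\nabla\bfz\|_{p,K}^p\big)$ applied to $\bfz=\bfw_h-\PiDG\bfw_h$ on the one or two simplices adjacent to a face $\gamma\in\Gamma_h$, together with the two elementwise estimates, gives $\|\jump{(\bfw_h-\PiDG\bfw_h)\otimes\bfn}\|_{p,\gamma}^p\le c\,h^{2p-1}\sum\|\nabla^2\bfw_h\|_{p,K}^p$, the sum running over those simplices. Multiplying by $h^{1-p}$, summing over $\gamma\in\Gamma_h$, and using $h\le 1$ yields $h\,\big\|h^{-1}\jump{(\bfw_h-\PiDG\bfw_h)\otimes\bfn}\big\|_{p,\Gamma_h}^p\le c\,h^p\,\|\nabla_h^2\bfw_h\|_p^p$; hence $\|\bfw_h-\PiDG\bfw_h\|_{\nabla,p,h}\le c\,h\,\|\nabla_h^2\bfw_h\|_p$.

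Inserting this bound into the estimate of the first paragraph gives $\|\bfw_h-\bfv_h\|_{\nabla,p,h}\le c\,\|\bfw_h-\bfv_h\|_{\bfD,p,h}+c\,h\,\|\nabla_h^2\bfw_h\|_p$, and raising to the $p$-th power with $(a+b)^p\le 2^{p-1}(a^p+b^p)$ yields the claim. I expect the main difficulty to be the face contribution of the projection error: one must use that $\PiDG$ reproduces affine functions (which is exactly where $k\ge1$ enters) in order to gain the full second-order factor $h_K^2$ in $L^p(K)$, so that after the $h^{-1}$-scaling in the jump seminorm and the trace inequality the remaining power of $h$ is still $p$; the uniform shape-regularity (bounded chunkiness) is used both in the $L^p$-stability of $\PiDG$ and in the scaled trace inequality.
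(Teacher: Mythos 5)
Your argument is correct, and it is the natural (and, as far as one can tell, the intended) route: the paper itself does not reproduce a proof but defers to Part~II, where the statement is established in exactly this way — insert $\PiDG\bfw_h$, apply the discrete Korn inequality of Proposition~\ref{korn} to the polynomial part $\PiDG\bfw_h-\bfv_h$, and absorb the projection error via the second-order local approximation estimates and the scaled trace inequality. All the individual steps you use (the comparison $\|\cdot\|_{\bfD,p,h}\le\|\cdot\|_{\nabla,p,h}$, the elementwise estimates $\|\bfw_h-\PiDG\bfw_h\|_{p,K}\le c\,h_K^2\|\nabla^2\bfw_h\|_{p,K}$ and $\|\nabla(\bfw_h-\PiDG\bfw_h)\|_{p,K}\le c\,h_K\|\nabla^2\bfw_h\|_{p,K}$, and the bookkeeping of powers of $h$ in the jump term, which indeed yields $h^{1-p}\cdot h^{2p-1}=h^p$ exactly) check out, and you correctly identify where $k\ge 1$ and shape regularity enter.
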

    \begin{proof}
      See \cite[Proposition 2.8]{kr-pnse-ldg-2}.
      \end{proof}
	For the symmetric DG norm, there holds a similar relation like \eqref{eq:eqiv0}.
	
	\begin{proposition}\label{prop:equivalences}
          For every $p\in (1,\infty)$
          and $k\in \setN$, there exists
          a constant~${c>0}$
          such that for every $\bfw_h\in \WDG$, it holds
		\begin{align}
		\begin{aligned}
		\smash{c^{-1}\,\|\bfw_h\|_{\bfD,p,h}
		\leq \big\|\Dhk\bfw_h\big\|_p
		+h^{\frac{1}{p}}\big\|  h^{-1}\llbracket\bfw_h\otimes\bfn\rrbracket\big\|_{p,\Gamma_h}
		\leq c\,\|\bfw_h\|_{\bfD,p,h}\,.}
		\end{aligned}\label{eq:equi2}
		\end{align}
	\end{proposition}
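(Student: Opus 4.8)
The plan is to follow the same route as for the non-symmetric equivalence \eqref{eq:eqiv0}. The only non-elementary ingredient is the local stability estimate for the global jump operator, namely
\begin{align*}
  \big\|\Rhk\bfw_h\big\|_p\leq c\,h^{\frac{1}{p}}\big\|h^{-1}\jump{\bfw_h\otimes\bfn}\big\|_{p,\Gamma_h}\qquad\text{for all }\bfw_h\in\WDG\,,
\end{align*}
with a constant $c=c(p,k,\omega_0)>0$; this is one of the bounds underlying \eqref{eq:eqiv0} and is contained in \cite[(A.26)--(A.28)]{dkrt-ldg}. It follows from the Riesz-representation definition \eqref{eq:2.25.1} of $\Rhk$ (so that $\Rhk\bfw_h$ is supported, face by face, on the patch of the at most two simplices adjacent to an interior face), duality in $L^p$ on each such patch, a scaled inverse trace inequality for polynomials, and summation over $\Gamma_h$ using the uniform shape-regularity of $\mathcal{T}_h$. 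Everything else reduces to the triangle inequality in $L^p(\Omega)$ and the pointwise bound $\abs{\bfA^{\textup{sym}}}\leq\abs{\bfA}$, valid for every $\bfA\in\setR^{d\times d}$.

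Since $\Ghk=\nabla_h-\Rhk$ on $\WDG$ and symmetrization is linear, one has the identity $\Dhk\bfw_h=[\Ghk\bfw_h]^{\textup{sym}}=\bfD_h\bfw_h-[\Rhk\bfw_h]^{\textup{sym}}$ for every $\bfw_h\in\WDG$. Combined with $\abs{[\Rhk\bfw_h]^{\textup{sym}}}\leq\abs{\Rhk\bfw_h}$ and the displayed bound, this immediately yields both $\|\bfD_h\bfw_h\|_p\leq\|\Dhk\bfw_h\|_p+c\,h^{\frac1p}\|h^{-1}\jump{\bfw_h\otimes\bfn}\|_{p,\Gamma_h}$ and $\|\Dhk\bfw_h\|_p\leq\|\bfD_h\bfw_h\|_p+c\,h^{\frac1p}\|h^{-1}\jump{\bfw_h\otimes\bfn}\|_{p,\Gamma_h}$.

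Finally, adding $h^{\frac1p}\|h^{-1}\jump{\bfw_h\otimes\bfn}\|_{p,\Gamma_h}$ to both sides of each of these two inequalities and recalling the definition $\|\bfw_h\|_{\bfD,p,h}=\|\bfD_h\bfw_h\|_p+h^{\frac1p}\|h^{-1}\jump{\bfw_h\otimes\bfn}\|_{p,\Gamma_h}$ from \eqref{eq:2.29}, one obtains \eqref{eq:equi2} with constant $c+1$. The dependence of the constant is only on $p$, $k$, and the chunkiness $\omega_0$, inherited entirely from the bound on $\|\Rhk\bfw_h\|_p$; I expect no real obstacle beyond that estimate, which is, however, genuinely the only place where the DG structure (inverse and trace inequalities, shape-regularity) enters.
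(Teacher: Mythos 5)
Your proof is correct and is essentially the standard argument: the paper itself only cites \cite[Proposition 2.5]{kr-pnse-ldg-1} here, and that proof rests on exactly the two ingredients you identify, namely the identity $\Dhk\bfw_h=\bfD_h\bfw_h-[\Rhk\bfw_h]^{\textup{sym}}$ (with $\abs{[\Rhk\bfw_h]^{\textup{sym}}}\le\abs{\Rhk\bfw_h}$) together with the stability bound $\|\Rhk\bfw_h\|_p\le c\,h^{\frac1p}\|h^{-1}\jump{\bfw_h\otimes\bfn}\|_{p,\Gamma_h}$ from \cite{dkrt-ldg}, followed by the triangle inequality. Nothing is missing.
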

	\begin{proof}
		See \cite[Proposition 2.5]{kr-pnse-ldg-1}.
	\end{proof}
	
\subsubsection{\!\!DG \hspace*{-0.1mm}divergence \hspace*{-0.15mm}operator}
	\!\!For \hspace{-0.15mm}every \hspace{-0.15mm}$\bfw_h\!\in\!\WDG$, \hspace{-0.15mm}we \hspace{-0.15mm}denote~\hspace{-0.15mm}by~\hspace{-0.15mm}$\textup{div}_h\bfw_h$ $\coloneqq  \text{tr}(\nabla_h \bfw_h)\in L^p(\Omega)$, the {local divergence}. 
	In addition, for every $k\in \setN_0$,~the~{DG~divergence~operator} 
	$\Divhk\colon\WDG\to L^p(\Omega)$, for every $\bfw_h\in \WDG$, is~defined~via $\smash{\Divhk\bfw_h\coloneqq \text{tr}(\Ghk\bfw_h)
		=\text{tr}(\Dhk\bfw_h)\in Q_h^k}$,~i.e., 
	for every 
	$z_h  \in \smash{Q_h^k}$, we~have~that
	\begin{align*}
	    \smash{\big(\Divhk\bfw_h,z_h\big)
	    =(\textup{div}_h\bfw_h,z_h)
		-\langle \llbracket \bfw_h\cdot\bfn\rrbracket,\{z_h\}\rangle_{\Gamma_h}\,.}
	\end{align*}
    Therefore,  for every $\bfv\in W^{1,p}_0(\Omega)$~and~$z_h\in \Qhkc$, we have that
\begin{align}
  \label{eq:div-dg}
  \begin{aligned}
    \smash{\big(\Divhk \PiDG \bfv,z_h\big)=-( \bfv,\nabla z_h)=(\divo\bfv, z_h)\,.}
\end{aligned}
\end{align}

\section{Fluxes and LDG formulations}\label{sec:ldg}
	
To obtain the LDG formulation of~\eqref{eq:p-navier-stokes}~for
$k \in \setN$, we proceed as in Part I \cite[Sec. 3]{kr-pnse-ldg-1} to get
the discrete counterpart~of~\mbox{Problem}~(Q). 
   Recall that, restricting ourselves to the case that $q_h\in \Qhkco\coloneqq\Qhko\cap C^0(\overline{\Omega})$,~the~numerical fluxes are, for every stabilization
   parameter $\alpha>0$, defined~via
   \begin{align}
   	\label{def:flux-v1}
   	\flux{\bfv}_{h,\sigma}(\bfv_h) &\coloneqq  
   	\begin{cases}
   		\avg{\bfv_h} &\text{on $\Gamma_h^{i}$}
   		\\
   		\bfv^* &\text{on $\partial\Omega$}
   	\end{cases}\,,
   	\quad \flux{\bfv}_{h,q}(\bfv_h) \coloneqq  
   	\begin{cases}
   		\avg{\bfv_h} &\text{on $\Gamma_h^{i}$}\\
   		\bfv^* &\text{on $\partial\Omega$}
   	\end{cases}\,,\\[-0.25mm]
   	\label{def:flux-q}
   	\flux{q}(q_h) &\coloneqq 
   	q_h 
   	\quad \text{on $\Gamma_h$}\,,\\[-0.25mm]
   	\label{def:flux-S}
   	\flux{\bfS}(\bfv_h, \bfS_h,\bfL_h) &\coloneqq 
   	\avg{\bfS_h} \hspace*{-0.1em}- \hspace*{-0.1em}\alpha\, \SSS_{\smash{\sss}}\big(h^{-1}\jump{(\bfv_h     \hspace*{-0.1em}-\hspace*{-0.1em} \bfv_0^*)\hspace*{-0.1em}\otimes\hspace*{-0.1em} \bfn}\big)
   	\quad \text{on $\Gamma_h$}\,,\\[-0.25mm]
   	\label{def:flux-K}
   	\flux{\bfK}(\bfv_h) &\coloneqq 
   	\avg{\bfK_h} 
   	\quad \text{on $\Gamma_h$}\,,\\[-0.25mm]
   	\label{def:flux-F}
   	\flux{\bfG}(\PiDG\bfG) &\coloneqq  
   	\bigavg{\PiDG \bfG} \quad\text{on $\Gamma_h$}\,,
   \end{align}
   where the operator $\SSS_{\smash{\sss}}$ is defined as in
   \eqref{eq:flux}.  Thus we arrive~at~an~inf-sup stable system
   without using a pressure stabilization. It is also possible
     to work with a
   discontinuous pressure. In this case one has to modify the fluxes as follows:
   $\flux{q}(q_h) \coloneqq  \avg{q_h} $~on~$\Gamma_h$~and
   $\flux{\bfv}_{h,q}(\bfv_h) \coloneqq 
   \avg{\bfv_h}+h\jump{q_h\bfn}$ on $\Gamma_h^{i}$,
   $\flux{\bfv}_{h,q}(\bfv_h) \coloneqq  \bfv^* $ on
   $\partial\Omega$.

    As in Part I of the paper (cf.~\cite{kr-pnse-ldg-1}), we arrive at the {flux
      formulation} of~\eqref{eq:p-navier-stokes}, which reads:
    For given $\bfg\in \smash{L^{p'}(\Omega)} $, find
    $(\bfL_h,\bfS_h,\bfK_h,\bfv_h,q_h)^\top \in \Xhk \times
    \Xhk\times\Xhk\times\Vhk \times \Qhkco$ such that for all
    $(\bfX_h,\bfY_h,\bfZ_h,\bfz_h,z_h)^\top$
    $ \in \Xhk \times \Xhk\times\Xhk\times\Vhk \times \Qhkc$, it holds
    \begin{align}
        \hskp{\bfL_h}{\bfX_h} &= \bighskp{\Ghk \bfv_h }{
          \bfX_h}\,,  \notag
        \\
        \hskp{\bfS_h}{\bfY_h} &= \hskp{\SSS(\bfL_h^{\textup{sym}})}{
          \bfY_h}\,, \notag
           \\
       \label{eq:DG} \hskp{\bfK_h}{\bfZ_h} &= \hskp{\bfv_h\otimes \bfv_h}{
          \bfZ_h}\,, 
        \\[-0.5mm]
        \bighskp{\bfS_h-\tfrac{1}{2}\bfK_h-q_h\mathbf{I}_d}{\Dhk \bfz_h} &=
        \hskp{\bfg-\tfrac{1}{2}\bfL_h\,\bfv_h}{\bfz_h} \notag
        \\[-0.5mm]
        &\quad - \alpha \,\bigskp{\SSS_{\smash{\avg{\abs{\Pia\bfL_h^{\textup{sym}}}}}}(h^{-1} \jump{\bfv_h \otimes
            \bfn})}{ \jump{\bfz_h \otimes \bfn}}_{\Gamma_h}\,, \notag\\[-1.5mm]
        \bighskp{\Divhk \bfv_h}{z_h}&=0\,.  \notag
    \end{align}  
    
    Now, we eliminate in the system~\eqref{eq:DG} the
    variables $ \bfL_h\hspace{-0.15em}\in\hspace{-0.15em} \Xhk$, $ \bfS_h\hspace{-0.15em}\in\hspace{-0.15em}     \Xhk$~and~${\bfK_h\hspace{-0.15em}\in\hspace{-0.15em} \Xhk}$ to derive a system only expressed in
    terms of the two variables ${\bfv_h\in\Vhk}$~and~${q_h\in
      \Qhkco}$. To this end, we observe that it follows from \eqref{eq:DG}$_{1,2,3}$ that
      $\bfL_h= \Ghk \bfv_h + \Rhk\bfv^*$, $\bfL_h^{\textup{sym}}=\Dhk\bfv_h$,  $\bfS_h=\PiDG\SSS(\bfL_h^{\textup{sym}})$, $\bfK_h= \PiDG
      (\bfv_h \otimes \bfv_h)$.
    If we insert this into \eqref{eq:DG}$_4$,
    we get the discrete counterpart~of~Problem~(Q):
    
    \textbf{Problem (Q$_h$).} For given $\bfg\in L^{p'}(\Omega)$, find $(\bfv_h,q_h)^\top\!\in\! \Vhk\times \Qhkco$ such~that~for~all $ (\bfz_h,z_h)^\top \in \Vhk\times\Qhkco$, it holds
    \begin{align}
        \bighskp{\SSS(\Dhk \bfv_h)-\tfrac{1}{2}\bfv_h\otimes \bfv_h-q_h\mathbf{I}_d}{\Dhk
          \bfz_h}\label{eq:primal1}
        &=     \bighskp{\bfg-\tfrac{1}{2}[\Ghk \bfv_h]\bfv_h}{\bfz_h}
        \\[-0.5mm]
        &\quad - \alpha \big\langle\SSS_{\smash{\sssl}}(h^{-1} \jump{\bfv_h\otimes\notag
            \bfn}), \jump{\bfz_h \otimes \bfn}\big\rangle_{\Gamma_h}\!,\\[-1.5mm]
      \bighskp{\Divhk \bfv_h}{z_h}&=0 \notag\,.
    \end{align}   
    Next, we eliminate in the system~\eqref{eq:primal1}, the
    variable $q_h\!\in\! \Qhkco$ to derive~a~system~only~expressed in terms of the single variable $\bfv_h\in \Vhk$. To this end, we introduce the space
    \begin{align*}
        V_h^k(0)\coloneqq \big\{\bfv_h\in V_h^k\mid \bighskp{\Divhk \bfv_h}{z_h}=0\textup{ for all }z_h\in     \Qhkc\big\}\,.
    \end{align*}
    Consequently, since $\hskp{z_h\mathbf{I}_d}{\Dhk
          \bfz_h}=\hskp{z_h}{\Divhk
          \bfz_h}=0$ for all $z_h\in \Qhkc$ and $\bfz_h\in V_h^k(0)$,
    we~get~the~discrete counterpart of Problem (P):
    
    \textbf{Problem (P$_h$).} For given $\bfg\!\in\! L^{p'}(\Omega)$, find $\bfv_h\!\in\! V_h^k(0)$ such that~for~all~${\bfz_h\!\in\! V_h^k(0)}$, it holds
    \begin{align} 
          \bighskp{\SSS(\Dhk \bfv_h)-\tfrac{1}{2}\bfv_h\otimes \bfv_h}{\Dhk
          \bfz_h}&=     \bighskp{\bfg-\tfrac{1}{2}[\Ghk \bfv_h]\bfv_h}{\bfz_h}\label{eq:primal2}\\[-0.5mm]
       &\quad - \alpha \big\langle\SSS_{\smash{\sssl}}(h^{-1} \jump{\bfv_h\otimes
            \bfn}), \jump{\bfz_h \otimes \bfn}\big\rangle_{\Gamma_h}\,.\notag
    \end{align}
    Problem (Q$_h$) and Problem (P$_h$) are called {primal formulations} of
    the system \eqref{eq:p-navier-stokes}. 

    Well-posedness (i.e., solvability), stability (i.e., a priori
    estimates), and (weak) convergence of Problem (Q$_h$) and Problem
    (P$_h$) are proved in Part~I~of the paper~(cf.~\cite{kr-pnse-ldg-1}).
    
\section{Convergence rates for the pressure}\label{sec:rates}

Let us start with the main result of this paper:
\begin{theorem}
  \label{thm:error}
  Let $\SSS$ satisfy Assumption~\ref{assum:extra_stress} with
  $p\in(2,\infty)$ and $\delta> 0$,~let~$k\in \mathbb{N}$, and let
  $\bfg\in L^{p'}(\Omega)$. Moreover, let
  $(\bfv,q)^\top \in \Vo(0)\times \Qo$ be a solution of
  Problem~(Q) (cf.~\eqref{eq:q1}, \eqref{eq:q2}) with
  $\bfF(\bfD \bfv) \in W^{1,2}(\Omega)$ 
  and let $(\bfv_h,q_h)^\top \in \Vhk(0)\times \Qhkco$ be a solution
  of Problem (Q$_h$) (cf.~\eqref{eq:primal1}) for $\alpha>0$.   Then,
  there exists a constant $c_0 >0$, depending only on the characteristics of
  $\SSS$, $\delta^{-1}$, 
  $\omega_0$, 
  $\alpha^{-1}$,~and~${k}$,~such~that if $\norm{\nabla
    \bfv}_2\le c_0$, then, it holds
  \begin{align*}
    \|q_h-q\|_{p'}\leq c\, h 
    +c\,\big(\rho_{\smash{(\varphi_{\vert \bfD\bfv\vert})^*,\Omega}}(h\,\nabla q)\big)^{\smash{\frac{1}{2}}} 
  \end{align*}
  with a constant $c>$ depending only on the characteristics of
  $\SSS$, $\norm{\bfF(\bfD\bfv)}_{1,2}$, $\norm{\nabla q}_{p'}$,
  $\norm{\bfg}_{p'}$, 
  $\delta^p\abs{\Omega}$, $\delta^{-1}$, 
  $\omega_0$, 
  $\alpha^{-1}$,  $k$, 
  and $c_0$.
\end{theorem}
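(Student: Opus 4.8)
\emph{Overview.} The plan is to combine the discrete inf--sup stability of the pair $(\Vhk,\Qhkco)$ with the velocity error estimate of Part~II (cf.~\cite{kr-pnse-ldg-2}) and a consistency argument for the continuous solution. Write $E_h\coloneqq\norm{\bfF(\Dhk\bfv_h)-\bfF(\bfD\bfv)}_2$. The error estimate of Part~II is valid precisely under the hypotheses of the present theorem (in particular $\norm{\nabla\bfv}_2\le c_0$) and yields $E_h^2\le c\,h^2+c\,\rho_{(\varphi_{\abs{\bfD\bfv}})^*,\Omega}(h\nabla q)$, hence $E_h\le c\,h+c\,(\rho_{(\varphi_{\abs{\bfD\bfv}})^*,\Omega}(h\nabla q))^{1/2}$; the same bound holds for the jump part $(h\,\rho_{\varphi_{\abs{\bfD\bfv}},\Gamma_h}(h^{-1}\jump{\bfv_h\otimes\bfn}))^{1/2}$ and, using $\delta>0$, for the $L^2$-type velocity errors $\norm{\Dhk\bfv_h-\bfD\bfv}_2$ and $\norm{\bfv_h-\bfv}_{\nabla,2,h}$. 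Finally, from $\bfF(\bfD\bfv)\in W^{1,2}(\Omega)$, $p>2$, the pointwise bound $\abs{\nabla\SSS(\bfD\bfv)}\lesssim\abs{\nabla\bfF(\bfD\bfv)}(\delta+\abs{\bfD\bfv})^{(p-2)/2}$, Hölder's and Sobolev's inequalities, and the momentum equation \eqref{eq:q1}, one obtains the regularity $\SSS(\bfD\bfv),\bfv\otimes\bfv,q\in W^{1,p'}(\Omega)$.

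\emph{Reduction and error identity.} By the continuous solvability of the divergence equation, \eqref{eq:div-dg}, and the Fortin property of $\PiDG$ (cf.~Part~I, \cite{kr-pnse-ldg-1}), there is $\beta>0$, independent of $h$, with $\norm{z_h}_{p'}\le\beta^{-1}\sup_{\bfz_h\in\Vhk\setminus\{\mathbf0\}}\hskp{\Divhk\bfz_h}{z_h}\,\norm{\bfz_h}_{\nabla,p,h}^{-1}$ for all $z_h\in\Qhkco$. Choosing a mean-value preserving quasi-interpolant $\mathcal J_hq\in\Qhkco$ with $\norm{q-\mathcal J_hq}_{p'}\le c\,h\norm{\nabla q}_{p'}$, it suffices by the triangle inequality to bound $\norm{q_h-\mathcal J_hq}_{p'}$ via this inf--sup condition with $z_h\coloneqq q_h-\mathcal J_hq$. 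Fixing $\bfz_h\in\Vhk$ and using $\hskp{\Divhk\bfz_h}{q_h}=\hskp{q_h\mathbf I_d}{\Dhk\bfz_h}$ together with Problem~(Q$_h$) \eqref{eq:primal1}, I express $\hskp{\Divhk\bfz_h}{q_h}$ through $\SSS(\Dhk\bfv_h)$, the discrete skew-symmetrised convective form, $\bfg$, and the stabilisation term. Using the pointwise form of \eqref{eq:q1} (licit by the regularity above), $[\nabla\bfv]\bfv=\divo(\bfv\otimes\bfv)$, elementwise integration by parts, the symmetry of $\SSS(\bfD\bfv)$ and $\bfv\otimes\bfv$, the relations \eqref{eq:2.24}, the $L^2$-orthogonality of $\PiDG$, and $\Ghk\bfv=\nabla\bfv$ (as $\jump{\bfv\otimes\bfn}=\mathbf0$), I obtain an analogous representation of $\hskp{\Divhk\bfz_h}{q}$ with the same convective form evaluated at $\bfv$, plus a consistency functional $\mathcal C_h(\bfz_h)$ collecting face terms of the type $\skp{\jump{\bfz_h\otimes\bfn}}{\avg{(\mathrm{id}-\PiDG)w}}_{\Gamma_h}$ ($w\in\{\SSS(\bfD\bfv),\bfv\otimes\bfv\}$) and $\skp{\jump{\bfz_h\cdot\bfn}}{\avg{(\mathrm{id}-\PiDG)q}}_{\Gamma_h}$. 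Subtracting (the $\bfg$-terms cancel) exhibits $\hskp{\Divhk\bfz_h}{q_h-\mathcal J_hq}$ as a sum of: (a)~$\hskp{\SSS(\Dhk\bfv_h)-\SSS(\bfD\bfv)}{\Dhk\bfz_h}$; (b)~a convective difference linear in $\bfv_h-\bfv$; (c)~$\alpha\skp{\SSS_{\smash{\sssl}}(h^{-1}\jump{\bfv_h\otimes\bfn})}{\jump{\bfz_h\otimes\bfn}}_{\Gamma_h}$; (d)~$\mathcal C_h(\bfz_h)$; (e)~$\hskp{\Divhk\bfz_h}{q-\mathcal J_hq}$.

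\emph{Estimates.} I would bound (a) by the generalised Hölder inequality \eqref{eq:gen_hoelder} with the generalised N-function $\varphi_{\abs{\bfD\bfv}}$: Proposition~\ref{lem:hammer} and the change of shift (Lemma~\ref{lem:shift-change}) give $\rho_{(\varphi_{\abs{\bfD\bfv}})^*,\Omega}(\abs{\SSS(\Dhk\bfv_h)-\SSS(\bfD\bfv)})\lesssim E_h^2$ and hence, by the $\Delta_2$-property of $(\varphi_{\abs{\bfD\bfv}})^*$, $\norm{\SSS(\Dhk\bfv_h)-\SSS(\bfD\bfv)}_{(\varphi_{\abs{\bfD\bfv}})^*}\lesssim E_h$; the bound $\varphi_a(t)\lesssim(\delta+a)^{p-2}t^2+t^p$ together with Propositions~\ref{prop:equivalences} and \ref{korn} gives $\norm{\Dhk\bfz_h}_{\varphi_{\abs{\bfD\bfv}}}\le c\,\norm{\bfz_h}_{\nabla,p,h}$ with $c$ depending on $\norm{\bfF(\bfD\bfv)}_{1,2}$, $\delta^p\abs\Omega$, $\delta^{-1}$; thus (a)$\le c\,E_h\norm{\bfz_h}_{\nabla,p,h}$. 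For (b), expanding into terms linear in $\bfv_h-\bfv$ resp.\ $\Dhk\bfv_h-\bfD\bfv$, Hölder's inequality, the discrete Sobolev embedding of Proposition~\ref{prop:emb} (subcritical since $p>2$, $d\le3$), the Part~I a priori bound on $\norm{\bfv_h}_{\nabla,p,h}$, the regularity bound on $\norm{\bfv}_{1,p}$, and the $L^2$-type velocity error give (b)$\le c\,(E_h+h)\norm{\bfz_h}_{\nabla,p,h}$. For (c), since $\jump{\bfv_h\otimes\bfn}=\jump{(\bfv_h-\bfv)\otimes\bfn}$, the generalised Hölder inequality on $\Gamma_h$, Remark~\ref{rem:sa}, and the jump part of the velocity error give (c)$\le c\,\alpha(E_h+h)\norm{\bfz_h}_{\nabla,p,h}$. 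For (d) and (e), the face Hölder bound $\abs{\skp{\jump{\bfz_h\otimes\bfn}}{g}_{\Gamma_h}}\le h^{1-1/p}\norm{\bfz_h}_{\nabla,p,h}\norm{g}_{p',\Gamma_h}$, the scaled trace inequality, the approximation properties of $\PiDG$ (applicable since $\SSS(\bfD\bfv),\bfv\otimes\bfv,q\in W^{1,p'}(\Omega)$), and $\norm{\Divhk\bfz_h}_p\le c\norm{\bfz_h}_{\nabla,p,h}$ give (d)$+$(e)$\le c\,h\norm{\bfz_h}_{\nabla,p,h}$. Dividing by $\norm{\bfz_h}_{\nabla,p,h}$, taking the supremum, invoking the inf--sup condition, and adding $\norm{q-\mathcal J_hq}_{p'}\le c\,h\norm{\nabla q}_{p'}$ yields $\norm{q_h-q}_{p'}\le c\,(E_h+h)$, which together with the bound on $E_h$ from the Overview is the asserted estimate.

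\emph{Main obstacle.} The delicate point is term~(a): one has to measure $\SSS(\Dhk\bfv_h)-\SSS(\bfD\bfv)$ in the dual shifted Orlicz space $L^{(\varphi_{\abs{\bfD\bfv}})^*}(\Omega)$ — transferring the shift $\abs{\Dhk\bfv_h}$ provided by Proposition~\ref{lem:hammer} to the shift $\abs{\bfD\bfv}$ via Lemma~\ref{lem:shift-change} and absorbing an $\varepsilon E_h^2$ — while at the same time controlling the test function $\Dhk\bfz_h$ in $L^{\varphi_{\abs{\bfD\bfv}}}(\Omega)$ by the DG norm; this is exactly where $\delta>0$ and the quantities $\norm{\bfF(\bfD\bfv)}_{1,2}$, $\delta^p\abs\Omega$ enter the constant. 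A secondary difficulty is the regularity bootstrap $\bfF(\bfD\bfv)\in W^{1,2}(\Omega)\Rightarrow\SSS(\bfD\bfv),\bfv\otimes\bfv,q\in W^{1,p'}(\Omega)$, needed to legitimise the elementwise integration by parts and the consistency estimate, and ensuring the convective difference in (b) is genuinely $O(E_h+h)$ and not $O(E_h^{2/p}+h)$ — which forces the use of $L^2$-based rather than $L^p$-based velocity error bounds and thus relies on $\delta>0$ once more.
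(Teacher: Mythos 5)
Your proposal is correct and follows essentially the same route as the paper: discrete inf--sup stability, the error identity obtained by subtracting the consistency relation of the exact solution from Problem~(Q$_h$) (your terms (a)--(e) are exactly the paper's $I_1,\dots,I_6$ plus the pressure-interpolation term), the velocity error estimate of Part~II under the smallness condition, and the conversion of modular bounds into shifted Luxembourg-norm bounds, which the paper isolates as Lemma~\ref{lem:just_to_mighty}. The only cosmetic difference is that you bound the consistency face terms for $\SSS(\bfD\bfv)$ and $q$ via their $W^{1,p'}$-regularity and plain Lebesgue-norm H\"older, whereas the paper stays in the shifted Orlicz framework throughout; both yield the asserted bounds.
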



\begin{corollary}\label{cor:error}
  Let the assumptions of Theorem \ref{thm:error} be satisfied. Then,~it~holds
  \begin{align}\label{eq:p'1}
    \|q_h-q\|_{p'}\leq 
    c\,h^{\smash{\frac{p'}{2}}} 
  \end{align}
  with a constant $c>0$ depending only on the characteristics of
  $\SSS$, $\norm{\bfF(\bfD\bfv)}_{1,2}$,~$\norm{\nabla q}_{p'}$, $\norm{\bfg}_{p'}$, 
  $\delta^p\abs{\Omega}$, $\delta^{-1}>0$, 
  $\omega_0$, 
  $\alpha^{-1}$,  $k$, 
  and $c_0$.
  If, in addition, $\bfg\in L^2(\Omega)$, then
  \begin{align}\label{eq:p'a1}
    \|q_h-q\|_{p'}\leq c\, h 
  \end{align}
  with a constant $c>0$ depending only on the characteristics of
  $\SSS$, $\norm{\bfF(\bfD\bfv)}_{1,2}$,  $\norm{\bfg}_{p'}$, $\norm{(\delta+\vert
    \bfD\bfv\vert)^{\smash{\frac{2-p}{2}}}\nabla q}_2$, 
  $\delta^p\abs{\Omega}$, $\delta^{-1}$, 
  ${\omega_0}$, 
  $\alpha^{-1}$,  $k$, 
  and $c_0$.
\end{corollary}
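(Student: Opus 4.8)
The plan is to obtain Corollary~\ref{cor:error} as a direct consequence of Theorem~\ref{thm:error}: the only work is to estimate the modular term $\rho_{\smash{(\varphi_{\vert\bfD\bfv\vert})^*,\Omega}}(h\,\nabla q)$ on its right-hand side, in two different ways. First I would record the precise scaling of the conjugate of a shifted N-function $\varphi_a$ of $\varphi=\varphi_{p,\delta}$: since $p\in(2,\infty)$, i.e., $p'\in(1,2)$, the explicit form \eqref{eq:def_phi}, \eqref{eq:phi_shifted} (cf.~also \cite{die-ett,dr-nafsa}) yields, uniformly in $a,t\ge 0$, that $(\varphi_a)^*(t)\sim\big((\delta+a)^{p-1}+t\big)^{p'-2}t^2$; since $p'-2<0$, bounding the bracket from below by $t$ and by $(\delta+a)^{p-1}$ respectively gives the two estimates $(\varphi_a)^*(t)\le c\,t^{p'}$ and $(\varphi_a)^*(t)\le c\,(\delta+a)^{2-p}t^2$, with $c=c(p)$.

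For \eqref{eq:p'1}, I would apply the first estimate with $a=\vert\bfD\bfv\vert$ and $t=h\,\vert\nabla q\vert$ and integrate over $\Omega$, obtaining $\rho_{\smash{(\varphi_{\vert\bfD\bfv\vert})^*,\Omega}}(h\,\nabla q)\le c\,h^{p'}\|\nabla q\|_{p'}^{p'}$; inserting this into Theorem~\ref{thm:error} gives $\|q_h-q\|_{p'}\le c\,h+c\,h^{\smash{p'/2}}\|\nabla q\|_{p'}^{\smash{p'/2}}$, and since $h\le 1$ and $p'/2<1$ imply $h\le h^{\smash{p'/2}}$, the linear term is absorbed and \eqref{eq:p'1} follows.

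For \eqref{eq:p'a1}, I would use instead the second estimate, which gives $\rho_{\smash{(\varphi_{\vert\bfD\bfv\vert})^*,\Omega}}(h\,\nabla q)\le c\,h^{2}\,\|(\delta+\vert\bfD\bfv\vert)^{\smash{(2-p)/2}}\nabla q\|_{2}^{2}$, so that Theorem~\ref{thm:error} yields $\|q_h-q\|_{p'}\le c\,h$ — provided the weighted $L^2$-norm of $\nabla q$ on the right-hand side is finite. This is the only place the additional hypothesis $\bfg\in L^2(\Omega)$ enters: it serves to upgrade $\nabla q$ from $L^{p'}(\Omega)$ (all that Problem~(Q) guarantees a priori, as $p'<2$) to $L^{2}(\Omega)$; once this is known, finiteness is immediate since $\delta>0$ makes the weight bounded, $(\delta+\vert\bfD\bfv\vert)^{2-p}\le\delta^{2-p}$, whence $\|(\delta+\vert\bfD\bfv\vert)^{\smash{(2-p)/2}}\nabla q\|_{2}\le\delta^{\smash{(2-p)/2}}\|\nabla q\|_{2}$. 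Concretely I would cite the weighted pressure-regularity estimate for systems with $(p,\delta)$-structure and $\bfg\in L^2(\Omega)$ (in the spirit of \cite{bdr-phi-stokes}), which, under the standing regularity $\bfF(\bfD\bfv)\in W^{1,2}(\Omega)$ and the smallness of $\|\nabla\bfv\|_2$, bounds $\|(\delta+\vert\bfD\bfv\vert)^{\smash{(2-p)/2}}\nabla q\|_{2}$ by the data.

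The main (and essentially only non-routine) obstacle is precisely this last point: the weighted $L^2$-estimate for $\nabla q$ under $\bfg\in L^2(\Omega)$ is a genuine regularity statement for the continuous problem that does not follow from Sobolev embeddings alone and must be imported from the regularity theory. Everything else is bookkeeping — combining Theorem~\ref{thm:error} with the two elementary estimates for $(\varphi_a)^*$ and checking that the resulting constants retain the dependencies asserted in the statement, in particular the dependence on $\delta^{-1}$, which enters both through Theorem~\ref{thm:error} and through the weight bound above.
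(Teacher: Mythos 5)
Your proposal is correct and follows essentially the same route as the paper: both parts reduce to the two elementary bounds $(\varphi_a)^*(h\,t)\le c\,h^{p'}\,(\varphi_a)^*(t)$ and $(\varphi_a)^*(h\,t)\le c\,(\delta+a)^{2-p}h^2t^2$ for $p>2$, combined with the absorption $h\le h^{\smash{p'/2}}$ for $h\le 1$ and, for the second estimate, the weighted pressure regularity imported from \cite[Lemma 2.6]{kr-pnse-ldg-2}. The only small inaccuracy is your side remark that $\bfg\in L^2(\Omega)$ ``upgrades'' $\nabla q$ to $L^2(\Omega)$: the cited regularity theory yields the weighted bound $(\delta+\vert\bfD\bfv\vert)^{2-p}\vert\nabla q\vert^2\in L^1(\Omega)$ directly (cf.~Remark \ref{rem:best}) rather than $\nabla q\in L^2(\Omega)$, which need not follow since the weight can degenerate where $\vert\bfD\bfv\vert$ is large; but since you ultimately invoke exactly that weighted estimate, this does not affect the argument.
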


  \begin{remark}\label{rem:best}
    {\rm
   (i) Note that, in view of \cite[Lemma 2.6]{kr-pnse-ldg-2}, in Theorem \ref{thm:error} the assumption
    $\bfg\in L^{p'}(\Omega)$ is equivalent to
    $\nabla q\in L^{p'}(\Omega)$ and in Corollary \ref{cor:error}, the
    assumption $\bfg\in L^2(\Omega)$ can be replaced by
    $(\delta+\abs{\bfD\bfv})^{2-p}\vert \nabla q\vert^2\in
    L^1(\Omega)$.

    (ii) To the best of the authors knowledge the results in Theorem
    \ref{thm:error} and Corollary~\ref{cor:error} are the first
    convergence results for the pressure for the $p$-Navier--Stokes
    equations~\eqref{eq:p-navier-stokes} both for DG methods and FE
    methods.}
  \end{remark}

  Due to the modified numerical flux \eqref{def:flux-S}, the error
analysis can no longer be performed in terms of modulars as in
\cite{bdr-phi-stokes}, but in terms of Luxembourg
norms~only. The reason for that is that estimates in modulars
  are usually proved by the additive Young's inequality, while for
  estimates in norms we can use the multiplicative H\"older's
  inequality. Thus, certain terms which can not be absorbed in the
  modular appear as a bounded factor in the corresponding norm estimate.
Since all error estimates proved in \cite{kr-pnse-ldg-2} are
formulated in terms of modulars, we need to translate them in terms of
Luxembourg norms. The following lemma helps~us~to~do~this.

\begin{lemma}\label{lem:just_to_mighty}
    Let $\phi\colon\mathbb{R}^{\ge 0}\to \mathbb{R}^{\ge 0}$ be defined by \eqref{eq:def_phi} for $p\in [2,\infty)$ and $\delta\ge 0$ and let $a\in L^p(\Omega)$ with $a\ge 0$ a.e. in $\Omega$. Then, the following statements apply:
    \begin{itemize}
        \item[(i)] For every $c_0\ge 1$, $\gamma\leq 1$, and $f\in L^p(\Omega)$ from $\rho_{\varphi_a,\Omega}(f)\leq c_0\,\gamma$, 
    it follows~that $\smash{\|f\|_{\varphi_a}\leq (2c_0)^{\frac{1}{2}}\,\Delta_2(\varphi)\, \gamma^{\frac{1}{p}}}$.
    
    \item[(ii)] For every $c_0\ge 1$, $\gamma\leq 1$, and $g\in L^{p'}(\Omega)$ from $\rho_{(\varphi_a)^*,\Omega}(g)\leq c_0\, \gamma$, 
    it follows~that $\smash{\|g\|_{(\varphi_a)^*}\leq (c_0\,c(p))^{\smash{\frac{1}{p'}}} \gamma^{\frac{1}{2}}}$.
    \end{itemize}
\end{lemma}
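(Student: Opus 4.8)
The plan is to reduce both statements to the elementary norm--modular relation in (generalized) Orlicz spaces, using only the scaling (index) properties of the $(p,\delta)$-structure for $p\ge 2$. The single input is: since $\varphi_a=\varphi_{p,\delta+a}$ pointwise a.e.\ in $\Omega$ and $p\ge 2$, the generalized N-function $\varphi_a$ satisfies, uniformly in $x\in\Omega$,
\begin{align*}
  \varphi_a(\lambda\, t)\le \lambda^2\,\varphi_a(t)\ \ (\lambda\in[0,1]),\qquad
  \varphi_a(\lambda\, t)\le \lambda^p\,\varphi_a(t)\ \ (\lambda\ge 1)\,,
\end{align*}
i.e.\ $t\mapsto \varphi_a(t)/t^2$ is non-decreasing and $t\mapsto\varphi_a(t)/t^p$ is non-increasing. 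The first follows from $t\,\varphi_a'(t)=(\delta+a+t)^{p-2}t^2\ge 2\int_0^t(\delta+a+s)^{p-2}s\,\mathrm ds=2\varphi_a(t)$ (as $s\mapsto(\delta+a+s)^{p-2}$ is non-decreasing for $p\ge 2$); the second from $r\,\varphi_a''(r)/\varphi_a'(r)=\frac{(p-1)r+\delta+a}{r+\delta+a}\le p-1$, integrated. By the standard duality of indices of Young conjugates (cf.\ \cite{die-ett,dr-nafsa}), $(\varphi_a)^*$ then satisfies, uniformly in $x$,
\begin{align*}
  (\varphi_a)^*(\lambda\, s)\le \lambda^{p'}(\varphi_a)^*(s)\ \ (\lambda\in[0,1]),\qquad
  (\varphi_a)^*(\lambda\, s)\le \lambda^{2}(\varphi_a)^*(s)\ \ (\lambda\ge 1)\,.
\end{align*}

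From this I use the following abstract step: if a generalized N-function $\psi$ satisfies, pointwise, $\psi(\lambda t)\le\lambda^{\alpha}\psi(t)$ for $\lambda\le 1$ and $\psi(\lambda t)\le\lambda^{\beta}\psi(t)$ for $\lambda\ge 1$ with $1\le\alpha\le\beta$, and if $\rho_{\psi,\Omega}(h)\le M$, then $\rho_{\psi,\Omega}(h/\mu)\le \mu^{-\beta}M=1$ for $\mu=M^{1/\beta}$ when $M\le 1$ (here $1/\mu\ge 1$, so $\psi(h/\mu)\le\mu^{-\beta}\psi(h)$), and $\rho_{\psi,\Omega}(h/\mu)\le\mu^{-\alpha}M=1$ for $\mu=M^{1/\alpha}$ when $M>1$ (here $1/\mu\le 1$, so $\psi(h/\mu)\le\mu^{-\alpha}\psi(h)$). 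Hence $\|h\|_\psi\le M^{1/\beta}$ if $M\le 1$ and $\|h\|_\psi\le M^{1/\alpha}$ if $M>1$.

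For (i) apply this with $\psi=\varphi_a$, $(\alpha,\beta)=(2,p)$, $M=c_0\gamma$: if $c_0\gamma\le 1$ then $\|f\|_{\varphi_a}\le(c_0\gamma)^{1/p}=c_0^{1/p}\gamma^{1/p}\le c_0^{1/2}\gamma^{1/p}$, while if $c_0\gamma>1$ then $\|f\|_{\varphi_a}\le(c_0\gamma)^{1/2}=c_0^{1/2}\gamma^{1/2}\le c_0^{1/2}\gamma^{1/p}$ (using $\gamma^{1/2-1/p}\le 1$ since $\gamma\le 1$, $p\ge 2$). In either case, with $c_0\ge 1$ and $\Delta_2(\varphi)\ge 1$, $\|f\|_{\varphi_a}\le c_0^{1/2}\gamma^{1/p}\le(2c_0)^{1/2}\Delta_2(\varphi)\gamma^{1/p}$. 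For (ii) apply the abstract step with $\psi=(\varphi_a)^*$, $(\alpha,\beta)=(p',2)$, $M=c_0\gamma$: if $c_0\gamma\le 1$ then $\|g\|_{(\varphi_a)^*}\le(c_0\gamma)^{1/2}=c_0^{1/2}\gamma^{1/2}\le c_0^{1/p'}\gamma^{1/2}$ (since $c_0\ge1$, $1/p'\ge1/2$), while if $c_0\gamma>1$ then $\|g\|_{(\varphi_a)^*}\le(c_0\gamma)^{1/p'}=c_0^{1/p'}\gamma^{1/p'}\le c_0^{1/p'}\gamma^{1/2}$ (using $\gamma^{1/p'-1/2}\le 1$). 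In either case $\|g\|_{(\varphi_a)^*}\le c_0^{1/p'}\gamma^{1/2}\le(c_0\,c(p))^{1/p'}\gamma^{1/2}$ for any $c(p)\ge 1$.

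The only non-routine point is to secure the scaling/index inequalities for $\varphi_a$ and, especially, for its conjugate $(\varphi_a)^*$ with exponents exactly $2,p$ (resp.\ $p',2$) and with constants independent of $\delta$ and of $a$; this is precisely where $p\ge 2$ and the explicit form $\varphi_a=\varphi_{p,\delta+a}$ enter, and it is a routine consequence of the theory of the $(p,\delta)$-structure (cf.\ Proposition~\ref{lem:hammer}, Lemma~\ref{lem:shift-change} and the references therein). Everything else is bookkeeping of exponents under the standing hypotheses $c_0\ge 1$, $\gamma\le 1$, $p\ge 2$ (hence $p'\le 2$).
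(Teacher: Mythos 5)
Your proof is correct and follows essentially the same route as the paper: both arguments rest on the scaling (index) properties of $\varphi_a$ (lower index $2$, upper index $p$) and of its conjugate (lower index $p'$, upper index $2$), valid uniformly in $x$ because $p\ge 2$, combined with the definition of the Luxembourg norm. The only differences are presentational — you split into the cases $c_0\gamma\le 1$ and $c_0\gamma>1$ and obtain the conjugate's scaling by duality of indices, whereas the paper performs a single two-parameter rescaling $t\mapsto t/(c\lambda)$ with $c\ge 1$, $\lambda\le 1$ and works directly from the explicit formulas for $\varphi_a'$ and $(\varphi_a)^*$.
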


\begin{proof}
    \textit{ad (i)}. Observing that, owing to \cite[Lemma 5.1, (5.11)]{dr-nafsa}, \eqref{eq:phi_shifted}, \eqref{eq:def_phi}, $p\ge 2$, the $\Delta_2$-condition of $\varphi_{a(x)}\colon\mathbb{R}^{\ge 0}\hspace{-0.15em}\to\hspace{-0.15em} \mathbb{R}^{\ge 0}$ for a.e. $x\hspace{-0.15em}\in\hspace{-0.15em} \Omega$ and \cite[Lemma~5.3]{dr-nafsa},~for~every~${\lambda \hspace{-0.15em}\leq\hspace{-0.15em} 1}$, $c\ge 1$,  $t\ge 0$ and for a.e. $x\in \Omega$, it holds
    \begin{align*}
        \varphi_{a(x)}\Big(\frac{t}{c\lambda}\Big)
        &\leq
        (\varphi_{a(x)})'\Big(\frac{t}{c\lambda}\Big)\frac{t}{c\lambda}
        =
        \Big(\delta+a(x)+\frac{t}{c\lambda}\Big)^{p-2}\Big(\frac{t}{c\lambda}\Big)^2
        \\
        &\leq 
        \frac{1}{\lambda^pc^2}(\delta+a(x)+t)^{p-2}t^2
        =
        \frac{1}{\lambda^pc^2}(\varphi_{a(x)})'(t)t
         \\
        &\leq 
        \frac{1}{\lambda^pc^2}\varphi_{a(x)}(2t)
        \leq 
        \frac{\Delta_2(\varphi_{a(x)})}{\lambda^pc^2}\varphi_{a(x)}(t)\leq 
        \frac{2\Delta_2(\varphi)^2}{\lambda^pc^2}\varphi_{a(x)}(t)\,,
    \end{align*}
    choosing $\lambda = \gamma^{\frac{1}{p}}$ and $c=(2c_0)^{\frac{1}{2}}\,\Delta_2(\varphi)$, we find that
    \begin{align*}
        \rho_{\varphi_a,\Omega}\bigg(\frac{f}{(c_0\,\Delta_2(\varphi))^{\frac{1}{2}} \gamma^{\frac{1}{p}}}\bigg)\leq \frac{1}{c_0 \gamma}\rho_{\varphi_a,\Omega}(f)\leq 1\,,
    \end{align*}
    so that, from the definition of the Luxembourg norm, we conclude the assertion.
    
    \textit{ad (ii)}. Observing that, due to $p\ge 2$, for every $\lambda \leq 1$, $c\ge 1$, and $t\ge 0$,~it~holds
    \begin{align*}
        (\varphi_{a(x)})^*\Big(\frac{t}{c\lambda}\Big)
        &\leq
        c(p)
        \Big(\delta^{p-1}+a(x)^{p-1}+\frac{t}{c\lambda}\Big)^{p'-2}\Big(\frac{t}{c\lambda}\Big)^2
        \\
        &\leq 
        \frac{c(p)}{\lambda^2}
        \Big(\frac{\delta^{p-1}}{c}+\frac{a(x)^{p-1}}{c}+\frac{t}{c}\Big)^{p'-2}\Big(\frac{t}{c}\Big)^2
        \\
        &\leq 
        \frac{c(p)}{\lambda^2c^{p'}}
        (\delta^{p-1}+a(x)^{p-1}+t)^{p'-2}t^2
        \\
        &
        \le   \frac{c(p)}{\lambda^2c^{p'}}(\varphi_{a(x)})^*(t)\,,
    \end{align*}
    where $c(p)>0$ depends only on $p\ge 2$,
    choosing $\lambda = \gamma^{\frac{1}{2}}$ and $c=(c_0\,c(p))^{\frac{1}{p'}}$,~we~find~that
    \begin{align*}
        \rho_{(\varphi_a)^*,\Omega}\bigg(\frac{g}{(c_0\,c(p))^{\smash{\frac{1}{p'}}} \gamma^{\frac{1}{2}}}\bigg)\leq \frac{1}{c_0\gamma
        }\rho_{(\varphi_a)^*,\Omega}(g)\leq 1\,,
    \end{align*}
    so that, from the definition of the Luxembourg norm, we conclude the assertion.
\end{proof}

In order to prove the results in Theorem \ref{thm:error} and Corollary \ref{cor:error}, we need to derive a system similar~to~\eqref{eq:primal1},~which~is satisfied by a solution
of our original problem \eqref{eq:p-navier-stokes}.
Using the notation $\bfL=\nabla\bfv $, $\bfS =\SSS(\bfL^{\textup{sym}})$, $\bfK =\bfv\otimes \bfv$,
we find that
${(\bfv, \bfL, \bfS,\bfK)^\top \!\in\! W^{1,p}(\Omega)\times
L^p(\Omega)\times L^{p'}(\Omega)\times L^{p'}(\Omega)}$. If, in
addition,~${\bfS,\bfK,q \!\in\!
W^{1,1}(\Omega)}$, we observe as in \cite{dkrt-ldg}, i.e., using
\mbox{integration-by-parts}, the
\hspace{-0.1mm}projection \hspace{-0.1mm}property \hspace{-0.1mm}of \hspace{-0.1mm}$\PiDG$, \hspace{-0.1mm}the \hspace{-0.1mm}definition \hspace{-0.1mm}of \hspace{-0.1mm}the \hspace{-0.1mm}discrete
\hspace{-0.1mm}gradient \hspace{-0.1mm}and~\hspace{-0.1mm}jump~\hspace{-0.1mm}\mbox{functional}, that
\begin{align}\label{eq:cont}
  \begin{aligned}
    \hskp{\bfL}{\bfX_h} &= \hskp{\nabla\bfv}{ \bfX_h}\,,
    \\
    \hskp{\bfS}{\bfY_h} &= \bighskp{\SSS(\bfL^{\textup{sym}})}{ \bfY_h}\,,
    \\
    \hskp{\bfK}{\bfZ_h} &= \bighskp{\bfv\otimes \bfv}{ \bfZ_h}\,,
    \\
    \bighskp{\bfS-\tfrac{1}{2}\bfK-q\mathbf{I}_d}{\Dhk \bfz_h} &= \hskp{\bfg-\tfrac{1}{2}\bfL\bfv}{\bfz_h} +
      \bigskp{\avg{\bfS}-\bigavg{\PiDG\bfS}}{\jump{\bfz_h\otimes
          \bfn}}_{\Gamma_h}\\&\quad+
          \tfrac{1}{2}\bigskp{\bigavg{\PiDG\bfK}-\avg{\bfK}}{\jump{\bfz_h\otimes
          \bfn}}_{\Gamma_h}
          \\&\quad+
          \bigskp{\bigavg{\PiDG(q\mathbf{I}_d)}-\avg{q\mathbf{I}_d}}{\jump{\bfz_h\otimes
          \bfn}}_{\Gamma_h}\,,
    \\
    \hskp{\Divhk\bfv}{\bfz_h} &= 0
            \end{aligned}
\end{align}  
is satisfied for all $(\bfX_h,\bfY_h,\bfZ_h,\bfz_h)^\top \in \Xhk \times \Xhk\times \Xhk \times \Vhk$.
As a result, using \eqref{eq:cont}, \eqref{eq:primal1} and
\eqref{eq:div-dg}, we~arrive~at
\begin{align}
     &\bighskp{\SSS(\Dhk \bfv_h) - \SSS(\bfD
      \bfv)}{\Dhk \bfz_h}
    + \alpha \big\langle\SSS_{\smash{\sssl}}(h^{-1} \jump{\bfv_h\otimes
        \bfn}), \jump{\bfz_h \otimes \bfn}\big\rangle_{\Gamma_h}\label{eq:errorprimal}\\
    &=\hskp{q_h-q}{\Divhk \bfz_h}+b_h(\bfv,\bfv,\bfz_h)-b_h(\bfv_h,\bfv_h,\bfz_h)\notag
    +\bigskp{\avg{\bfS}-\bigavg{\PiDG\bfS}}{\jump{\bfz_h\otimes
          \bfn}}_{\Gamma_h}
          \\&\quad+
          \tfrac{1}{2}\bigskp{\bigavg{\PiDG\bfK}-\avg{\bfK}}{\jump{\bfz_h\otimes
          \bfn}}_{\Gamma_h}\notag +
          \bigskp{\bigavg{\PiDG(q\mathbf{I}_d)}-\avg{q\mathbf{I}_d}}{\jump{\bfz_h\otimes
          \bfn}}_{\Gamma_h}\,,\notag
\end{align}
which is satisfied for all $(\bfz_h,z_h)^\top \in \Vhk\times
\Qhkc$. Here, we denoted~the~discrete~convective term by
$b_h\colon\WDG\times\WDG\times\WDG \to  \mathbb{R}$, which  is defined via
\begin{align*}
    b_h(\bfx_h,\bfy_h,\bfz_h)\coloneqq \tfrac{1}{2}\hskp{\bfz_h\otimes \bfx_h}{\Ghk\bfy_h}-\tfrac{1}{2}\hskp{\bfy_h\otimes \bfx_h}{\Ghk\bfz_h}
\end{align*}
for all $(\bfx_h,\bfy_h,\bfz_h)^\top\in \WDG\times\WDG\times\WDG$.
    
Now we have prepared everything to prove our main result Theorem \ref{thm:error}.    
\begin{proof}[Proof of Theorem \ref{thm:error}] 
  \!From our assumptions, resorting to \cite[Lemma 2.6]{kr-pnse-ldg-2},~\mbox{follows}
  that $\smash{\nabla q \in L^{p'}(\Omega)}$, which together with $\smash{(\varphi_{\vert \bfD\bfv\vert})^*(t)\leq
c\,t^{p'}}$, valid for every $t\ge 0$, yields
  $\smash{\rho_{(\varphi_{\vert \bfD\bfv\vert})^*,\Omega}(\nabla
  q)\le \norm{\nabla q}_{p'}^{p'}<\infty}$. Moreover, appealing to \cite[Lemma~6.10]{ern-book}, we
  deduce the existence of a constant $\beta>0$ such
  that~for~every~${z_h\in \Qhkco}$, it holds the~LBB~condition
    \begin{align}
    \label{thm:error.1}
        \beta\|z_h\|_{p'}\leq \sup_{\bfz_h\in \Vhk;{\|\bfz_h\|_{\nabla,p,h}\leq 1}}{( z_h,\Divhk\bfz_h)}\,.
    \end{align}
    On the other hand, due to \eqref{eq:errorprimal}, for every $\bfz_h\in \Vhk$, we have that
    \begin{align}\label{thm:error.2}
  \begin{aligned}
    \hskp{q_h-q}{\Divhk \bfz_h}
     &
     =\bighskp{\SSS(\Dhk \bfv_h) - \SSS(\bfD
      \bfv)}{\Dhk \bfz_h}
    \\&
    \quad
    + \alpha \big\langle\SSS_{\smash{\sssl}}(h^{-1} \jump{\bfv_h\otimes
        \bfn}), \jump{\bfz_h \otimes \bfn}\big\rangle_{\Gamma_h}
    \\&
    \quad
    -b_h(\bfv,\bfv,\bfz_h)+b_h(\bfv_h,\bfv_h,\bfz_h)
    \\&
    \quad
    +\bigskp{\bigavg{\PiDG\bfS}-\avg{\bfS}}{\jump{\bfz_h\otimes
          \bfn}}_{\Gamma_h}
    \\&
    \quad
    +\tfrac{1}{2}\bigskp{\avg{\bfK}-\bigavg{\PiDG\bfK}}{\jump{\bfz_h\otimes\bfn}}_{\Gamma_h}
    \\&
    \quad
    +\bigskp{\avg{q\mathbf{I}_d}-\bigavg{\PiDG(q\mathbf{I}_d)}}{\jump{\bfz_h\otimes\bfn}}_{\Gamma_h}
    \\&=I_1+\alpha\, I_2+I_3+\dots+I_6\,.
  \end{aligned}
\end{align}

So, let us next estimate $I_1,\dots, I_6$ for some arbitrary $\bfz_h\in \Vhk$ with $\|\bfz_h\|_{\nabla,p,h}\leq 1$:

(\textit{ad $I_1$}).
Using the generalized Hölder inequality \eqref{eq:gen_hoelder}, we find that
\begin{align}\label{thm:error.3}
    \begin{aligned}
        \vert I_1\vert &\leq 2\,\bignorm{\bfS(\Dhk\bfv_h)-\bfS(\bfD\bfv)}_{(\varphi_{\vert     \bfD\bfv\vert})^*}\bignorm{\Dhk\bfz_h}_{\varphi_{\vert \bfD\bfv\vert}}\\&\eqqcolon  2\,I_{1,1}\cdot I_{1,2}\,.
    \end{aligned}
\end{align}
Appealing to \eqref{eq:hammera} and \cite[Theorem 4.1]{kr-pnse-ldg-2}, we have that 
\begin{align}\label{thm:error.4}
    \begin{aligned}
    \rho_{(\varphi_{\vert \bfD\bfv\vert})^*,\Omega}\big(\bfS(\Dhk\bfv_h)-\bfS(\bfD\bfv)\big)&\leq c\,\bignorm{\bfF(\Dhk\bfv_h)-\bfF(\bfD\bfv)}_2^2
    \\&
    \leq c\,h^2\|\bfF(\bfD\bfv)\|_{1,2}^2+c\,\rho_{(\varphi_{\vert \bfD\bfv\vert})^*,\Omega}(h\,\nabla q)\,.
    \end{aligned}
\end{align}
Since, by assumption, we have that $h\leq 1$, for 
\begin{align*}
    c_0&\coloneqq \max\big\{1,c\,\|\bfF(\bfD\bfv)\|_{1,2}^2+c\, \rho_{(\varphi_{\vert \bfD\bfv\vert})^*,\Omega}(\nabla q)\big\}\ge 1\,,\\ \gamma&\coloneqq c_0^{-1}\big(c\,h^2\|\bfF(\bfD\bfv)\|_{1,2}^2+c\,\rho_{(\varphi_{\vert \bfD\bfv\vert})^*,\Omega}(h\,\nabla q)\big)\leq 1\,,
\end{align*}
Lemma \ref{lem:just_to_mighty} yields a constant $c(p)>0$, depending only on $p\in (2,\infty)$, such that
\begin{align}\label{thm:error.5}
    \begin{aligned}
    I_{1,1}\leq c_0^{\frac{1}{p'}-\frac{1}{2}}c(p)^{\frac{1}{p'}}\big(c\,h^2\|\bfF(\bfD\bfv)\|_{1,2}^2+c\,\rho_{(\varphi_{\vert \bfD\bfv\vert})^*,\Omega}(h\,\nabla q)\big)^{\frac{1}{2}}\,.
    \end{aligned}
\end{align}
Using the shift change in Lemma \ref{lem:shift-change}, that $\varphi(t)\leq c(p)\,(\delta^p+t^p)$ for all $t\ge 0$, and \eqref{eq:equi2} we find that
\begin{align*}
    \rho_{\varphi_{\vert \bfD\bfv\vert},\Omega}\big(\Dhk\bfz_h\big)&\leq c\,\rho_{\varphi,\Omega}\big(\Dhk\bfz_h\big)+c\,\rho_{\varphi,\Omega}(\bfD\bfv)\\&\leq c\,\big\|\Dhk\bfz_h\big\|_p^p+c\,\delta^p\vert \Omega\vert+c\,\rho_{\varphi,\Omega}(\bfD\bfv)
    \\&\leq c+c\,\delta^p\vert \Omega\vert+c\,\rho_{\varphi,\Omega}(\bfD\bfv)\\&\leq \max\big\{1,c+c\,\delta^p\vert \Omega\vert+c\,\rho_{\varphi,\Omega}(\bfD\bfv)\big\}\,,
\end{align*}
so that Lemma \ref{lem:just_to_mighty}, where $\lambda=1$ and $c_0=\max\{1,c+c\,\delta^p\vert \Omega\vert+c\,\rho_{\varphi,\Omega}(\bfD\bfv)\}$,~yields~that
\begin{align}\label{thm:error.7}
    I_{1,2}\leq \big(2\max\big\{1,c+c\,\delta^p\vert \Omega\vert+c\,\rho_{\varphi,\Omega}(\bfD\bfv)\big\}\big)^{\frac{1}{2}}\Delta_2(\varphi)\,.
\end{align}
Then, combining \eqref{thm:error.5} and \eqref{thm:error.7} in \eqref{thm:error.4}, we deduce that
\begin{align}\label{thm:error.I1}
    \vert I_1\vert 
  \leq c\,h 
  +c\,\big(\rho_{(\varphi_{\vert \bfD\bfv\vert})^*,\Omega}(h\,\nabla q)\big)^{\smash{\frac{1}{2}}}\,.
\end{align}

(\textit{ad $I_2$}).
Using the generalized Hölder inequality \eqref{eq:gen_hoelder}, we find that
\begin{align}\label{thm:error.8}
    \begin{aligned}
    \vert I_2\vert &\leq 2\,h^{\smash{\smash{1-\frac{1}{2}}}}\bignorm{\bfS_{\avg{\vert\Pia \Dhk\bfv_h\vert}}(h^{-1}\jump{\bfv_h\otimes\bfn})}_{(\varphi_{\smash{\avg{\vert\Pia \Dhk\bfv_h\vert}}})^*,\Gamma_h}\\&\quad\times\bignorm{h^{\smash{\smash{\frac{1}{2}-1}}}\jump{\bfz_h\otimes\bfn}}_{\varphi_{\smash{\avg{\vert\Pia \Dhk\bfv_h\vert}}},\Gamma_h}
    \\&
    \eqqcolon  2\,h^{\smash{\smash{1-\frac{1}{2}}}}\, I_{2,1}\cdot I_{2,2}\,.
    \end{aligned}
\end{align}
Appealing to \eqref{eq:hammera} and \cite[Theorem 4.1]{kr-pnse-ldg-2}, we have that 
\begin{align}\label{thm:error.9}
    \begin{aligned}
    &\rho_{(\varphi_{\smash{\avg{\vert\Pia
            \Dhk\bfv_h\vert}}})^*,\Gamma_h}\big(\bfS_{\avg{\vert\Pia
        \Dhk\bfv_h\vert}}(h^{-1}\jump{\bfv_h\otimes\bfn})\big)
    \\
    &\quad\leq c\,h^{-1}m_{\varphi_{\smash{\avg{\vert\Pia \Dhk\bfv_h\vert}}},h}(\bfv_h)
    \\
    &\quad     \leq c\,h\,\|\bfF(\bfD\bfv)\|_{1,2}^2+c\,h^{-1}\rho_{(\varphi_{\vert \bfD\bfv\vert})^*,\Omega}(h\,\nabla q)\,.
    \end{aligned}
\end{align}
Since, by assumption, we have that $h\leq 1$, for 
\begin{align*}
    c_0&\coloneqq \max\big\{1,c\,\|\bfF(\bfD\bfv)\|_{1,2}^2+c\,\rho_{(\varphi_{\vert \bfD\bfv\vert})^*,\Omega}(\nabla q)\big\}\ge 1\,,\\ \gamma&\coloneqq c_0^{-1}\big( c\,h\,\|\bfF(\bfD\bfv)\|_{1,2}^2+c\,h^{-1}\rho_{(\varphi_{\vert \bfD\bfv\vert})^*,\Omega}(h\,\nabla q)\big)\leq 1\,,
\end{align*}
Lemma \ref{lem:just_to_mighty} yields a constant $c(p)>0$, depending only on $p\in (2,\infty)$, such that
\begin{align}\label{thm:error.10}
    \begin{aligned}
    I_{2,1}\leq c_0^{\frac{1}{p'}-\frac{1}{2}}c(p)^{\frac{1}{p'}}\,\big(c\,h\,\|\bfF(\bfD\bfv)\|_{1,2}^2+c\,h^{-1}\rho_{(\varphi_{\vert \bfD\bfv\vert})^*,\Omega}(h\,\nabla q)\big)^{\frac{1}{2}}\,.
    \end{aligned}
\end{align}
Using that $\varphi_{\smash{\avg{\vert\Pia \Dhk\bfv_h\vert}}}(h^{\smash{\frac{1}{2}}}\,t)\leq c\,h\,\varphi_{\smash{\avg{\vert\Pia \Dhk\bfv_h\vert}}}(t)$ for all $t\ge 0$ and $h\in [0,1]$, the shift change in Lemma \ref{lem:shift-change}, that $\varphi(t)\leq c(p)\,(\delta^p+t^p)$~for~all~$t\ge 0$, $h\,\mathscr{H}^{d-1}(\Gamma_h)\leq c\,\vert \Omega\vert$, the discrete trace inequality \cite[(A.23)]{kr-phi-ldg}, the Orlicz-stability properties of $\Uppi_{h}^{0}$ \cite[(A.12)]{kr-phi-ldg}, and the a priori estimate \cite[Proposition~5.7]{kr-pnse-ldg-1}, we find that
\begin{align}\label{thm:error.12.0}
    \begin{aligned}
    \rho_{\varphi_{\smash{\avg{\vert\Pia
            \Dhk\bfv_h\vert}}},\Gamma_h}\big(h^{\smash{\frac{1}{2}-1}}\jump{\bfz_h\otimes\bfn}\big)
    &\leq c\, h\,\rho_{\varphi_{\smash{\avg{\vert\Pia
            \Dhk\bfv_h\vert}}},\Gamma_h}(h^{\smash{-1}}\jump{\bfz_h\otimes\bfn})
    \\
    &\leq
    c\,h\,\rho_{\varphi,\Gamma_h}(h^{-1}\jump{\bfz_h\otimes\bfn})
+c\,h\,\rho_{\varphi,\Gamma_h}\big(\avg{\vert\Pia \Dhk\bfv_h\vert}\big)\hspace*{-10mm}
    \\
    &\leq c\,h\,\|h^{-1}\jump{\bfz_h\otimes\bfn}\|_{p,\Gamma_h}^p+c\,\delta^p\,h\,\mathscr{H}^{d-1}(\Gamma_h)\\&\quad+
    c\,\rho_{\varphi,\Omega}\big(\Dhk\bfv_h\big)
    \\&\leq \max\big\{1,c+c\,\delta^p\,\vert\Omega\vert+c\,\smash{\|\bfg\|_{p'}^{p'}}\big\}\,,
    \end{aligned}
\end{align}
so that Lemma \ref{lem:just_to_mighty}, where  $\lambda=1$ and $c_0=\max\{1,c+c\,\delta^p\,\vert\Omega\vert+c\,\|\bfg\|_{p'}^{p'}\}$,~yields~that
\begin{align}\label{thm:error.12}
    I_{2,2}\leq \big(2\max\big\{1, c+c\,\delta^p\,\vert\Omega\vert + c\,\|\bfg\|_{p'}^{p'}\big\}\big)^{\smash{\frac{1}{2}}}\Delta_2(\varphi)\,.
\end{align}
Then, combining \eqref{thm:error.10} and \eqref{thm:error.12} in \eqref{thm:error.8}, we deduce that
\begin{align}\label{thm:error.13}
  \abs{I_2}\leq   c\,h 
  +c\,\big(\rho_{(\varphi_{\vert \bfD\bfv\vert})^*,\Omega}(h\,\nabla q)\big)^{\smash{\frac{1}{2}}}\,.
\end{align}

(\textit{ad $I_3$}). Introducing the notation $\bfe_h\coloneqq  \bfv_h-\bfv\in \WDG$, $I_3$ can be re-written~as
\begin{align}\label{eq:i3}
    \begin{aligned}
    I_3 &= b_h\big(\bfv,\bfv-\PiDG\bfv,\bfz_h\big)-b_h\big(\bfe_h,\PiDG\bfv,\bfz_h\big)+b_h\big(\bfv_h,\PiDG\bfe_h,\bfz_h\big)\\
    &\eqqcolon  I_{3,1}+I_{3,2}+I_{3,3}\,.
    \end{aligned}
\end{align}
So, we have to estimate $ I_{3,i}$, $i=1,2,3$:

(\textit{ad $I_{3,1}$}).  The definition of $b_h\colon\WDG\times \WDG\times\WDG\to \mathbb{R}$ 
yields: 
\begin{align}\label{eq:i31}
    \begin{aligned}
    2\,I_{3,1}=\big(\bfz_h\otimes \bfv,\Ghk(\bfv-\PiDG\bfv)\big)-\big((\bfv-\PiDG\bfv)\otimes \bfv,\Ghk\bfz_h\big)
    \eqqcolon  I_{3,1}^1+ I_{3,1}^2\,.
    \end{aligned}
\end{align}
As already observed in \cite[Proof of Lemma 2.6]{kr-pnse-ldg-2}, we have that
$\bfv\in W^{2,2}(\Omega)$, where 
\begin{align}\label{eq:w22}
 \norm{\bfv}_{2,2} \le c\,
\delta^{2-p}\, \norm{\bfF(\bfD\bfv)}_{1,2}\,.   
\end{align}
Thus, exploiting that, by the Sobolev
embedding theorem,  ${\bfv\in W^{2,2}(\Omega)\hookrightarrow
  L^\infty(\Omega)}$, the discrete Sobolev embedding theorem
(cf.~\cite[Theorem~5.3]{EP12}), the identities $\Ghk= \nabla _h -\Rhk$
and $\bfv-\PiDG\bfv=\bfv-\PiDG\bfv- \PiDG (\bfv-\PiDG\bfv)$, the
approximation properties of $\PiDG$ (cf. \cite[Corollary A.8, Lemma
A.1, Corollary A.19]{kr-phi-ldg}), and \eqref{eq:w22}, we find that
\begin{align}
    \label{eq:i311}
    \begin{aligned}
    \abs{I_{3,1}^1}&\leq \norm{\bfv}_\infty\norm{\bfz_h}_2\bignorm{\Ghk(\bfv-\PiDG\bfv)}_2\\
    &\leq c\,h\,\norm{\bfz_h}_{\nabla,p,h}\norm{\nabla^2\bfv}_2
    \\
    &\leq c\,h\,\norm{\nabla\bfF(\bfD\bfv)}_2\,.
    \end{aligned}
\end{align}
Similarly, we get, also using \eqref{eq:eqiv0} and $p>2$,  that
\begin{align}
    \label{eq:i312}
    \begin{aligned}
    \abs{I_{3,1}^2}&\leq \norm{\bfv}_\infty\bignorm{\bfv-\PiDG\bfv}_2\bignorm{\Ghk\bfz_h}_2\\
    &\leq c\,h^2\,\norm{\nabla^2\bfv}_2\norm{\bfz_h}_{\nabla,p,h}\\
    &\leq c\,\,h^2\,\norm{\nabla\bfF(\bfD\bfv)}_2\,.
    \end{aligned}
\end{align}

(\textit{ad $I_{3,2}$}). The definition of $b_h\colon\WDG\times \WDG\times\WDG\to \mathbb{R}$ 
yields:
\begin{align}
    \label{eq:i32}
    \begin{aligned}
    2\,I_{3,2}=\big(\bfz_h\otimes \bfe_h,\Ghk\PiDG\bfv\big)+\big(\PiDG\bfv\otimes \bfe_h,\Ghk\bfz_h\big)
    \eqqcolon  I_{3,2}^1+ I_{3,2}^2\,.
    \end{aligned}
\end{align}
Then, exploiting \eqref{eq:eqiv0}, the Sobolev embedding theorem
(cf.~Proposition \ref{prop:emb}), the DG-stability property of
$\PiDG$ (cf.~\cite[(A.19)]{dkrt-ldg}), the
Korn type inequality in Proposition~\ref{prop:kornii}, the estimates \eqref{eq:w22}, 
\cite[(4.50)]{kr-pnse-ldg-2}, and \cite[Theorem~4.1]{kr-pnse-ldg-2}, we find that
\begin{align}
    \label{eq:i321}
    \begin{aligned}
    \abs{I_{3,2}^1}&\leq
    \bignorm{\Ghk\PiDG\bfv}_2\norm{\bfe_h}_4\norm{\bfz_h}_4
    \\
    &\leq
    c\,\bignorm{\PiDG\bfv}_{\nabla,2,h}\norm{\bfe_h}_{\nabla,2,h}\norm{\bfz_h}_{\nabla,p,h}
    \\
    &\leq
    c\,\norm{\nabla\bfv}_{2}\big (\norm{\bfe_h}_{\bfD,2,h}+h\,\norm{\nabla ^2 \bfv}_2\big )\norm{\bfz_h}_{\nabla,p,h}
    \\
    &\leq
    c\,\big (\norm{\smash{\bfF(\bfD\bfv)
      -\bfF\big(\Dhk\bfv_h\big)}}_2+\big(m_{\phi_{\smash{\sssl}},h }
      (\bfv_h-\bfv)\big ) ^{\smash{\frac 12}}   +h\,\norm{\nabla ^2
        \bfv}_2\big )
      \hspace*{-10mm}
    \\[1mm]
    &\leq c\,\big(h\,\norm{\bfF(\bfD\bfv)}_{1,2}+\big(\rho_{(\varphi_{\vert \bfD\bfv\vert})^*,\Omega}(h\,\nabla q)\big)^{\smash{\frac{1}{2}}}\big)\,.
    \end{aligned}
\end{align}
Similarly, we find that
\begin{align}
    \label{eq:i322}
    \begin{aligned}
    \abs{I_{3,2}^2}&\leq \bignorm{\Ghk\bfz_h}_2\bignorm{\PiDG\bfv}_4\norm{\bfe_h}_4\\
    &\leq c\,\norm{\bfz_h}_{\nabla,p,h}\bignorm{\PiDG\bfv}_{\nabla,2,h}\norm{\bfe_h}_{\nabla,2,h}\\
    &\leq c\,\big(h\,\norm{\bfF(\bfD\bfv)}_{1,2}+\big(\rho_{(\varphi_{\vert \bfD\bfv\vert})^*,\Omega}(h\,\nabla q)\big)^{\smash{\frac{1}{2}}}\big)\,.
    \end{aligned}
\end{align}

(\textit{ad $I_{3,3}$}).  The definition of $b_h\colon\WDG\times \WDG\times\WDG\to \mathbb{R}$ 
yields:
\begin{align}
    \label{eq:i33}
    \begin{aligned}
    2\,I_{3,3}=\big(\bfz_h\otimes \bfv_h,\Ghk\PiDG\bfe_h\big)+\big(\PiDG\bfe_h\otimes \bfv_h,\Ghk\bfz_h\big)
    \eqqcolon  I_{3,3}^1+ I_{3,3}^2\,.
    \end{aligned}
\end{align}
Then, exploiting \eqref{eq:eqiv0}, the discrete Sobolev embedding
theorem (cf.~\cite[Theorem~5.3]{EP12}), the DG-stability properties of
$\PiDG$ (cf.~\cite[(A.18)]{dkrt-ldg}), the apriori estimate in
\cite[Proposition 5.7]{kr-pnse-ldg-1}, the
Korn type inequality in Proposition \ref{prop:kornii}, the estimates \eqref{eq:w22}, 
\cite[(4.50)]{kr-pnse-ldg-2}, and \cite[Theorem 4.1]{kr-pnse-ldg-2}, we find that
\begin{align}
    \label{eq:i331}
    \begin{aligned}
    \abs{I_{3,3}^1}&\leq \bignorm{\Ghk\PiDG\bfe_h}_2\norm{\bfz_h}_4\norm{\bfv_h}_4\\
    &\leq
    c\,\bignorm{\PiDG\bfe_h}_{\nabla,2,h}\norm{\bfz_h}_{\nabla,p,h}\norm{\bfv_h}_{\nabla,p,h}\\
    &\leq
    c\,\bignorm{\bfe_h}_{\nabla,2,h}\norm{\bfz_h}_{\nabla,p,h}\\
    &\leq c\,\big(h\,\norm{\bfF(\bfD\bfv)}_{1,2}+\big(\rho_{(\varphi_{\vert \bfD\bfv\vert})^*,\Omega}(h\,\nabla q)\big)^{\smash{\frac{1}{2}}}\big)\,.
    \end{aligned}
\end{align}
Similarly, we find that
\begin{align}
    \label{eq:i332}
    \begin{aligned}
    \abs{I_{3,3}^2}&\leq \bignorm{\Ghk\bfz_h}_2\bignorm{\PiDG\bfe_h}_4\norm{\bfv_h}_4\\
    &\leq c\,\norm{\bfz_h}_{\nabla,p,h}\norm{\bfe_h}_{\nabla,2,h}\norm{\bfv_h}_{\nabla,p,h}\\
    &\leq c\,\big(h\,\norm{\bfF(\bfD\bfv)}_{1,2}+\big(\rho_{(\varphi_{\vert \bfD\bfv\vert})^*,\Omega}(h\,\nabla q)\big)^{\smash{\frac{1}{2}}}\big)\,.
    \end{aligned}
\end{align}
Eventually, combining \eqref{eq:i3}--\eqref{eq:i332}, we conclude that
\begin{align}
  \abs{I_3}\leq c\,h 
  +c\,\big(\rho_{(\varphi_{\vert \bfD\bfv\vert})^*,\Omega}(h\,\nabla q)\big)^{\smash{\frac{1}{2}}}\,.\label{eq:i3fin}
\end{align}

(\textit{ad $I_4$}).
Using the generalized Hölder inequality \eqref{eq:gen_hoelder}, we find that
\begin{align}\label{thm:error.14}
    \begin{aligned}
    \vert I_4\vert &\leq 2\,h^{\smash{1-\frac{1}{2}}}\bignorm{\avg{\PiDG\bfS(\bfD\bfv)}-\avg{\bfS(\bfD\bfv)}}_{(\varphi_{\vert \bfD\bfv\vert})^*,\Gamma_h}\bignorm{h^{\smash{\frac{1}{2}-1}}\jump{\bfz_h\otimes\bfn}}_{\varphi_{\vert \bfD\bfv\vert},\Gamma_h}
    \\&\eqqcolon  2\, h^{\smash{1-\frac{1}{2}}}\, I_{4,1}\cdot I_{4,2}\,.
    \end{aligned}
\end{align}
Appealing to \cite[(4.43), (4.45)]{kr-pnse-ldg-2}, we have that 
\begin{align}\label{thm:error.15}
    \rho_{(\varphi_{\vert \bfD\bfv\vert})^*,\Gamma_h}\big(\avg{\PiDG\bfS(\bfD\bfv)}-\avg{\bfS(\bfD\bfv)}\big)&\leq c\,h^2\,\|\nabla \bfF(\bfD\bfv)\|_2^2\,.
\end{align}
Since, by assumption, we have that $h\leq 1$, for 
\begin{align*}
    c_0\coloneqq \max\big\{1,c\,\|\nabla\bfF(\bfD\bfv)\|_2^2\big\}\,,\quad \gamma\coloneqq c_0^{-1}c\,h^2\,\|\nabla\bfF(\bfD\bfv)\|_2^2\,,
\end{align*}
Lemma \ref{lem:just_to_mighty} yields a constant $c(p)>0$, depending only on $p\in (2,\infty)$, such that
\begin{align}\label{thm:error.16}
    I_{4,1}
  \leq c_0^{\frac{1}{p'}-\frac{1}{2}}\,c(p)\,h\,\|\nabla\bfF(\bfD\bfv)\|_2\,.
\end{align}
Using a shift change in Lemma \ref{lem:shift-change}, 
 \cite[Lemma 4.11]{kr-pnse-ldg-2}, \eqref{thm:error.12.0},
\cite[Theorem~4.1]{kr-pnse-ldg-2}, the convexity of $(\varphi_{\vert \bfD\bfv(x)\vert})^*\colon\mathbb{R}^{\ge 0}\hspace{-0.1em}\to\hspace{-0.1em} \mathbb{R}^{\ge 0}$ for a.e. $x\hspace{-0.1em}\in\hspace{-0.1em} \Omega$ to together with ${\sup_{a\ge 0}{\Delta_2((\varphi_a)^*)}\hspace{-0.1em}<\hspace{-0.1em}\infty}$ and $h\leq 1$, we find that
\begin{align}\label{thm:error.16.0}
    \begin{aligned}
    \rho_{\varphi_{\vert
        \bfD\bfv\vert},\Gamma_h}\big(h^{\smash{\frac{1}{2}-1}}\jump{\bfz_h\otimes\bfn}\big)
    &\leq  \rho_{\varphi_{\smash{\avg{\vert\Pia \Dhk\bfv_h\vert}}},\Gamma_h}\big(h^{\smash{\frac{1}{2}-1}}\jump{\bfz_h\otimes\bfn}\big)
    \\
    &\quad + \rho_{\varphi_{\vert\bfD\bfv\vert},\Gamma_h}\big(\vert\bfD\bfv\vert-
    \avg{\vert\Pia \Dhk\bfv_h\vert}\big)
    \\
    & \leq \rho_{\varphi_{\smash{\avg{\vert\Pia \Dhk\bfv_h\vert}}},\Gamma_h}\big(h^{\smash{\frac{1}{2}-1}}\jump{\bfz_h\otimes\bfn}\big)   
    \\
    &\quad +c\,h\,\norm{\nabla\bfF(\bfD\bfv)}^2_2+c\,h^{-1}\,\bignorm{\bfF(\Dhk\bfv_h)-\bfF(\bfD\bfv)}_2^2
    \\&\leq \max\big\{1,c+c\,\delta^p\,\vert \Omega\vert+c\,
    \norm{\bfg}_{p'}^{p'}+c\,h\,\|\bfF(\bfD\bfv)\|_{1,2}^2
    \\
    &\qquad\qquad\;+c\,\rho_{(\varphi_{\vert \bfD\bfv\vert})^*,\Omega}(\nabla q) \big\}\,.
    \end{aligned}
\end{align}
Thus,  Lemma \ref{lem:just_to_mighty}, where  $\gamma=1$ and $c_0=\max\{1,c+c\delta^p\,\vert \Omega\vert+c\, \norm{\bfg}_{p'}^{p'}+c\,\|\bfF(\bfD\bfv)\|_{1,2}^2+c\,\rho_{(\varphi_{\vert \bfD\bfv\vert})^*,\Omega}(\nabla q)\big\}$,~yields~that
\begin{align}\label{thm:error.17}
    \hspace*{-1mm}I_{4,2}
  &\!\leq \! \big(2\max \!\big\{1,c\!+\!c\delta^p\,\vert
    \Omega\vert\!+\!c \norm{\bfg}_{p'}^{p'}
    \!+\!c\|\bfF(\bfD\bfv)\|_{1,2}^2\!+\!c\rho_{(\varphi_{\vert
    \bfD\bfv\vert})^*,\Omega}(\nabla
    q)\big\}\big)^{\smash{\frac{1}{2}}} \Delta_2(\varphi). \hspace*{-2mm}
\end{align}
Combining \eqref{thm:error.16} and \eqref{thm:error.17}, we deduce that
\begin{align}\label{thm:error.18}
    \abs{I_4}\leq c\,h\,. 
\end{align}               

(\textit{ad $I_5$}).
Using  Hölder's inequality, we find that
\begin{align}\label{thm:error.19}
  \vert I_5\vert
  &\leq  h^{\smash{1-\frac{1}{2}}}\bignorm{\avg{\bfK}-\avg{\PiDG
    \bfK}}_{2,\Gamma_h}\bignorm{h^{\smash{\frac{1}{2}-1}}\jump{\bfz_h\otimes\bfn}}_{2,\Gamma_h}\,.
\end{align}
    Taking into account $\bfv\in L^\infty(\Omega)\cap W^{1,2}_0(\Omega)$, we
    have that $\bfK =\bfv \otimes \bfv\in W^{1,2}_0(\Omega)$, where
    $\norm{\nabla     \bfK}_{2} \le c\,
    \norm{\bfv}_{\infty}\,\delta^{2-p}\|\bfF(\bfD\bfv)\|_{2} $. Thus,
    \cite[Corollary A.19]{kr-phi-ldg} yields
\begin{align}\label{thm:error.20}
    \bignorm{\avg{\bfK}-\avg{\PiDG \bfK}}_{2,\Gamma_h}\leq
  c\,h^{\frac 12}\,\norm{\nabla \bfK}_{2}\le c\,h^{\frac 12}\,\norm{\bfF(\bfD\bfv)}_{2}\,,
\end{align}
which together with
$\bignorm{h^{\smash{\frac{1}{2}-1}}\jump{\bfz_h\otimes\bfn}}_{2,\Gamma_h}
\le c\,\abs{\Omega}^{\frac 12 -\frac 1p}
\bignorm{h^{\smash{\frac{1}{p}-1}}\jump{\bfz_h\otimes\bfn}}_{p,\Gamma_h}\le
c$ yields
\begin{align}
    \vert I_5\vert
    \leq c\,h\,. 
\end{align}

(\textit{ad $I_6$}).
Using the generalized Hölder inequality \eqref{eq:gen_hoelder}, we find that
\begin{align}
    \vert I_6\vert &\leq 2\,h^{\smash{1-\frac{1}{2}}}\bignorm{\avg{q\mathbf{I}_d}-\avg{\PiDG(q\mathbf{I}_d)}}_{(\varphi_{\vert \bfD\bfv\vert})^*,\Gamma_h}\bignorm{h^{\smash{\frac{1}{2}-1}}\jump{\bfz_h\otimes\bfn}}_{\varphi_{\vert \bfD\bfv\vert},\Gamma_h}
    \\&\eqqcolon  2\, h^{\smash{1-\frac{1}{2}}}\, I_{6,1}\cdot I_{4,2}\,.
\end{align}
Appealing to \cite[Proposition 4.9]{kr-pnse-ldg-2}, we have that 
\begin{align*}
    \rho_{(\varphi_{\vert
  \bfD\bfv\vert})^*,\Gamma_h}\big(\avg{q\mathbf{I}_d}-\avg{\PiDG(q\mathbf{I}_d)}\big)
  &\leq c\,h\,\|\nabla\bfF(\bfD\bfv)\|_{2}^2+c\,h^{-1}\,\rho_{(\varphi_{\vert \bfD\bfv\vert})^*,\Omega}(h\,\nabla q)\,.
\end{align*}
Since, by assumption, we have that $h\leq 1$, for 
\begin{align*}
    c_0&\coloneqq \max\{1,c\,\|\bfF(\bfD\bfv)\|_{1,2}^2+c\,\rho_{(\varphi_{\vert \bfD\bfv\vert})^*,\Omega}(\nabla q)\}\,,\\ \gamma&\coloneqq c_0^{-1}\big(c\,h\|\bfF(\bfD\bfv)\|_{1,2}^2+c\,h^{-1}\rho_{(\varphi_{\vert \bfD\bfv\vert})^*,\Omega}(h\,\nabla q)\big)\,,
\end{align*}
Lemma \ref{lem:just_to_mighty} yields a constant $c(p)>0$, depending only on $p\in (1,\infty)$,~such~that
\begin{align*}
    &\big\|\avg{q\mathbf{I}_d}\!-\!\avg{\PiDG(q\mathbf{I}_d)}\big\|_{(\varphi_{\vert
      \bfD\bfv\vert})^*,\Gamma_h}
\leq c_0^{\frac{1}{p'}-\frac{1}{2}} c\,\big(c\,h\,\|\nabla\bfF(\bfD\bfv)\|_{2}^2+c\,h^{-1}\rho_{(\varphi_{\vert \bfD\bfv\vert})^*,\Omega}(h\,\nabla q)\big)^{\frac{1}{2}}\,.
\end{align*}
As a result, also using \eqref{thm:error.17},
we deduce that
\begin{align}\label{thm:error.I6}
  \abs{I_6}\leq c\,h 
  +c\,\big (\rho_{(\varphi_{\vert \bfD\bfv\vert})^*,\Omega}(h\,\nabla q)\big)^{\frac{1}{2}}\,.
\end{align}

Putting it all together, for every $\bfz_h\in \Vhk$ with $\|\bfz_h\|_{\nabla,p,h}\leq 1$, we conclude that
\begin{align}
  \hskp{q_h-q}{\Divhk \bfz_h}\leq  c\,h 
  +c\,\big(\rho_{(\varphi_{\vert \bfD\bfv\vert})^*,\Omega}(h\,\nabla q)\big)^{\smash{\frac{1}{2}}}\,.
\end{align}
Therefore, for every ${z_h\in \Qhkco}$, we find that
\begin{align}\label{eq:fast}
  \begin{aligned}
    \|q_h-q\|_{p'}&\leq \|q_h-z_h\|_{p'}+\|z_h-q\|_{p'} \\&\leq
    \sup_{\bfz_h\in \Vhk; {\norm{\bfz_h}_{\nabla,p,h}\leq
        1}}{(q_h-z_h,\Divhk\bfz_h)}+\|z_h-q\|_{p'} \\&\leq
    \sup_{\bfz_h\in \Vhk; {\norm{\bfz_h}_{\nabla,p,h}\leq
        1}}{(q_h-q,\Divhk\bfz_h)}+c\,\|z_h-q\|_{p'} \\&\leq
    c\,h 
    +c\,\big(\rho_{(\varphi_{\vert \bfD\bfv\vert})^*,\Omega}(h\,\nabla
    q)\big)^{\smash{\frac{1}{2}}}+c\,\|z_h-q\|_{p'}\,.
  \end{aligned}
\end{align}
Next,  denote by $\Pi_h^{\smash{Q},k}\colon L^{p'}(\Omega)\hspace{-0.1em}\to\hspace{-0.1em} \Qhkc$, the
Clem\'ent quasi-interpolation~operator~(cf.~\cite{BF1991}), for which we have that
\begin{align}\label{thm:error.5a}
  \|q-\Pi_h^{\smash{Q},k} q\|_{p'}\leq
  c\,h \,\norm{\nabla q}_{p'}\,.
\end{align}
Noting that the infimum of $\|z_h-q\|_{p'}$ over $\Qhkc$ and $\Qhkco$
are comparable for $q \in \Qo$, the assertion of Theorem
\ref{thm:error} follows from \eqref{eq:fast} and \eqref{thm:error.5a}, if we choose $\bfz_h=\Pi_h^{\smash{Q},k} q$.
\end{proof}

\begin{proof}[Proof of Corollary \ref{cor:error}]
    Using that $\varphi^*(h\,t)\hspace{-0.1em} \le\hspace{-0.1em}
    c\, h^{p'} \varphi^*(t)$ for all $t\hspace{-0.1em}\ge\hspace{-0.1em} 0$, valid~for~${p\hspace{-0.1em}>\hspace{-0.1em}2}$ (cf.~\cite{bdr-phi-stokes}),  we deduce from Theorem \ref{thm:error} that
    \begin{align*}
      \|q_h-q\|_{p'}\le c\,h 
      +c\,h^{\frac{p'}2} \big (\rho_{\phi^*,\Omega}(\nabla q)\big
      )^{\frac 12}\,. 
    \end{align*}
    If, in addition, $\bfg\in L^2(\Omega)$, then \cite[Lemma
    2.6]{kr-pnse-ldg-2} implies $(\delta
    +\abs{\bfD\bfv})^{2-p}\abs{\nabla q}^2 \in  L^1(\Omega)$. Moreover, it holds
  $(\varphi_a)^*(h\,t)\hspace{-0.15em} \sim \hspace{-0.15em}\big( (\delta+a)^{p-1}\hspace{-0.15em}
  +h\,t\big )^{\smash{p'-2}}\, h^2\, t^2 \hspace{-0.15em}
  \le\hspace{-0.15em}(\delta+a)^{2-p}\, h^2\, t^2 $ for all
  $t,a\hspace{-0.15em}\ge\hspace{-0.15em} 0$, since $p>2$. Therefore, 
   from
  Theorem \ref{thm:error},    we deduce that
    \begin{align*}
       \|q_h-q\|_{p'}\le c\, h 
      +c\, h\, \norm {(\delta+\abs{\bfD\bfv})^{\frac{2-p}2}\nabla
      q}_2\,, 
    \end{align*}
    which is the assertion.
\end{proof}

The same method of proof of course also works for the $p$-Stokes
problem, i.e., we neglect the convective term in
\eqref{eq:p-navier-stokes}, Problem (P), Problem (P$_h$), Problem (Q),
and Problem (Q$_h$). Note that the dependence on $\delta^{-1}>0$ comes
solely from the convective term. Thus, we obtain for the $p$-Stokes problem a better dependence on the constants. 
\begin{theorem}
  \label{thm:error_stokes}
  Let $\SSS$ satisfy Assumption~\ref{assum:extra_stress} with
  $p\in(2,\infty)$ and $\delta\ge 0$,~let~$k\in \mathbb{N}$, and let
  $\bfg\in L^{p'}(\Omega)$. Moreover, let
  $(\bfv,q)^\top \in \Vo(0)\times \Qo$ be a solution of
  Problem~(Q) without the convective term (cf.~\eqref{eq:q1}, \eqref{eq:q2}) with
  $\bfF(\bfD \bfv) \in W^{1,2}(\Omega)$ 
  and let $(\bfv_h,q_h)^\top \in \Vhk(0)\times \Qhkco$ be a solution
  of Problem (Q$_h$) without the terms coming from the convective term (cf.~\eqref{eq:primal1}) for $\alpha>0$. 
  Then, there exists a constant $c>0$, depending only on the characteristics of
  $\SSS$, $\norm{\bfF(\bfD\bfv)}_{1,2}$, $\norm{\nabla q}_{p'}$,
  $\norm{\bfg}_{p'}$, 
  $\delta^p\abs{\Omega}$,~${\omega_0}$,~${\alpha^{-1}}$,~and~$k$, such that
  \begin{align*}
    \smash{\|q_h-q\|_{p'}\le c\,h 
    +c\, \big(\rho_{(\phi_{\vert\bfD\bfv\vert})^*,\Omega}(h\,\nabla q)\big)^{\frac{1}{2}}}\,.
  \end{align*}
\end{theorem}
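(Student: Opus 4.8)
The plan is to reproduce the proof of Theorem~\ref{thm:error} essentially verbatim, deleting every contribution that originates from the convective term. First I would run the integration-by-parts computation leading from \eqref{eq:cont} to \eqref{eq:errorprimal}, but now with the tensor $\bfK=\bfv\otimes\bfv$ and the term $\tfrac12\bfL\bfv$ removed (they simply do not occur in the convection-free Problem~(Q) and Problem~(Q$_h$)); subtracting the convection-free Problem~(Q$_h$) from the resulting identity and using \eqref{eq:div-dg} yields, for every $\bfz_h\in\Vhk$, the identity
\[
\hskp{q_h-q}{\Divhk\bfz_h}=I_1+\alpha\,I_2+I_4+I_6\,,
\]
where $I_1,I_2,I_4,I_6$ are \emph{exactly} the terms with the same names in \eqref{thm:error.2}, while the discrete convective term $I_3$ and the $\bfK$-jump term $I_5$ are absent.

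Next I would estimate $I_1$, $\alpha\,I_2$, $I_4$ and $I_6$ one by one, literally as in the proof of Theorem~\ref{thm:error}. The only ingredients used there for these four terms are: the generalized Hölder inequality \eqref{eq:gen_hoelder}; Proposition~\ref{lem:hammer} together with Remark~\ref{rem:sa} to turn $\SSS$-differences into $\bfF$-differences and shifted modulars; the modular velocity-error estimate \cite[Theorem~4.1]{kr-pnse-ldg-2} and the face-projection/trace estimates of \cite{kr-pnse-ldg-2}, which for the $p$-Stokes system are available \emph{without} any smallness hypothesis; the change of shift, Lemma~\ref{lem:shift-change}, to pass from $\varphi_{\abs{\bfD\bfv}}$ to $\varphi$ and from $\varphi_{\avg{\abs{\Pia\Dhk\bfv_h}}}$ to $\varphi_{\abs{\bfD\bfv}}$; the bound $\varphi(t)\le c(p)(\delta^p+t^p)$ and the a~priori estimate $\rho_{\varphi,\Omega}(\Dhk\bfv_h)\le c\,(1+\delta^p\abs{\Omega}+\norm{\bfg}_{p'}^{p'})$ from \cite{kr-pnse-ldg-1}; and Lemma~\ref{lem:just_to_mighty} to convert the modular bounds into Luxembourg-norm bounds. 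This reproduces \eqref{thm:error.I1}, \eqref{thm:error.13}, \eqref{thm:error.18} and \eqref{thm:error.I6}, hence
\[
\abs{I_1}+\alpha\,\abs{I_2}+\abs{I_4}+\abs{I_6}\le c\,h+c\,\big(\rho_{(\varphi_{\abs{\bfD\bfv}})^*,\Omega}(h\,\nabla q)\big)^{1/2}
\]
uniformly over $\bfz_h\in\Vhk$ with $\norm{\bfz_h}_{\nabla,p,h}\le1$.

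Finally I would close the argument exactly as in Theorem~\ref{thm:error}: the discrete LBB condition \eqref{thm:error.1} for $\Qhkco$ is a property of the discrete spaces only and hence unchanged, and combined with $\hskp{z_h\mathbf{I}_d}{\Dhk\bfz_h}=\hskp{z_h}{\Divhk\bfz_h}$ it gives, for every $z_h\in\Qhkco$, $\beta\,\norm{q_h-z_h}_{p'}\le\sup_{\bfz_h\in\Vhk,\,\norm{\bfz_h}_{\nabla,p,h}\le1}\hskp{q_h-q}{\Divhk\bfz_h}+c\,\norm{z_h-q}_{p'}$; choosing $z_h$ to be the Clément quasi-interpolant of $q$, invoking \eqref{thm:error.5a}, and using that the best approximation of $q\in\Qo$ in $\Qhkc$ and in $\Qhkco$ are comparable, one arrives at the asserted bound.

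The point worth emphasizing --- and essentially the only thing that genuinely has to be checked rather than copied --- is the constant dependency. In the proof of Theorem~\ref{thm:error}, the factors $\delta^{-1}$ and the smallness constant $c_0$ enter \emph{only} through the dropped terms $I_3$ and $I_5$: through the regularity $\norm{\bfv}_{2,2}\le c\,\delta^{2-p}\norm{\bfF(\bfD\bfv)}_{1,2}$ (cf.~\eqref{eq:w22}), through $\norm{\nabla(\bfv\otimes\bfv)}_2\le c\,\norm{\bfv}_\infty\,\delta^{2-p}\norm{\bfF(\bfD\bfv)}_2$, and through the absorption of the quadratic convective contribution, which is where $\norm{\nabla\bfv}_2\le c_0$ is used. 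Since $I_3$ and $I_5$ disappear here, so do these dependencies, and the constant in the $p$-Stokes estimate depends only on the characteristics of $\SSS$, $\norm{\bfF(\bfD\bfv)}_{1,2}$, $\norm{\nabla q}_{p'}$, $\norm{\bfg}_{p'}$, $\delta^p\abs{\Omega}$, $\omega_0$, $\alpha^{-1}$ and $k$, with no smallness condition and no restriction $\delta>0$. The main (minor) obstacle is thus purely bookkeeping: verifying that each surviving estimate borrowed from \cite{kr-pnse-ldg-2,kr-pnse-ldg-1} is indeed stated, or proved by the same argument, for the $p$-Stokes system unconditionally.
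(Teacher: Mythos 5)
Your proposal is correct and follows essentially the same route as the paper: the paper's proof likewise reduces \eqref{thm:error.2} to $I_1+\alpha I_2+I_4+I_6$, reuses the estimates \eqref{thm:error.I1}, \eqref{thm:error.13}, \eqref{thm:error.18}, \eqref{thm:error.I6} verbatim, and concludes via the LBB condition and Cl\'ement interpolation exactly as in Theorem~\ref{thm:error}. Your observation that the $\delta^{-1}$ and $c_0$ dependencies enter only through the dropped terms $I_3$ and $I_5$ matches the paper's own remark preceding the theorem.
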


\begin{corollary}\label{cor:error_stokes}
  Let the assumptions of Theorem \ref{thm:error_stokes} be satisfied. Then,~it~holds
  \begin{align}\label{eq:p'}
    \|q_h-q\|_{p'}\le c\, h^{\frac{p'}{2}} 
  \end{align}
  with a constant $c>0$ depending only on the characteristics of
  $\SSS$, $\norm{\bfF(\bfD\bfv)}_{1,2}$, $\norm{\nabla q}_{p'}$,
  $\norm{\bfg}_{p'}$, 
  $\delta^p\abs{\Omega}$,  $\omega_0$,
  $\alpha^{-1}$, and $k$.  If, in
  addition,~${\bfg\in L^2(\Omega)}$, then
  \begin{align}\label{eq:p'a}
    \|q_h-q\|_{p'}\leq c\, h
  \end{align}
  with a constant $c>0$ depending only on the characteristics of
$\SSS$, $\norm{\bfF(\bfD\bfv)}_{1,2}$, 
  $\norm{\bfg}_{p'}$,  $\norm{(\delta+\vert
    \bfD\bfv\vert)^{\smash{\frac{2-p}{2}}}\nabla q}_2$, 
  $\delta^p\abs{\Omega}$, 
  $\omega_0$, 
  $\alpha^{-1}$, and $k$.
\end{corollary}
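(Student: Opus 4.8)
The plan is to deduce Corollary~\ref{cor:error_stokes} from Theorem~\ref{thm:error_stokes} in precisely the same way that Corollary~\ref{cor:error} is deduced from Theorem~\ref{thm:error}: I would start from the estimate $\norm{q_h-q}_{p'}\le c\,h+c\,\big(\rho_{(\varphi_{\vert\bfD\bfv\vert})^*,\Omega}(h\,\nabla q)\big)^{1/2}$ furnished by Theorem~\ref{thm:error_stokes} and convert the modular $\rho_{(\varphi_{\vert\bfD\bfv\vert})^*,\Omega}(h\,\nabla q)$ on its right-hand side into an explicit power of $h$. Since omitting the convective term touches neither the N-function scaling inequalities nor the pressure-regularity results invoked below, the argument transfers without change, the constants being inherited verbatim from Theorem~\ref{thm:error_stokes} --- in particular with no dependence on $\delta^{-1}$, as the convective term is absent.

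For the first estimate~\eqref{eq:p'}, I would first note that $\varphi_a\ge\varphi$ for every $a\ge 0$, because $\varphi_a'(t)=(\delta+a+t)^{p-2}t\ge(\delta+t)^{p-2}t=\varphi'(t)$ when $p\ge 2$; hence $(\varphi_a)^*\le\varphi^*$ pointwise, so that $\rho_{(\varphi_{\vert\bfD\bfv\vert})^*,\Omega}(h\,\nabla q)\le\rho_{\varphi^*,\Omega}(h\,\nabla q)$. Then, invoking $\varphi^*(h\,t)\le c\,h^{p'}\varphi^*(t)$ for $h\le 1$, $t\ge 0$, which is valid since $p>2$ (cf.~\cite{bdr-phi-stokes}), together with $\rho_{\varphi^*,\Omega}(\nabla q)\le c\,\norm{\nabla q}_{p'}^{p'}<\infty$ --- a consequence of $\nabla q\in L^{p'}(\Omega)$ (equivalently $\bfg\in L^{p'}(\Omega)$, by the $p$-Stokes analogue of \cite[Lemma~2.6]{kr-pnse-ldg-2}) and $\varphi^*(t)\le c\,t^{p'}$ --- Theorem~\ref{thm:error_stokes} would give $\norm{q_h-q}_{p'}\le c\,h+c\,h^{\frac{p'}{2}}$; since $\frac{p'}{2}<1$ and $h\le 1$, the term $c\,h$ is dominated by $c\,h^{\frac{p'}{2}}$, which is~\eqref{eq:p'}.

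For the refined estimate~\eqref{eq:p'a}, I would use the $p$-Stokes analogue of \cite[Lemma~2.6]{kr-pnse-ldg-2} to obtain, from $\bfg\in L^2(\Omega)$, that $(\delta+\vert\bfD\bfv\vert)^{2-p}\vert\nabla q\vert^2\in L^1(\Omega)$, and then exploit the pointwise bound $(\varphi_a)^*(h\,t)\sim\big((\delta+a)^{p-1}+h\,t\big)^{p'-2}h^2\,t^2\le(\delta+a)^{2-p}\,h^2\,t^2$ for all $a,t\ge 0$ --- which holds because $p>2$, so that $p'-2<0$ and $(p-1)(p'-2)=2-p$. This turns the modular into $\rho_{(\varphi_{\vert\bfD\bfv\vert})^*,\Omega}(h\,\nabla q)\le c\,h^2\,\norm{(\delta+\vert\bfD\bfv\vert)^{\frac{2-p}{2}}\nabla q}_2^2$, and feeding it back into Theorem~\ref{thm:error_stokes} produces $\norm{q_h-q}_{p'}\le c\,h$; the precise lists of constants follow exactly as in Corollary~\ref{cor:error}.

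The computation is essentially mechanical, so I do not expect a genuine obstacle; the one point requiring care is checking that \cite[Lemma~2.6]{kr-pnse-ldg-2} --- which provides both the equivalence $\bfg\in L^{p'}(\Omega)\Leftrightarrow\nabla q\in L^{p'}(\Omega)$ and the finer conclusion $(\delta+\vert\bfD\bfv\vert)^{2-p}\vert\nabla q\vert^2\in L^1(\Omega)$ under $\bfg\in L^2(\Omega)$ --- indeed carries over to the $p$-Stokes problem. This is the case, because the convective term is the sole source of the $W^{2,2}$-regularity of $\bfv$ and hence of the dependence on $\delta^{-1}$ in that lemma; dropping it only simplifies the argument, which is exactly why Theorem~\ref{thm:error_stokes} and Corollary~\ref{cor:error_stokes} feature a cleaner set of constants than their $p$-Navier--Stokes counterparts.
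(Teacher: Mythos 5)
Your proposal is correct and follows essentially the same route as the paper, which disposes of Corollary~\ref{cor:error_stokes} by repeating the argument of Corollary~\ref{cor:error} with Theorem~\ref{thm:error_stokes} in place of Theorem~\ref{thm:error}: the scaling $\varphi^*(h\,t)\le c\,h^{p'}\varphi^*(t)$ for \eqref{eq:p'} and the pointwise bound $(\varphi_a)^*(h\,t)\le (\delta+a)^{2-p}h^2t^2$ for \eqref{eq:p'a}. Your explicit justification of $(\varphi_{\vert\bfD\bfv\vert})^*\le\varphi^*$ via $\varphi_a\ge\varphi$ for $p\ge 2$, and your remark on why the regularity transfer from $\bfg$ to $\nabla q$ carries over to the $p$-Stokes case, are details the paper leaves implicit but are both accurate.
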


  \begin{remark}\label{rem:pse}
    {\rm
      It seems that the only other result for DG methods proving for the $p$-Stokes
      problem ($p \neq 2$) an error estimate
      for the pressure is \cite{BCPH20}. There it is proved that $ \|q_h-q\|_{p'}\leq c\, h^{\frac {k+1}{p-1}} $ if
      $\bfv \in W^{k+2,p}(\Omega)$, $q  \in W^{k+1,p'}(\Omega)$,
      $k\in \setN$, which requires at least one order higher regularity
      compared to our results. In the special case that the extra
      stress tensor has $(2,\delta)$-structure, i.e., it is non-linear
      and possesses linear growth, it is proved in
      \cite{bustinza-fluid,CHSW13,GS15} that the pressure has linear
      order of convergence, which agrees with our results. The results in Corollary \ref{cor:error}
      can also be found in the context of FE methods. In particular,  estimate
      \eqref{eq:p'} is proved in \cite{BL1994b,bdr-phi-stokes,Hi13a}, while 
      estimate \eqref{eq:p'a} is proved in \cite{San1993}. Even for FE
    methods there are no theoretical results proving the experimentally
    observed convergence rates in the case $p>2$ (cf.~Section~\ref{sec:experiments}).
}
  \end{remark}

\begin{proof}[Proof of Theorem \ref{thm:error_stokes}]
  We proceed analogously to the proof of Theorem \ref{thm:error}. In
  view of the absense of the convective term, the equality
  \eqref{thm:error.2} now reads
  \begin{align}
    \label{eq:2}
  \begin{aligned}
    \hskp{q_h-q}{\Divhk \bfz_h}
    &=I_1+\alpha\, I_2+I_4+I_6\,,
  \end{aligned}
  \end{align}
  where $I_i$, $i=1,2,4,6$ are defined in \eqref{thm:error.2}. Then,
  resorting~in~\eqref{eq:2} to \eqref{thm:error.I1},
  \eqref{thm:error.13}, \eqref{thm:error.18}, \eqref{thm:error.I6}, we
  conclude that for every $\bfz_h\in \Vhk$ with
  $\|\bfz_h\|_{\nabla,p,h}\leq 1$, we have that
\begin{align}\label{eq:fast1}
  \hskp{q_h-q}{\Divhk \bfz_h}\leq  c\,h 
  +c\,\big(\rho_{(\varphi_{\vert \bfD\bfv\vert})^*,\Omega}(h\,\nabla q)\big)^{\smash{\frac{1}{2}}}
\end{align}
  with a constant $c\hspace{-0.1em}>\hspace{-0.1em}0$ depending only on the characteristics of
 $\SSS$, $\norm{\bfF(\bfD\bfv)}_{1,2}$, $\norm{\nabla q}_{p'}$,
  $\norm{\bfg}_{p'}$, 
  $\delta^p\abs{\Omega}$, $\omega_0$, 
  $\alpha^{-1}$, and $k$. Having at our disposal
  \eqref{eq:fast1}, we conclude the proof as in the proof of Theorem \ref{thm:error}.
\end{proof}

\begin{proof}[Proof of Corollary \ref{cor:error_stokes}]
    We follow the arguments in the proof of Corollary \ref{cor:error}, but now resort to Theorem \ref{thm:error_stokes}.
\end{proof}

\section{Numerical experiments}\label{sec:experiments}

In this section, we apply the LDG scheme \eqref{eq:DG} (or \eqref{eq:primal1} and  \eqref{eq:primal2}) to solve
numerically the system~\eqref{eq:p-navier-stokes}~with $\SSS\colon\mathbb{R}^{d\times d}\to\mathbb{R}^{d\times d}$, for every $\bfA\in\mathbb{R}^{d\times d}$ defined via  ${\SSS(\bfA) \coloneqq (\delta+\vert \bfA^{\textup{sym}}\vert)^{p-2}\bfA^{\textup{sym}}}$,
where $\delta\coloneqq 1\textrm{e}{-}4$ and $ p>2$.
We approximate the discrete solution~${\bfv_h\in V^k_h}$ of the
non-linear problem~\eqref{eq:DG}~by~deploying the Newton solver from \mbox{\textsf{PETSc}} (version 3.17.3), cf.~\cite{LW10}, with an absolute tolerance~of $\tau_{abs}\!=\! 1\textrm{e}{-}8$ and a relative tolerance of $\tau_{rel}\!=\!1\textrm{e}{-}10$. The linear system emerging in each Newton step is solved using a sparse direct solver from \textsf{MUMPS} (version~5.5.0),~cf.~\cite{mumps}. For the numerical flux \eqref{def:flux-S}, we choose the fixed parameter $\alpha=2.5$. This choice is in accordance with the choice in \mbox{\cite[Table~1]{dkrt-ldg}}. In the implementation, the uniqueness of the pressure is enforced via a zero mean condition.

All experiments were carried out using the finite element software package~\mbox{\textsf{FEniCS}} (version 2019.1.0), cf.~\cite{LW10}. 

For \hspace{-0.1mm}our \hspace{-0.1mm}numerical \hspace{-0.1mm}experiments, \hspace{-0.1mm}we \hspace{-0.1mm}choose \hspace{-0.1mm}$\Omega\!=\! (-1,1)^2$ \hspace{-0.1mm}and \hspace{-0.1mm}linear~\hspace{-0.1mm}elements,~\hspace{-0.1mm}i.e.,~\hspace{-0.1mm}${k\!=\! 1}$. We choose $\smash{\bfg\in L^{p'}(\Omega)}$ and boundary data $\bfv_0\in W^{\smash{1,1-\frac{1}{p}}}(\partial\Omega)$\footnote{The exact solution is not zero on the boundary of the computational domain. However, the error is clearly concentrated around the singularity and, thus, this small inconsistency with the setup of the theory does not have any influence on the results of this paper. In particular, note that Part I of the paper (cf.~\cite{kr-pnse-ldg-1})
already established at least the weak convergence of the method also for the fully non-homogeneous case.\vspace{-7.5mm}}  such that $\bfv\in W^{1,p}(\Omega)$ and $q \in \Qo$, for every $x\coloneqq (x_1,x_2)^\top\in \Omega$ defined by
\begin{align}
	\bfv(x)\coloneqq \vert x\vert^{\beta} (x_2,-x_1)^\top\,, \qquad q(x)\coloneqq \eta\,(\vert x\vert^{\gamma}-\langle\,\vert \!\cdot\!\vert^{\gamma}\,\rangle_\Omega)
\end{align}
are a solutions of  \eqref{eq:p-navier-stokes}. Here, we choose $\beta=1\textrm{e}{-}2$, which implies ${\bfF(\bfD\bfv)\in W^{1,2}(\Omega)}$. Concerning the pressure regularity, we consider two cases: Namely, 
we choose either $\gamma= 1-{\frac{2}{p'}}+1\textrm{e}{-}4$ and $\eta=25$,~which just yields ${q \in
  W^{1,p'}(\Omega)}$ (case 1),~or we choose $\gamma= \alpha\frac{p-2}{2}+1\textrm{e}{-}4$ and $\eta=1\textrm{e}{+}3$, which just yields $(\delta+\abs{\bfD\bfv})^{\frac{2-p}{2}}\nabla q \in
  L^2(\Omega)$~(case~2). Thus,~for $\gamma= 1-{\frac{2}{p'}}+1\textrm{e}{-}4$ and $\eta=25$ (case 1), we can expect the
convergence rate $\smash{\frac{p'}{2}}$, while for $\gamma\hspace{-0.1em}= \hspace{-0.1em}\alpha\frac{p-2}{2}+1\textrm{e}{-}4$ and $\eta\hspace{-0.1em}=\hspace{-0.1em}1\textrm{e}{+}3$ (case 2), we can expect the convergence~rate~$1$. (cf.~Corollary~\ref{cor:error}).%
 
We \hspace{-0.1mm}construct \hspace{-0.1mm}a \hspace{-0.1mm}initial \hspace{-0.1mm}triangulation \hspace{-0.1mm}$\mathcal
T_{h_0}$, \hspace{-0.1mm}where \hspace{-0.1mm}$h_0\hspace{-0.2em}=\hspace{-0.2em}\smash{\frac{1}{\sqrt{2}}}$, \hspace{-0.1mm}by \hspace{-0.1mm}subdividing~\hspace{-0.1mm}a~\hspace{-0.1mm}\mbox{rectangular} cartesian grid~into regular triangles with different orientations.  Finer triangulations~$\mathcal T_{h_i}$, $i=1,\dots,5$, where $h_{i+1}=\frac{h_i}{2}$ for all $i=1,\dots,5$, are 
obtained by
regular subdivision of the previous grid: Each \mbox{triangle} is subdivided
into four equal triangles by connecting the midpoints of the edges, i.e., applying the red-refinement rule, cf. \cite[Definition~4.8~(i)]{Ba16}.

Then, for the resulting series of triangulations $\mathcal T_{h_i}$, $i\!=\!1,\dots,5$, we apply~the~above Newton scheme to compute the corresponding numerical solutions $(\bfv_i,\bfL_i,\bfS_i)^\top\coloneqq \smash{(\bfv_{h_i},\bfL_{h_i},\bfS_{h_i})^\top\in V_{h_i}^k\times X_{h_i}^k\times X_{h_i}^k}$, $i=1,\dots,5$, 
and the error quantities
\begin{align*}
		e_{q,i}&\coloneqq \|q_i-q\|_{p'}\,,
	\quad i=1,\dots,5\,.
\end{align*}
As estimation of the convergence rates,  the experimental order of convergence~(EOC)
\begin{align*}
	\texttt{EOC}_i(e_{q,i})\coloneqq \frac{\log(e_{q,i}/e_{q,i-1})}{\log(h_i/h_{i-1})}\,, \quad i=1,\dots,5\,,
\end{align*}
is recorded.  
For different values of $p\in \{2.25, 2.5, 2.75, 3, 3.25, 3.5\}$ and a
series of triangulations~$\mathcal{T}_{h_i}$, $i = 1,\dots,5$,
obtained by regular, global refinement as described above, the EOC is
computed and presented
in 
Table~\ref{tab4}. In it, we observe for  case 1 a
  convergence~ratio~of~about $\texttt{EOC}_i(e_{q,i})\approx 1$,
  $i=1,\dots, 5$, and in  case 2  a convergence~ratio~of~about
$\texttt{EOC}_i(e_{q,i})\approx \smash{\frac{2}{p'}}$, $i=1,\dots,
5$. Both are higher than the proved~convergence rates \eqref{eq:p'1},
\eqref{eq:p'a1} in Corollary~\ref{cor:error}. The same
  convergence rate as in case 1 for the $p$-Stokes problem is observed in the analogous numerical
  experiment in \cite{bdr-phi-stokes} in the context of FE methods. 
This indicates that the error estimates in Corollary~\ref{cor:error}
and Corollary~\ref{cor:error_stokes} are yet sub-optimal and it
might, therefore, be possible to improve the estimates
  \eqref{eq:p'1}, \eqref{eq:p'} to $\smash{\|q_i-q\|_{p'}\leq c\,h}$
  and the estimates \eqref{eq:p'a1}, \eqref{eq:p'a} to
  $\smash{\|q_i-q\|_{p'}\leq c\,h^{2/p'}}$. However, without
  additional regularity assumptions, such a result seems to be out of
  reach at the present time due to the disbalance of the shifts in the
  various terms.

\begin{table}[H]
    \setlength\tabcolsep{1.9pt}
	\centering
	\begin{tabular}{c |c|c|c|c|c|c|c|c|c|c|c|c|} \cmidrule(){1-13}
\multicolumn{1}{|c||}{\cellcolor{lightgray}$\gamma$}	
	& \multicolumn{6}{c||}{\cellcolor{lightgray}case 1}   & \multicolumn{6}{c|}{\cellcolor{lightgray}case 2}\\ 
			\hline 
		   
		    \multicolumn{1}{|c||}{\cellcolor{lightgray}\diagbox[height=1.1\line,width=0.11\dimexpr\linewidth]{\vspace{-0.6mm}$i$}{\\[-5mm] $p$}}
		    & \cellcolor{lightgray}2.25 & \cellcolor{lightgray}2.5  & \cellcolor{lightgray}2.75  &  \cellcolor{lightgray}3.0 & \cellcolor{lightgray}3.25  & \multicolumn{1}{c||}{\cellcolor{lightgray}3.5} &  \multicolumn{1}{c|}{\cellcolor{lightgray}2.25}   & \cellcolor{lightgray}2.5  & \cellcolor{lightgray}2.75  & \cellcolor{lightgray}3.0  & \cellcolor{lightgray}3.25 &   \cellcolor{lightgray}3.5 \\ \hline\hline
			\multicolumn{1}{|c||}{\cellcolor{lightgray}$1$}                		& 0.988 & 0.986 & 0.984 & 0.983 & 0.983 & \multicolumn{1}{c||}{0.982} & \multicolumn{1}{c|}{1.096} & 1.175 & 1.237 & 1.285 & 1.324 & 1.357 \\ \hline
			\multicolumn{1}{|c||}{\cellcolor{lightgray}$2$}                  	& 0.997 & 0.995 & 0.994 & 0.993 & 0.992 & \multicolumn{1}{c||}{0.991} & \multicolumn{1}{c|}{1.107} & 1.191 & 1.258 & 1.312 & 1.356 & 1.392 \\ \hline
			\multicolumn{1}{|c||}{\cellcolor{lightgray}$3$}                     & 0.999 & 0.999 & 0.998 & 0.997 & 0.997 & \multicolumn{1}{c||}{0.996} & \multicolumn{1}{c|}{1.111} & 1.198 & 1.267 & 1.323 & 1.370 & 1.410 \\ \hline
			\multicolumn{1}{|c||}{\cellcolor{lightgray}$4$}               		& 1.000 & 1.000 & 0.999 & 0.999 & 0.999 & \multicolumn{1}{c||}{0.998} & \multicolumn{1}{c|}{1.112} & 1.201 & 1.272 & 1.322 & 1.364 & 1.403 \\ \hline
			\multicolumn{1}{|c||}{\cellcolor{lightgray}$5$}               		& 1.000 & 1.000 & 1.000 & 0.999 & 0.998 & \multicolumn{1}{c||}{0.998} & \multicolumn{1}{c|}{1.112} & 1.202 & 1.277 & 1.324 & 1.323 & 1.334 \\ \hline\hline
			\multicolumn{1}{|c||}{\cellcolor{lightgray}\small expected}         & 0.900 & 0.833 & 0.786 & 0.750 & 0.722 & \multicolumn{1}{c||}{0.700} & \multicolumn{1}{c|}{1.000} & 1.000 & 1.000 & 1.000 & 1.000 & 1.000 \\ \hline
	\end{tabular}\vspace{-2mm}
	\caption{Experimental order of convergence: $\texttt{EOC}_i(e_{q,i})$,~${i=1,\dots,5}$.}
	\label{tab4}
\end{table}\vspace{-1cm}

\end{document}